\newtheorem{thm}{Theorem}[section]
\newtheorem{cor}[thm]{Corollary}
\newtheorem{lem}[thm]{Lemma}
\newtheorem{prop}[thm]{Proposition}
\newtheorem{other}{\bf Theorem}
\newtheorem{otherl}{\bf Lemma}
\theoremstyle{definition}
\theoremstyle{remark}
\newtheorem{rem}[thm]{Remark}
\numberwithin{equation}{section}
\newcommand{\eps}{\varepsilon}
\newcommand{\bn}{{\mathbb B_N}}
\newcommand{\sn}{{\mathbb S_N}}
\newcommand{\cn}{\mathbb C^N}
\newcommand\dsi{\,d\sigma}
\begin{document}

\title[ Besov-Lipschitz and mean Besov-Lipschitz spaces ]
{ Besov-Lipschitz and mean Besov-Lipschitz spaces of holomorphic functions on the unit ball}%


\author{Miroljub Jevti\' c}
\address{Matemati\v cki Fakultet, PP. 550, 11000 Belgrade, Serbia}
\email{jevtic@matf.bg.ac.yu}

\thanks{The authors are supported by MN Serbia, Project  ON144010}%
\subjclass[2000]{32A35, 32A36, 32A37}%
\keywords{Besov-Lipschitz spaces, mixed-norm Bergman spaces, radial derivatives}%


\author{Miroslav Pavlovi\'c}%
\address{Faculty of Mathematics, Studentski trg 16, 11001 Belgrade, p.p. 550, Serbia}%
\email{pavlovic@matf.bg.ac.rs}%


\begin{abstract}
We give several characterizations of holomorphic mean
Besov-Lipschitz space on the unit ball in $\cn $ and appropriate
Besov-Lipschitz space and prove the equivalences between them.
Equivalent norms on the mean Besov-Lipschitz space involve
different types of $L^p$-moduli of continuity, while in
characterizations of Besov-Lipschitz space we use not only the
radial derivative but also the gradient and the tangential
derivatives. The characterization in terms of the best
approximation by polynomials is also given.
\end{abstract}

\maketitle
\section{Introduction and main results}
Let $\bn$ denote the unit ball in $\cn$ and let $\sn=\partial
\bn$, where $N$ is a positive integer. For a point
$z=(z_1,\ldots,z_N)\in \cn$ we write
$|z|=(|z_1|^2+\ldots+|z_N|^2)^{1/2}$. The normalized Lebesgue
measures on $\bn$ and $\sn$ will be denoted by $dv$ and $d\sigma$,
respectively.
 The $L^p$-mean over the sphere $|z|=r$ $(0<r<1)$ of a  Borel function $f$ on $\bn$
    is defined by
\[\begin{aligned} M_p(r,f)= \left(\int_{\sn }|f(r\zeta)|^p\dsi(\zeta) \right)^{1/p}
 \quad (1\le p<\infty)\end{aligned} \]
and $M_\infty(r,f)=\mathop{\rm ess\,sup}_{\zeta_\in\sn}|f(r\zeta)|.$
The class $L^{p,q}_\alpha(\bn)$, $\alpha\in \mathbb R,$ $1\le q\le\infty,$ consists of
 those functions $f$ for which
\begin{equation}\label{}\begin{aligned}
\|f\|_{p,q,\alpha}:=\left(2N\int_0^1
M_p^q(r,f)(1-r^2)^{q\alpha-1}r^{2N-1}\,dr\right)^{1/q}\ <\infty.
 \end{aligned} \end{equation}
 In the case $q=\infty$ this should be interpreted as
 \[\|f\|_{p,\infty,\alpha}:=\mathop{\rm ess\,sup}_{0\le r<1}(1-r^2)^\alpha M_p(r,f)\ <\infty.\]

Let $H(\bn)$ denote the space of all holomorphic functions in
$\bn$. For $1\le p\le\infty$ the Hardy space
 $H^p(\bn)=H^p$ consists of all functions $f\in H(\bn)$ such that
\[\begin{aligned} \|f\|_p:=\sup_{0<r<1}M_p(r,f)\ <\infty,  \end{aligned} \]
i.e. $H^p(\bn)=L^{p,\infty}_0(\bn)\cap H(\bn).$ For information on
Hardy spaces of several variables we refer to   \cite{20}.

We are interested in the mixed-norm Bergman space
\[\begin{aligned}
 H^{p,q,\alpha }=H^{p,q,\alpha }(\bn)=L^{p,q}_\alpha\cap H(\bn)
  \end{aligned} \]
and some closely related spaces. Note that  $H^{p,q,\alpha
}=\{0\}$ for $\alpha\le 0$, if $1\leq q<\infty $, and for $\alpha
<0$ if $q=\infty .$
 By using standard arguments one proves that $H^{p,q, \alpha }$ is complete.
  These spaces arise naturally in the study of Hardy spaces
(see  \cite{7}). Note also that $(1.1)$, for $1\leq p=q<\infty $,
reduces to
\[\begin{aligned}
\|f\|_{p,p,\alpha}=\int_\bn |f(z)|^p(1-|z|^2)^{p\alpha-1}\,dv(z).
   \end{aligned} \]
Therefore, $H^{p,p,\alpha }$ is a weighted Bergman space.

\emph{Convention}. From now on, unless specified otherwise, we
will assume $1\le p,q\le\infty,$  $\alpha>0,$ and $\nu$, $N$ are
positive integers.

\subsection{Besov-Lipschitz spaces}
In order to give one of several possible definitions of
Besov-Lipschitz spaces we need some more notation. The radial
derivative  $\mathcal Rf,$ where  $f\in H(\bn)$, is defined as
\[\mathcal Rf(z)=\sum_{j=1}^N z_j \frac{\partial f}{\partial z_j}(z),\footnotemark{} \]
\footnotetext{In \cite{6}, $\mathcal Rf$ is defined as
$f(0)+\sum_{j=1}^N z_j\partial f/\partial z_j$.} which can also be
written as
\[\mathcal Rf(z)=\sum_{k=1}^\infty k f_k(z),\]
where
\(f(z)=\sum_{k=0}^\infty  f_k(z)\)
is the homogeneous expansion of $f.$
Using the homogeneous expansion we can define $\mathcal R^sf$ for any $s\in\mathbb C$ by
\[\mathcal R^sf(z)=\sum_{k=1}^\infty k^sf_k(z).\]
We define the Besov-Lipschitz space
$\Lambda^{p,q}_\alpha(\bn)=\Lambda^{p,q}_\alpha,$ $\alpha >0$, by
\begin{equation}\label{rs}
\begin{aligned}
 f\in  \Lambda^{p,q}_\alpha\Longleftrightarrow
\mathcal R^sf\in H ^{p,q,s-\alpha},
\end{aligned}
\end{equation}
where $s$ is the smallest integer  greater than $\alpha.$ In
Section 2 (Theorem 2.1) we give a new proof of the well known fact
that $s$ in \eqref{rs} can be replaced by any real number greater
than $\alpha .$ For $1\leq q <\infty $ the spaces
$\Lambda^{p,q}_{\alpha }$ are usually called Besov spaces. For the
Lipschitz space $\Lambda^{p,\infty }_{\alpha }$ we write
$\Lambda^p_\alpha=\Lambda_\alpha^{p,\infty}.$ In particular,
$\Lambda^{\infty}_\alpha$ consists of those $f\in H(\bn)$ for
which
$$
    \sup_{z\in \mathbb B}(1-|z|^2)^{s-\alpha}|\mathcal R^sf(z)|<\infty,
$$
where $s$ is as above. This condition has sense for $s=\alpha$ as
well and defines the Hardy-Sobolev spaces $H^{\infty }
_\alpha(\bn),$ $\alpha>0.$ More generally, the Hardy-Sobolev space
$H^p_{\alpha }(\bn )$ is defined by
$$
\begin{aligned} H^p_{\alpha
}=H^p_\alpha(\bn)=\{f\in H(\bn): \mathcal R^\alpha f\in
H^p(\bn)\}.
 \end{aligned}
 $$

\begin{prop}\label{replace}   In the definition of $\Lambda^{p,q}_\alpha,$ condition
\eqref{rs} can be replaced by
\begin{equation}\label{des}
\begin{aligned}
\nabla_s(f)\in L^{p,q}_{s-\alpha}(\bn),
 \end{aligned}
 \end{equation}
where $\nabla_s(f)$ is the $s$-th order gradient of $f$.
\end{prop}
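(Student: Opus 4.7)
The plan is to prove the two inclusions separately, using Theorem 2.1 (which allows us to replace $s$ in \eqref{rs} by any real number greater than $\alpha$) to freely switch between different orders of radial differentiation.

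For the easy direction $\nabla_s(f)\in L^{p,q}_{s-\alpha}\Rightarrow \mathcal{R}^s f\in H^{p,q,s-\alpha}$, I would iterate the definition $\mathcal{R}f(z)=\sum_{j=1}^{N} z_j\,\partial f/\partial z_j$ to obtain an expansion
\[
\mathcal{R}^s f(z) = \sum_{1\le|\beta|\le s} c_\beta\, z^\beta\, \partial^\beta f(z),
\]
with nonnegative integer coefficients $c_\beta$. Since $|z^\beta|\le 1$ on $\bn$, this gives the pointwise bound $|\mathcal{R}^s f(z)|\le C\sum_{|\beta|\le s}|\partial^\beta f(z)|$. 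The top-order partials are components of $\nabla_s(f)$ and thus already in $L^{p,q}_{s-\alpha}$; a lower-order partial $\partial^\beta f$ with $|\beta|<s$ is recovered from $\nabla_{|\beta|+1}(f)$ by iterated radial integration, which by a standard Hardy-type manipulation places $\partial^\beta f$ in $L^{p,q}_{|\beta|-\alpha}\hookrightarrow L^{p,q}_{s-\alpha}$.

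For the reverse direction, induction on $s$ yields the algebraic identity
\[
\mathcal{R}^s f(z) = \sum_{|\beta|=s} \frac{s!}{\beta!}\, z^\beta\, \partial^\beta f(z) + \sum_{j=1}^{s-1}\lambda_{s,j}\,\mathcal{R}^j f(z),
\]
with rational constants $\lambda_{s,j}$. By Theorem 2.1, every $\mathcal{R}^j f$ with $j>\alpha$ lies in $L^{p,q}_{j-\alpha}\subset L^{p,q}_{s-\alpha}$; the remaining terms with $j\le\alpha$ are handled by observing that $\mathcal{R}^j f$ is obtained from $\mathcal{R}^s f$ by an $(s-j)$-fold radial integration, which introduces a harmless factor of order $(1-r)^{s-j}$. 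Hence the principal part
\[
G(z):=\sum_{|\beta|=s}\frac{s!}{\beta!}\,z^\beta\,\partial^\beta f(z)
\]
belongs to $L^{p,q}_{s-\alpha}$.

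The main obstacle is extracting the individual partials $\partial^\beta f$ from the single function $G$. For fixed $z\in\bn$ the map $\zeta\mapsto P_z(\zeta)=\sum_{|\beta|=s}(s!/\beta!)\,\zeta^\beta\,\partial^\beta f(z)$ is a homogeneous polynomial of degree $s$ on $\cn$ with Taylor coefficients $\partial^\beta f(z)$, and a Bombieri-type inequality bounds these coefficients by the $L^p(\sn)$-mean of $P_z$. It remains to express $P_z$ in terms of values of $G$: I would use a Cauchy-type reproducing formula writing $P_z(\zeta)$ as an integral of $G$ over a sphere of radius slightly larger than $|z|$, and then pass the mixed norm $\|\cdot\|_{p,q,s-\alpha}$ through the averaging by Minkowski's inequality. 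The delicate technical point is controlling the resulting shift in radius, which is absorbed by the standard observation that the weight $(1-r^2)^{s-\alpha}$ is essentially invariant under passage to a comparable radius $r'=(1+r)/2$.
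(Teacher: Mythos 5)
Your easy direction ($\nabla_s(f)\in L^{p,q}_{s-\alpha}\Rightarrow\mathcal R^sf\in H^{p,q,s-\alpha}$) is essentially the paper's: expand $\mathcal R^sf$ as a combination of $z^\beta\partial^\beta f$, $1\le|\beta|\le s$, and absorb the lower-order partials by radial integration; this is inequality \eqref{1.4}. The algebraic identity you use in the other direction, $\mathcal R^sf=\sum_{|\beta|=s}\frac{s!}{\beta!}z^\beta\partial^\beta f+\sum_{j<s}\lambda_{s,j}\mathcal R^jf$, is also correct.

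The reverse direction, however, has a genuine gap exactly at the step you flag as ``the main obstacle.'' Knowing that $G(w)=\sum_{|\beta|=s}\frac{s!}{\beta!}w^\beta\partial^\beta f(w)$ lies in $L^{p,q}_{s-\alpha}$ does not let you recover the individual partials $\partial^\beta f(z)$: $G$ is only the \emph{diagonal} evaluation $G(w)=P_w(w)$ of the family of polynomials $P_z(\zeta)=\sum_{|\beta|=s}\frac{s!}{\beta!}\zeta^\beta\partial^\beta f(z)$, whose coefficients change with the base point. A Bombieri-type coefficient estimate would require the values of the single polynomial $P_z(\cdot)$ on a whole sphere in the $\zeta$-variable, whereas the sphere integral you propose only sees $P_\zeta(\zeta)$, i.e.\ $\partial^\beta f(\zeta)$ rather than $\partial^\beta f(z)$; making these comparable is precisely what you are trying to prove, so the argument is circular. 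There is no Cauchy-type reproducing formula of the kind described, and indeed separating the tangential from the radial components of the derivative is the genuinely nontrivial content of this implication. The paper handles it by the Ahern--Schneider inequality $M_p(r,\nabla f)\le CM_p(r,\mathcal Rf)$ (Lemma \ref{ahern}), a hard result proved via the Cauchy--Szeg\H{o} kernel, which is then iterated in Lemma \ref{ahern2} to give $\|\nabla_n(f)\|_p\le C\|\mathcal R^nf\|_p$ using the commutation identity $\mathcal R^{n-1}\partial_jf=\partial_j(\mathcal R-I)^{n-1}f$ and the monotonicity $\|\mathcal Rf\|_p\le\|\mathcal R^2f\|_p$. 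Your proposal never invokes this (or any substitute for it), so the crucial step is missing.
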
 The second order gradient is defined as
$$
\begin{aligned}
\nabla_2(f)(z)=\bigg(\sum_{j,k=1}^N{|\partial_j\partial_kf(z)|^2}\bigg)^{1/2},
 \end{aligned}
 $$
where
\[\begin{aligned}  \partial_jf(z)=\frac{\partial f}{\partial z_j}(z). \end{aligned} \]
In the case $s>2$, $\nabla_s(f)$ is defined similarly. Of course
\(\nabla_1(f)(z)=|\nabla f(z)|,\) where $\nabla f(z)$ is the
ordinary holomorphic gradient.

The inequality
\begin{equation}\label{1.4}\begin{aligned}M_p(r,\mathcal R^sf)\le C\sum_{k=1}^{s-1}\nabla_k(f)(0)
+M_p(r,\nabla_s(f)),  \end{aligned} \end{equation}
which proves  that \eqref{des} implies \eqref{rs}  is simple.
 The reverse implication
  can also be verified in a relatively simple way (see, e.g.,
  the proofs of Lemmas 1 and 2 in \cite{17}).
However the following result, due to Ahern and Schneider \cite{1},
 shows that much more holds.

\begin{otherl}\label{ahern} If $f\in H(\bn)$, then $M_p(r,\nabla f)
\le CM_p(r,\mathcal Rf),for $
$1/4<r<1.$
 \end{otherl}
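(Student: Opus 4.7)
The plan is to exhibit the linear map $\mathcal{R}f\mapsto \partial_jf$ on each sphere $|z|=r$ as an integral operator on $\sn$ with an explicit kernel, and then invoke Forelli-Rudin type estimates for that operator.

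First, I would use the homogeneous decomposition $f=f(0)+\sum_{k\ge 1}f_k$ to obtain the radial inversion formula
$$f(z)-f(0)=\int_0^1\mathcal{R}f(sz)\,\frac{ds}{s},$$
which recovers $f$ from $\mathcal{R}f$. Combined with the Cauchy reproducing formula applied to the dilate $f_r(w)=f(rw)$ (which extends smoothly to $\overline{\bn}$), a short computation using the power series identity $\sum_{k\ge1}\binom{N+k-1}{k}w^{k-1}=[(1-w)^{-N}-1]/w$ yields the representation
$$\partial_jf(r\eta)=\frac{1}{r}\int_{\sn}\mathcal{R}f(r\zeta)\,\bar{\zeta}_j\,\frac{(1-\langle\eta,\zeta\rangle)^{-N}-1}{\langle\eta,\zeta\rangle}\dsi(\zeta),\qquad \eta\in\sn,$$
for each $j=1,\ldots,N$. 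The $-1$ in the numerator encodes the vanishing of the mean value of $\mathcal{R}f$ over $\sn$ and lets us discard the constant part of the Cauchy kernel; after this subtraction the only singularity of the kernel on $\sn\times\sn$ is the Cauchy-Szeg\H{o} one at $\eta=\zeta$.

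Next, I would show that this integral operator is bounded on $L^p(\sn)$ uniformly in $r\in(1/4,1)$. Its singular part is the Cauchy-Szeg\H{o} kernel $(1-\langle\eta,\zeta\rangle)^{-N}$, and the cofactor $\bar{\zeta}_j[1-(1-\langle\eta,\zeta\rangle)^N]/\langle\eta,\zeta\rangle$ is a bounded multiplier on the sphere. The required boundedness then follows for $1<p<\infty$ from the $L^p$-continuity of the Szeg\H{o} projection, while for the endpoints $p=1,\infty$ one exploits the fact that $\mathcal{R}f$ is already a holomorphic boundary value, so the operator stays within the Hardy class where the projection is controlled via the nontangential maximal function. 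Summing over $j$ converts the pointwise bound on each $\partial_jf(r\eta)$ into the claimed estimate for $M_p(r,\nabla f)$.

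The principal obstacle is verifying the $L^p(\sn)$ boundedness of this kernel operator uniformly in $r$. The restriction $r>1/4$ enters precisely to control the prefactor $1/r$ and to ensure that the dilation $f_r$ yields Schur-test estimates whose constants are independent of $r$; it is the scale below which one would otherwise have to dispose of the few low-order Taylor coefficients of $f$ by hand, using the elementary Cauchy estimates already noted in the inequality \eqref{1.4}.
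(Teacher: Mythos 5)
First, a point of reference: the paper does not prove this lemma at all --- it is quoted from Ahern--Schneider \cite{1} (see also Rudin \cite[\S 6.6.3]{19}) --- so there is no internal proof to compare yours against, and your argument has to stand on its own. Your integral representation is correct: expanding $\bigl[(1-w)^{-N}-1\bigr]/w=\sum_{m\ge1}\binom{N+m-1}{m}w^{m-1}$ and using the orthogonality relations $\int_{\sn}\zeta^\alpha\bar\zeta^\beta\,d\sigma=\delta_{\alpha\beta}(N-1)!\,\alpha!/(N-1+|\alpha|)!$ one verifies it on each homogeneous component. It reduces the lemma to the uniform $L^p(\sn)$-boundedness, on holomorphic inputs $g=\mathcal Rf(r\,\cdot)$, of the operator $g\mapsto\int_{\sn}g(\zeta)\,\bar\zeta_j\,\bigl[(1-\langle\eta,\zeta\rangle)^{-N}-1\bigr]\langle\eta,\zeta\rangle^{-1}\,d\sigma(\zeta)$. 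Since the cofactor $[1-(1-w)^N]/w$ is a polynomial in $w=\langle\eta,\zeta\rangle$, writing $w^l=(1-(1-w))^l$ shows this operator equals a constant multiple of $g\mapsto P(\bar\zeta_j g)$, with $P$ the Cauchy--Szeg\H{o} projection, plus operators whose kernels are $O(|1-\langle\eta,\zeta\rangle|^{-(N-m)})$ with $m\ge1$, hence uniformly integrable and harmless for every $1\le p\le\infty$. For $1<p<\infty$ the Koranyi--V\'agi theorem on the $L^p$-boundedness of $P$ then finishes the proof, so that range is fine.

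The genuine gap is at $p=1$ and $p=\infty$, which the paper's standing convention $1\le p\le\infty$ requires (the space $\Lambda^\infty_\alpha$ and all the $q=\infty$ statements use them). There $P$ is unbounded on $L^p(\sn)$, and your appeal to ``$\mathcal Rf$ is already a holomorphic boundary value, so the operator stays within the Hardy class'' does not close the argument: the function $\bar\zeta_j g$ to which $P$ is applied is \emph{not} holomorphic, so no Hardy-space mapping property of $P$ is available. In fact the composite operator sends $\zeta^\alpha$ to $(\alpha_j/|\alpha|)\zeta^{\alpha-e_j}$, i.e.\ it is exactly ``$\partial_j$ composed with the inverse of $\mathcal R$'' in disguise, so asserting its boundedness on $H^1$ and $H^\infty$ is a restatement of the lemma, not a proof of it. What is missing is the cancellation that makes the relevant kernel absolutely integrable at the endpoints: one must split $\nabla f$ into its radial component, which is pointwise dominated by $|\mathcal Rf(z)|/|z|$, and its complex-tangential components, for which the representing kernel gains a factor of order $|1-\langle\eta,\zeta\rangle|^{1/2}$ (the ``weight $1/2$'' of the tangential fields $T_{i,j}$ exploited in Section 4 of the paper); this pushes the singularity below the critical exponent $N$ and yields a Schur/Minkowski estimate valid for all $1\le p\le\infty$. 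Equivalently, one integrates by parts on $\sn$ against the tangential vector fields, as in Ahern--Schneider's original argument. Without such a step the endpoint cases remain unproved.
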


This fact is a reformulation of the original result of Ahern and
Schneider (see Rudin \cite[\$6.6.3]{19}). We will deduce the
inequality $M_p(r,\nabla_n(f))\le CM_p(r,\mathcal R^nf)$, from
Lemma \ref{ahern}, in Section 4  (Lemma \ref{ahern2}). In Section
4 we will also show that the radial derivative in $(1.2)$ can also
be replaced by the tangential gradients.

\subsection{Mean Besov-Lipschitz spaces}

Very recently, by using the $L^p$-modulus of continuity of order
one and two, Cho, Koo and Kwon \cite{5,13} and Cho and Zhu
\cite{6} defined the appropriate holomorphic mean Besov-Lipschitz
spaces. Their definition of the $L^p$-modulus of continuity can be
extended as follows:

Let $\mathcal U$ denote the group of all unitary transformations
of $\mathbb C^N.$ For $U\in \mathcal U$  and a function $f\in
H(\mathbb B_N),$ we let
$$
\begin{aligned} f_U(z)=f(Uz),
 \end{aligned}
 $$
 $$
 \begin{aligned}
 \Delta^1_Uf(z)=f(Uz)-f(z),
  \end{aligned}
  $$
$$
\begin{aligned} \Delta^n_Uf=\Delta^1_U(\Delta^{n-1}_Uf)\quad
(n\ge 2).
  \end{aligned}
  $$
  In particular
  $$
  \begin{aligned}
  \Delta^2_Uf(z)=f(U^2z)-2f(Uz)+f(z).
    \end{aligned}
 $$

Then, for $f\in H^p,$ let
\begin{equation}\label{}\begin{aligned}
\omega_n(\delta ,f)_p=\sup_{\|U-I\|<\delta,\, U\in\mathcal
U}\|\Delta_U^nf\|_p\quad (Iz=z).
 \end{aligned} \end{equation}

The holomorphic  mean Besov-Lipschitz space $\rm Lip^{p,q}_{\alpha
}(\bn )=\rm Lip^{p,q}_{\alpha }$ consists of those $f\in H^p(\bn)$
for which
$$
\int_0^1\biggl [\frac {\omega_n(\delta ,f)_p }{\delta^{\alpha
}}\biggr ]^q\frac {d\delta }{\delta }<\infty \quad \text{if}\quad
1\leq q<\infty
$$
and
$$
\omega_n(\delta ,f)_p=\mathcal O(\delta^{\alpha }), \quad
\text{if}\quad q=\infty ,
$$
where $n$ is the smallest integer greater than $\alpha $.

We consider other two moduli:
\begin{equation}\label{}\begin{aligned}
\omega_n^-(\delta,f)_p=\sup_{|t|<\delta}\|\Delta^n_tf\|_p,
  \end{aligned} \end{equation}
  where
$$
\begin{aligned} \Delta^n_tf=\Delta_t^1(\Delta^{n-1}_tf),\quad
\Delta^1_tf(z)=f(e^{it}z)-f(z), \quad t\in \mathbb R.
  \end{aligned}
$$
 To define the second one denote by $\mathcal L=\mathcal L(\bn)$
the semigroup of all linear operators from $\bn$ to $\bn$, and
then let

\begin{equation}
\omega_n^+(\delta,f)_p=\sup_{\|U-I\|<\delta,\ U\in\mathcal
L}\|\Delta^n_Uf\|_p,
\end{equation}

 where $\Delta^n_U$ is defined as above. From $(1.5)$, $(1.6)$ and $(1.7)$
   we have
\begin{equation}\label{8}\begin{aligned}
{\omega_n^-(\delta,f)_p}\le{ {\omega_n(\delta,f)_p}}\le
{{\omega_n^+(\delta,f)_p}}.
 \end{aligned} \end{equation}

We are now ready to state our main result which shows that
different $L^p$-moduli of continuity defined above give the same
mean Besov-Lipschitz spaces.

  \begin{thm} We have $\Lambda_\alpha^{p,q}(\mathbb B_N)\subset H^p(\mathbb B_N)$.
  For a function $f\in H^p$ and $0<\alpha<n$, the following conditions are equivalent:
\begin{itemize}
\item[\rm $(\mathcal R)$] $f\in \Lambda^{p,q}_\alpha$;\\[-1ex]
\item [\rm $(\Delta)$] $\displaystyle\left(\int_0^1\left[\frac{\|\Delta_t^nf\|_{p}}{t^\alpha}\right]^q\,
\frac{dt}t\right)^{1/q}\ <\infty $;
\item [\rm $(\omega^{-})$] $\displaystyle\left(\int_0^1\left[\frac{\omega_n^{-}(\delta,f)_p}{\delta^\alpha}\right]^q\,
 \frac{d\delta}\delta\right)^{1/q}\ <\infty;$\\[1ex]
\item [\rm $(\omega)$] $\displaystyle\left(\int_0^1\left[\frac{\omega_n(\delta,f)_p}{\delta^\alpha}\right]^q\,
 \frac{d\delta}\delta\right)^{1/q}\ <\infty;$\\[1ex]
\item [\rm $(\omega^{+})$] $\displaystyle\left(\int_0^1\left[\frac{\omega_n^{+}(\delta,f)_p}{\delta^\alpha}\right]^q\,
 \frac{d\delta}\delta\right)^{1/q}\ <\infty;$\\[1ex]
\end{itemize}
 \end{thm}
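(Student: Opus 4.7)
I would close the equivalences along the cycle
\[(\mathcal R) \Longrightarrow (\omega^+) \Longrightarrow (\omega) \Longrightarrow (\omega^-) \Longrightarrow (\Delta) \Longrightarrow (\mathcal R).\]
The three middle implications are immediate: $(\omega^+)\Rightarrow(\omega)\Rightarrow(\omega^-)$ is exactly $(1.8)$, and $(\omega^-)\Rightarrow(\Delta)$ follows from $\|\Delta_t^nf\|_p\le\omega_n^-(|t|,f)_p$. For the inclusion $\Lambda^{p,q}_\alpha\subset H^p$, I would use $r\partial_r f(rz)=\mathcal Rf(rz)$ to write $f(rz)-f(r_0z)=\int_{r_0}^r\mathcal Rf(\rho z)\,d\rho/\rho$; iterating $s$ times and inserting the hypothesis $\mathcal R^sf\in H^{p,q,s-\alpha}$, Minkowski's inequality together with $\alpha>0$ delivers $\sup_rM_p(r,f)<\infty$. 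The real work therefore lies in $(\mathcal R)\Rightarrow(\omega^+)$ and $(\Delta)\Rightarrow(\mathcal R)$.

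\textbf{From $(\mathcal R)$ to $(\omega^+)$.} First invoke Proposition \ref{replace} (with Theorem~$2.1$ to allow the integer $n$ in place of $s$) so that the hypothesis reads $\nabla_n f\in L^{p,q}_{n-\alpha}(\bn)$. Given $U\in\mathcal L$ with $\|U-I\|<\delta$, set $U_\tau=(1-\tau)I+\tau U$ and verify by evaluation at the corners of the unit cube the iterated telescoping identity
\[\Delta^n_U f(z)=\int_{[0,1]^n}\frac{\partial^n}{\partial\tau_1\cdots\partial\tau_n}f\bigl(U_{\tau_1}\cdots U_{\tau_n}z\bigr)\,d\tau_1\cdots d\tau_n.\]
Each differentiation pulls out a factor $(U-I)z$ of norm at most $\delta|z|$, so $|\Delta_U^n f(z)|\le C\delta^n\int_{[0,1]^n}\nabla_nf(U_{\tau_1}\cdots U_{\tau_n}z)\,d\tau$. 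Since each $U_\tau$ contracts $\bn$ into itself, the arguments stay inside $\bn$; applying Minkowski in $z$ then yields an estimate of the form $\omega_n^+(\delta,f)_p\le C\delta^n\sup_{1-\delta\le r<1}M_p(r,\nabla_nf)$. A dyadic decomposition $\delta\sim 2^{-j}$, matched with the radial dyadic decomposition $1-r\sim 2^{-j}$ of the norm $\|\nabla_nf\|_{p,q,n-\alpha}$, converts $\int_0^1[\omega_n^+(\delta,f)_p/\delta^\alpha]^q\,d\delta/\delta$ into the Besov--Lipschitz norm.

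\textbf{From $(\Delta)$ to $(\mathcal R)$.} This is the principal obstacle. Writing the homogeneous expansion $f=\sum_k f_k$ gives
\[\Delta_t^n f(rz)=\sum_{k\ge1}(e^{ikt}-1)^n r^k f_k(z),\]
and the idea is to invert this symbol relation. For each $r\in(1/2,1)$ I would construct a kernel $K_{n,r}$ on $(0,\pi)$ with
\[\int_0^\pi(e^{ikt}-1)^n K_{n,r}(t)\,dt=k^n r^k\ (k\ge1),\qquad \int_0^\pi|K_{n,r}(t)|\,dt\le C(1-r)^{-n},\]
so that $\int_0^\pi\Delta_t^nf(rz)K_{n,r}(t)\,dt=\mathcal R^nf(r^2z)$. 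Such $K_{n,r}$ can be built as a finite combination of Fej\'er-type kernels adapted to the frequency scale $k\sim(1-r)^{-1}$, exploiting that $(e^{ikt}-1)^n\sim(ikt)^n$ for $kt\le 1$ while $r^k$ damps the high frequencies $k\gg(1-r)^{-1}$. Minkowski's inequality then yields $M_p(r^2,\mathcal R^nf)\le C(1-r)^{-n}\omega_n^-(1-r,f)_p$; a standard doubling/averaging argument replaces $\omega_n^-$ by an $L^q$-average of $\|\Delta_t^nf\|_p$; raising to the $q$-th power, multiplying by $(1-r)^{(n-\alpha)q-1}$ and integrating in $r$ produces the $(\Delta)$-integral, after which Theorem~$2.1$ upgrades from radial index $n$ back to $s$. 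The delicate step is maintaining $\|K_{n,r}\|_{L^1}\le C(1-r)^{-n}$ uniformly as $r\to 1^-$ while realizing the multiplier identity across all frequency scales; this uniform multiplier estimate is the technical heart of the argument.
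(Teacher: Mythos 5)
Your architecture --- the cycle $(\mathcal R)\Rightarrow(\omega^+)\Rightarrow(\omega)\Rightarrow(\omega^-)\Rightarrow(\Delta)\Rightarrow(\mathcal R)$ with the middle implications read off from $(1.8)$ --- is exactly the paper's, and your radial-integration proof of $\Lambda^{p,q}_\alpha\subset H^p$ is an acceptable alternative to the paper's dyadic-block argument (Corollary 2.4). But both hard implications contain genuine gaps.

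In $(\mathcal R)\Rightarrow(\omega^+)$ the telescoping identity over the cube is fine (modulo the lower-order gradients $\nabla_kf$, $k<n$, that the mixed partial also produces), but the step to $\omega_n^+(\delta,f)_p\le C\delta^n\sup_{1-\delta\le r<1}M_p(r,\nabla_nf)$ fails. The intermediate points $U_{\tau_1}\cdots U_{\tau_n}z$ fill the whole ball $|w|\le|z|$, so Minkowski plus the sub-mean-value property for contractions gives only $M_p(r,\Delta^n_Uf)\le C\delta^nM_p(r,\nabla_nf)$; passing to the $H^p$ norm means letting $r\to1$, which yields $C\delta^n\|\nabla_nf\|_p=\infty$ for a typical $f\in\Lambda^{p,q}_\alpha$ with $\alpha<n$ (and your bound as literally written, with the sup over $r\ge1-\delta$, is infinite for the same reason). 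The charitable reading $\omega_n^+(\delta,f)_p\le C\delta^nM_p(1-\delta,\nabla_nf)$ is false: for the lacunary $f(z)=\sum_k k^{-2}z^{2^k}$ in one variable, $\omega_1(\delta,f)_\infty\asymp(\log\tfrac1\delta)^{-1}$ while $\delta M_\infty(1-\delta,f')\asymp(\log\tfrac1\delta)^{-2}$. The missing idea is the splitting $f=f_{1-\delta}+(f-f_{1-\delta})$: your Lagrange-type argument is valid only for the dilate $f_{1-\delta}$, whose derivatives are sampled at radius $\le1-\delta$ (this is the paper's Corollary 1.7 and Lemma 5.5), while the tail $f-f_{1-\delta}$ must be handled by radial integration of $\mathcal Rf$, producing the bound $\omega_n^+(\delta,f)_p\le C\int_{1-\delta}^1(1-r)^{n-1}M_p(r,\mathcal R^nf)\,dr$ of Theorem 1.6; only this integral form feeds correctly into the Jensen/Hardy computation that yields the $(\omega^+)$ integral (Corollary 1.10).

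In $(\Delta)\Rightarrow(\mathcal R)$ you have deferred the entire difficulty to the unconstructed kernel $K_{n,r}$ and its uniform $L^1$ bound, which you yourself identify as the technical heart; note also that $(e^{ikt}-1)^n$ vanishes at $t=2\pi j/k$, so the inversion is not a straightforward division of symbols. Moreover, even granting $K_{n,r}$, your chain lands on the $(\omega^-)$ integral, and the ``standard doubling/averaging argument'' converting $\omega_n^-(1-r,f)_p$ into an $L^q$-average of $\|\Delta_t^nf\|_p$ is a Marchaud-type inequality that would itself need proof; as stated it is dangerously close to assuming $(\Delta)\Rightarrow(\omega^-)$, which is not among your established implications. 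The paper bypasses all of this with the elementary identity $\Delta_t^nh(\theta)=\int_{[0,t]^n}h^{(n)}(\theta+x_1+\cdots+x_n)\,dx_1\cdots dx_n$ for $h(\theta)=f(re^{i\theta}\zeta)$, which gives $M_p(r,\mathcal R^nf)\le t^{-n}\|\Delta^n_tf\|_p+Ct(1-r)^{-1}M_p(\tfrac{1+r}2,\mathcal R^nf)$ for $0<t\le1-r$, followed by the absorption argument with $t=(1-r)/2^m$ and the doubling weight $\psi$ (Theorem 1.5). If you wish to keep the multiplier route you must actually construct $K_{n,r}$ and supply the averaging lemma; otherwise the paper's differential--difference identity is the shorter path.
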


In the case $q=\infty $ we have

  \begin{thm} We have $\Lambda_\alpha^{p}(\mathbb B_N)\subset H^p(\mathbb B_N)$.
  For a function $f\in H^p$ and $0<\alpha<n$, the following conditions are equivalent:
\begin{itemize}
\item[\rm $(\mathcal R)$] $f\in \Lambda^{p}_\alpha$;\\[-1ex]
\item [\rm $(\Delta)$] $||\Delta_t^nf\|_{p}=\mathcal O(t^\alpha ), \, 0<t<1;$
\item [\rm $(\omega^{-})$]
$\omega_n^{-}(\delta,f)_p=\mathcal O(\delta^\alpha ),\quad 0<\delta <1;$\\[1ex]
\item [\rm $(\omega )$]
$\omega_n(\delta,f)_p=\mathcal O(\delta^\alpha ),\quad 0<\delta <1;$\\[1ex]
\item [\rm $(\omega^{+})$]
$\omega_n^{+}(\delta,f)_p=\mathcal O(\delta^\alpha ),\quad 0<\delta <1;$\\[1ex]
\end{itemize}
 \end{thm}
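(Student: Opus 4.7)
I would prove Theorem 1.2 by the same cyclic strategy used for Theorem 1.1, adapting integral conditions to supremum conditions. The implications $(\omega^+)\Rightarrow(\omega)\Rightarrow(\omega^-)$ are immediate from (1.8), and $(\omega^-) \Leftrightarrow (\Delta)$ is immediate from $\omega_n^-(\delta,f)_p = \sup_{|t|<\delta}\|\Delta_t^n f\|_p$ together with the rotation-invariance identity $\|\Delta^n_{-t}f\|_p = \|\Delta^n_t f\|_p$. So the two nontrivial implications I need are $(\mathcal R)\Rightarrow(\omega^+)$ and $(\omega^-)\Rightarrow(\mathcal R)$.

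For $(\mathcal R)\Rightarrow(\omega^+)$: by Proposition 1.1, whose proof clearly extends to $q=\infty$, the hypothesis is equivalent to $M_p(r,\nabla_n f)\le C(1-r)^{\alpha-n}$. Given $U\in\mathcal L$ with $\|U-I\|<\delta$, split $f = f_\delta + g_\delta$ where $f_\delta(z) = f((1-\delta)z)$. The dilate $f_\delta$ extends holomorphically past $|z|=1$, so representing $\Delta^n_U f_\delta$ as an $n$-fold iterated integral of the tensor $\nabla_n f$ along the polygonal path $z\to Uz\to\cdots\to U^n z$ (which, since $\|U\|\le 1$, stays within $\overline{\mathbb B_N}$) yields $\|\Delta_U^n f_\delta\|_p\le C\delta^n M_p(1-\delta,\nabla_n f)\le C\delta^\alpha$. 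For the tail $g_\delta$, combine the trivial estimate $\|\Delta_U^n g_\delta\|_p\le 2^n\|g_\delta\|_p$ with the auxiliary bound $\|f - f_\delta\|_p\le C\delta^\alpha$, which I would prove directly from $(\mathcal R)$ by iterating the identity $f - f_r = -\int_r^1 \rho^{-1}\mathcal R f(\rho\,\cdot)\,d\rho$ and invoking $M_p(r,\mathcal R^n f)=O((1-r)^{\alpha-n})$ together with Minkowski's inequality in the radial variable.

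For $(\omega^-)\Rightarrow(\mathcal R)$: using the homogeneous expansion $\Delta_t^n f(z) = \sum_{k\ge 1}(e^{ikt}-1)^n f_k(z)$, I would construct an averaging measure $\mu_r$ supported on $|t|\le 1-r$ of total variation $\sim(1-r)^{-n}$ satisfying $\int(e^{ikt}-1)^n\,d\mu_r(t) = k^n r^k$ for all sufficiently large $k$ (the low-frequency part contributing a controlled polynomial in $z$). Applying $\mu_r$ to $\Delta_t^n f$ then reproduces $\mathcal R^n f(rz)$ up to this polynomial, and the hypothesis gives
\[
 M_p(r,\mathcal R^n f)\le C(1-r)^{-n}\sup_{|t|<1-r}\|\Delta_t^n f\|_p \le C(1-r)^{\alpha-n},
\]
which is $(\mathcal R)$.

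The main obstacle is in $(\mathcal R)\Rightarrow(\omega^+)$: since elements of $\mathcal L$ need not be unitary, $\Delta^n_U$ is no longer norm-preserving on the sphere and the iterated-integral representation of $\Delta^n_U f_\delta$ must be arranged so that each $\nabla_n f$-evaluation occurs at a point whose distance from $\partial\mathbb B_N$ is comparable to $\delta$. Pinning down the correct scaling $\delta^n(1-\delta)^{\alpha-n}\sim \delta^\alpha$ and handling the low-regularity tail via the auxiliary estimate $\|f - f_\delta\|_p=O(\delta^\alpha)$ is where all the real work lies; the matching lower bound in $(\omega^-)\Rightarrow(\mathcal R)$ is mostly a matter of exhibiting the averaging kernel $\mu_r$.
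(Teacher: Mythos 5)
Your skeleton coincides with the paper's: the trivial chain $(\omega^{+})\Rightarrow(\omega)\Rightarrow(\omega^{-})\Leftrightarrow(\Delta)$, plus the two substantive implications $(\Delta)\Rightarrow(\mathcal R)$ and $(\mathcal R)\Rightarrow(\omega^{+})$, the latter via the splitting $f=f_{1-\delta}+(f-f_{1-\delta})$ with the dilated part controlled by $\|\Delta_U^n f_{1-\delta}\|_p\le C\delta^n M_p(1-\delta,\mathcal R^n f)$ (this is exactly the paper's Corollary 1.7, proved from Lemma 5.5). The genuine gap is in your tail estimate. The auxiliary bound $\|f-f_{1-\delta}\|_p\le C\delta^{\alpha}$ is simply false once $\alpha\ge 1$: take $N=1$ and $f(z)=z$, which belongs to $\Lambda^p_{\alpha}$ for every $\alpha$, yet $\|f-f_{1-\delta}\|_p=\delta$, which is not $\mathcal O(\delta^{\alpha})$ for $\alpha>1$; already at $\alpha=1$ (Zygmund case, $n=2$) one only gets $M_p(r,\mathcal Rf)=\mathcal O(\log\tfrac1{1-r})$ and hence $\|f-f_{1-\delta}\|_p=\mathcal O(\delta\log\tfrac1{\delta})$. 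Since $\|f-f_{1-\delta}\|_p$ can never decay faster than $\delta$ for nonconstant $f$, the crude estimate $\|\Delta_U^n g_{\delta}\|_p\le 2^n\|g_{\delta}\|_p$ cannot yield $\mathcal O(\delta^{\alpha})$ for $\alpha>1$, no matter how the radial integral identity is iterated. The repair --- and this is precisely what the paper's Lemma 5.6 does --- is to keep the $n$-th difference on the tail: write $\Delta_U^n(f-f_r)=\int_r^1 s^{-1}\Delta_U^n\mathcal Rf(s\,\cdot)\,ds$ and apply the dilate estimate to $\mathcal Rf$ inside the integral, which gives $\|\Delta_U^n(f-f_r)\|_p\le C\int_r^1(1-s)^{n-1}M_p(s,\mathcal R^n f)\,ds$, and this is $\mathcal O(\delta^{\alpha})$ under $(\mathcal R)$.

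Your route to $(\omega^{-})\Rightarrow(\mathcal R)$ also diverges from the paper and rests on an unproved construction: a measure $\mu_r$ supported in $|t|\le 1-r$, of total variation $\sim(1-r)^{-n}$, with $\int(e^{ikt}-1)^n\,d\mu_r(t)=k^nr^k$ exactly for all large $k$. This is a strong moment requirement (the left-hand side is a fixed linear combination of values of an entire function of exponential type at most $n(1-r)$), and you offer no construction; the target inequality $M_p(r,\mathcal R^n f)\le C(1-r)^{-n}\sup_{|t|<1-r}\|\Delta_t^n f\|_p$ is correct, but the paper reaches it far more elementarily: on each slice $h(\theta)=f(re^{i\theta}\zeta)$ it uses $\Delta_t^n h(\theta)=\int_{[0,t]^n}h^{(n)}(\theta+x_1+\cdots+x_n)\,dx_1\cdots dx_n$ to derive $t^nM_p(r,\mathcal R^n f)\le\|\Delta_t^n f\|_p+Ct^{n+1}(1-r)^{-1}M_p((1+r)/2,\mathcal R^n f)$, then takes $t=(1-r)/2^m$ and absorbs the second term (Theorem 1.5 and its corollaries, yielding $(1.12)$). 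Finally, the theorem also asserts $\Lambda^p_{\alpha}\subset H^p$, which the paper obtains from the dyadic decomposition (Corollary 2.4) and which your sketch leaves unaddressed.
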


In the case $N=1$ Theorems 1.2 and 1.3 are known.
 In the case when $1\leq p\leq \infty $, $q=\infty$ and $0<\alpha <1$,
 (then $(\omega^{-})$$\Longleftrightarrow$$(\Delta)$ is clear),
    this theorem was proved
 by Hardy and Littlewood \cite{8}. In the case $p=q=\infty,$ $\alpha=1,$
 this was proved by Zygmund \cite{21}.
 The equivalence $(\mathcal R)$$\Longleftrightarrow$$(\omega^{-})$ for all $p,q,\alpha$
 was proved by Oswald \cite{14}, while
 $(\mathcal R)$$\Longleftrightarrow$$(\Delta)$ was proved in \cite{15,16}.
  (Of course the implication $(\omega^{-})$$\implies$$(\Delta)$
 is clear.) The paper \cite{4} of Blasco and De Souza is also relevant here.

Very recently the equivalence $(\mathcal
R)$$\Longleftrightarrow$$(\omega)$, for $0<\alpha <2$,
 was proved by Cho, Koo and Kwon \cite{5,13} and by Cho and Zhu \cite{6}.

Concerning the Hardy-Sobolev spaces we have the following result,
that was proved in \cite{16} for $N=1$.

\begin{thm}
Let $n$ be a positive integer. Then $H^p_n(\mathbb B_N)\subset
H^p(\mathbb B_N).$ Moreover a function $f\in H^p(\mathbb B_N)$
belongs to $H^p_n(\mathbb B_N)$ if and only if
\begin{equation}\label{hs}\begin{aligned}
\omega_n(\delta ,f)_p=\mathcal O(\delta^n),\quad 0<\delta<1.
  \end{aligned} \end{equation}
In \eqref{hs} $\omega_n(\delta ,f)_p$ may be replaced by
 $\omega_n^{-}(\delta ,f)_p$ or by
 $\omega_n^{+}(\delta ,f)_p$.
\end{thm}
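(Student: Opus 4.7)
The plan rests on the sandwich \eqref{8}: since $\omega_n^-\le\omega_n\le\omega_n^+$, it suffices to prove $f\in H^p_n\Rightarrow\omega_n^+(\delta,f)_p=\mathcal O(\delta^n)$ and $\omega_n^-(\delta,f)_p=\mathcal O(\delta^n)\Rightarrow f\in H^p_n$; the inclusion $H^p_n\subset H^p$ will fall out along the way.

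\textbf{Inclusion and $\omega_n^-$.} For $g=f-f(0)$, I would first verify the inversion formula
\[
g(z)=\frac{1}{(n-1)!}\int_0^1(\mathcal R^nf)(tz)(\log1/t)^{n-1}\,\frac{dt}{t}
\]
by checking it on each homogeneous piece via $\int_0^1t^{k-1}(\log1/t)^{n-1}\,dt=(n-1)!/k^n$, and combine it with the auxiliary estimate $M_p(t,h)\le t\|h\|_p$, valid for every $h\in H^p$ with $h(0)=0$, which follows by slice-integrating over $\sn$ the one-variable Schwarz-type bound $M_p(t,h_\zeta)\le t\|h_\zeta\|_{H^p(\mathbb D)}$. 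This yields $\|f-f(0)\|_p\le\|\mathcal R^nf\|_p$ and in particular $f\in H^p$. For the $\omega_n^-$ direction I iterate $\tfrac{d}{d\tau}f(e^{i\tau}z)=i\mathcal Rf(e^{i\tau}z)$ to obtain
\[
\Delta_t^nf(z)=i^n\!\int_0^t\!\cdots\!\int_0^t(\mathcal R^nf)\bigl(e^{i(\tau_1+\cdots+\tau_n)}z\bigr)\,d\tau_1\cdots d\tau_n,
\]
so Minkowski and the rotation-invariance of $\sigma$ yield $\|\Delta_t^nf\|_p\le t^n\|\mathcal R^nf\|_p$.

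\textbf{The $\omega_n^+$ bound (main obstacle).} The key observation is that $\mathcal R$ commutes with $\mathcal L_Ug(z):=g(Uz)$ for every linear $U$ (the homogeneous decomposition makes this obvious, since $\mathcal L_U$ preserves degrees while $\mathcal R$ multiplies the degree-$k$ piece by~$k$), and hence $\mathcal R\Delta_U=\Delta_U\mathcal R$. Given a one-step estimate of the form
\[
\|\Delta_Ug\|_p\le C\delta\,\|\mathcal Rg\|_p\qquad(U\in\mathcal L,\;\|U-I\|<\delta),
\]
the full bound $\|\Delta_U^nf\|_p\le C^n\delta^n\|\mathcal R^nf\|_p$ then follows by $n$-fold induction: each application of the one-step estimate on the outermost $\Delta_U$ produces a factor of $C\delta$, while commutativity lets us move the newly introduced $\mathcal R$ inward past the remaining $\Delta_U$'s, converting $\mathcal R\,\Delta_U^{n-j}(\mathcal R^{j-1}f)$ into $\Delta_U^{n-j}(\mathcal R^jf)$ at each stage; that all intermediate $\mathcal R^mf\in H^p$ for $1\le m\le n$ is guaranteed by applying the inversion formula above to $\mathcal R^mf$ itself. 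The one-step estimate in turn follows from the representation $\Delta_Ug(z)=\int_0^1dg(\gamma_tz)[(U-I)z]\,dt$ with $\gamma_t:=I+t(U-I)$ a linear contraction vanishing at~$0$: pointwise $|\Delta_Ug(z)|\le\delta\int_0^1|\nabla g(\gamma_tz)|\,dt$; Littlewood's subordination (using the plurisubharmonicity of $|\nabla g|^p$ together with $\gamma_t(0)=0$) gives $\||\nabla g|\circ\gamma_t\|_p\le\|\nabla g\|_p$; and the extended Ahern--Schneider inequality (Lemma~\ref{ahern2}) delivers $\|\nabla g\|_p\le C\|\mathcal Rg\|_p$.

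\textbf{The converse.} Assuming $\|\Delta_t^nf\|_p\le Ct^n$, fix $r\in(0,1)$; the termwise identity
\[
\frac{\Delta_t^nf_r(\zeta)}{(it)^n}=\sum_{k\ge1}r^kf_k(\zeta)\frac{(e^{ikt}-1)^n}{(it)^n}\;\longrightarrow\;(\mathcal R^nf_r)(\zeta)\quad(t\to0)
\]
converges pointwise on $\sn$, each summand being dominated by $r^kk^n|f_k(\zeta)|$ via $|e^{ikt}-1|\le k|t|$. The radial monotonicity of $M_p(\cdot,\Delta_t^nf)$ gives the uniform bound $\|\Delta_t^nf_r/(it)^n\|_p\le C$, so Fatou's lemma produces $M_p(r,\mathcal R^nf)=\|\mathcal R^nf_r\|_p\le C$ independently of~$r$, i.e.\ $\mathcal R^nf\in H^p$.
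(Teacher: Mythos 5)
Your argument is essentially correct, but it takes a genuinely different route from the paper's at every stage. The paper gets the inclusion $H^p_n\subset H^p$ from the dyadic decomposition $f=\sum W_\nu*f$ (Corollary 2.5), whereas you invert $\mathcal R^n$ directly with the kernel $(\log 1/t)^{n-1}$ and a Schwarz-type slice estimate; both work, yours being more elementary and self-contained. For $f\in H^p_n\Rightarrow\omega_n^+(\delta,f)_p=\mathcal O(\delta^n)$ the paper proves the sharper integral bound of Theorem 1.6, $\omega_n^+(\delta,f)_p\le C\int_{1-\delta}^1(1-r)^{n-1}M_p(r,\mathcal R^nf)\,dr$, by splitting $f=f_r+(f-f_r)$ and controlling $\Delta^n_Uf_r$ via the non-tangential maximal function (Lemmas 5.5 and 5.6); you instead iterate a one-step estimate $\|\Delta_Ug\|_p\le C\delta\|\mathcal Rg\|_p$, using the same commutation $\mathcal R\Delta_U=\Delta_U\mathcal R$ as the paper's Lemma 5.3. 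Your version is shorter but yields only the Hardy--Sobolev endpoint, while the paper's integral form is also what drives Theorems 1.2 and 1.3. For the converse you replace the differential-inequality machinery of Theorem 1.5 (inequality (1.12)) by a Fatou argument on the quotients $\Delta^n_tf_r/(it)^n$; that argument is correct and arguably cleaner.

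One step needs more care before the proof is complete. You justify $\|\,|\nabla g|\circ\gamma_t\|_p\le\|\nabla g\|_p$ by appealing to ``Littlewood's subordination,'' but that principle is a one-variable theorem; for general holomorphic self-maps of $\bn$ fixing the origin the analogous inequality is \emph{false} (composition operators on $H^p(\bn)$ need not be bounded). For the linear contractions $\gamma_t=(1-t)I+tU$ the inequality is true, but it must be proved: write $\gamma_t=U_1DU_2$ with $U_1,U_2$ unitary and $D=\mathrm{diag}(s_1,\dots,s_N)$, $0\le s_j\le 1$; unitaries preserve $\sigma$, and for $D$ note that $\lambda\mapsto\int_{\sn}|\nabla g(r\lambda_1\zeta_1,\dots,r\lambda_N\zeta_N)|^p\,d\sigma(\zeta)$ is plurisubharmonic on the closed polydisc and invariant under each rotation $\lambda_j\mapsto e^{i\theta_j}\lambda_j$, hence increasing in each $|\lambda_j|$, so its value at $(s_1,\dots,s_N)$ is at most its value at $(1,\dots,1)$, which is $M_p^p(r,\nabla g)$. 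The same remark is needed to see that $\|h\circ U^m\|_p\le\|h\|_p$ for $U\in\mathcal L$, which you use implicitly when bounding $\|\Delta_U^{n-j}\mathcal R^jf\|_p$ at the intermediate stages of the induction. With this lemma supplied, your proof is complete.
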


 The implication $(\Delta)\implies (\mathcal R)$ in Theorem 1.2
 follows from  the next theorem  that will be proved in Section 5.
 In $(1.11)$ below just take $\psi (t)=t^{q(n-\alpha ) -1}$, $0<t<1$,
 where $0<\alpha <n.$

\begin{thm}\label{main}
Let $f\in H^p(\bn),$  and let $\psi\in L^1(0,1)$ be a non-negative
function such that
\begin{equation}
\begin{aligned} \psi(2x)\le K\psi(x), \quad
0<x<1/2,
 \end{aligned}
 \end{equation}
 where $K$ is a positive constant. Then
 \begin{equation}\label{}\begin{aligned}
 \int_0^1 M_p^q(r, \nabla_n(f))\psi(1-r)\,dr \le C\int_0^1[t^{-n}\|\Delta^n_tf\|_p]^q\psi(t)\,dt,
  \end{aligned} \end{equation}
  where $C$ depends only on $K$, $p,$ $q,$ $n,$ and $N.$
 \end{thm}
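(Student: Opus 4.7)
The plan is: (a) replace $\nabla_n f$ on the left by the radial derivative $\mathcal R^n f$ via Lemma~\ref{ahern2}; (b) establish a pointwise inequality for $M_p(r,\mathcal R^n f)$ in terms of the $H^p$-norms of the differences $\Delta^n_t f$; (c) apply a weighted Hardy inequality, whose validity rests on the doubling condition imposed on $\psi$. Step (a) is routine: Lemma~\ref{ahern2} gives $M_p(r,\nabla_n f)\le CM_p(r,\mathcal R^n f)$ for $r\ge 1/2$, while the contribution from $r<1/2$ to the LHS is bounded by $C\|f\|_p^q$ (by Cauchy estimates), which is in turn absorbed into the RHS since $\|f\|_p$ is controlled by a single difference $\|\Delta^n_t f\|_p$ at $t$ of order $1$ (modulo finitely many low-degree homogeneous components, which vanish under $\mathcal R^n$ and are harmless).

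For (b), the key is to prove the pointwise estimate
\[
M_p(r,\mathcal R^n f)\ \le\ \frac{C\,\|\Delta^n_{1-r}f\|_p}{(1-r)^n}+C\int_{1-r}^1\frac{\|\Delta^n_tf\|_p}{t^{n+1}}\,dt.
\]
Writing the homogeneous expansion $f=\sum_k f_k$, we have $\mathcal R^n f(r\zeta)=\sum_k k^n r^k f_k(\zeta)$ and $\Delta^n_t f(r\zeta)=\sum_k(e^{ikt}-1)^n r^k f_k(\zeta)$, so that both operators are diagonal Fourier multipliers in $k$. The task thus reduces to constructing an inversion kernel $K_r$ on $(0,1)$, satisfying $|K_r(t)|\le Ct^{-n-1}$ on $(1-r,1)$, such that
\[
\int_0^1 K_r(t)\bigl(e^{ikt}-1\bigr)^n\,dt\ =\ k^n r^k+\rho_k(r),
\]
with a remainder $\rho_k(r)$ whose contribution to the $M_p$-norm is controlled by $(1-r)^{-n}\|\Delta^n_{1-r}f\|_p$. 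Once $K_r$ is in hand, the identity $\mathcal R^n f(r\zeta)=\int K_r(t)\,\Delta^n_t f(r\zeta)\,dt+E(r\zeta)$ followed by Minkowski's integral inequality on $\sn$ yields the displayed pointwise bound. This kernel is the multivariate counterpart of the one-dimensional Fourier inversion implicit in \cite{15,16}, and because $\mathcal R^n$ and $\Delta^n_t$ are diagonal on the homogeneous expansion, the scalar-level construction transplants directly to $\bn$.

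For step (c), raise the pointwise bound to the $q$-th power, multiply by $\psi(1-r)$, and integrate over $r\in(0,1)$. The first term contributes $C\int_0^1\bigl[t^{-n}\|\Delta^n_t f\|_p\bigr]^q\psi(t)\,dt$ after the substitution $t=1-r$. For the second term, setting $h(t)=t^{-n}\|\Delta^n_t f\|_p$, the task reduces to the weighted Hardy inequality
\[
\int_0^1\psi(s)\Bigl(\int_s^1 h(t)\,\frac{dt}{t}\Bigr)^q ds\ \le\ C\int_0^1\psi(t)\,h(t)^q\,dt,
\]
which is standard under $\psi(2x)\le K\psi(x)$ via a dyadic Minkowski-type argument: split $\int_s^1$ into geometric annuli $[2^js,2^{j+1}s]$, use $\psi(s)\le K^j\psi(2^j s)$ on each piece, and sum. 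The genuinely hard step is the construction of $K_r$ in (b) with the required size control; once the one-dimensional inversion formula from \cite{15,16} is at our disposal, the rest of the proof is essentially change of variables and an application of Minkowski's and Hardy's inequalities.
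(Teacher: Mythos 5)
Your route (a pointwise Marchaud-type bound for $M_p(r,\mathcal R^nf)$ in terms of the differences, followed by a weighted Hardy inequality) is genuinely different from the paper's, which never proves any pointwise inversion formula: the paper establishes the local recursion $M_p(r,\mathcal R^nf)\le t^{-n}\|\Delta^n_tf\|_p+C_0C_1\,t(1-r)^{-1}M_p((1+r)/2,\mathcal R^nf)$ for $0<t\le 1-r$, by writing $\Delta^n_t$ of the slice function $h(\theta)=f(re^{i\theta}\zeta)$ as an $n$-fold integral of $h^{(n)}$, and then absorbs the second term only after multiplying by $\psi(1-r)$ and integrating in $r$ (taking $t=(1-r)/2^m$ with $m$ large so that the recursive term carries a factor $\le 1/4$). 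Your step (b) is the heart of the theorem and is not proved: everything is reduced to the existence of a kernel $K_r$ with $|K_r(t)|\le Ct^{-n-1}$ inverting the multipliers $(e^{ikt}-1)^n\mapsto k^nr^k$ with a controlled remainder, and you only assert that such a kernel is ``implicit'' in \cite{15,16}. It is not: the estimate actually available there (and stated here as $(1.12)$) is $M_p(r,\mathcal R^nf)\le C(1-r)^{-n-1}\int_0^{1-r}\|\Delta^n_tf\|_p\,dt$, an average over scales $t\le 1-r$, not a Marchaud-type integral over $t\ge 1-r$. So the central analytic content is missing.

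Step (c) is moreover false as stated. The hypothesis $\psi(2x)\le K\psi(x)$ yields $\psi(2^js)\le K^j\psi(s)$; your dyadic argument uses the reverse inequality $\psi(s)\le K^j\psi(2^js)$, which does not follow. Concretely, $\psi=\chi_{(0,1/2)}$ satisfies the doubling hypothesis with $K=1$, yet for $h$ supported in $(1/2,1)$ the right-hand side of your Hardy inequality $\int_0^1\psi(s)\bigl(\int_s^1h(t)\,dt/t\bigr)^q\,ds\le C\int_0^1\psi(t)h(t)^q\,dt$ vanishes while the left-hand side does not. This is not a peripheral case: the truncated weights $\psi=t^{\alpha}\chi_{(0,1-r)}$ are precisely the ones the paper feeds into Theorem 1.5 to obtain Corollaries 5.2 and 5.3, hence $(1.12)$ and the implication $(\Delta)\Rightarrow(\mathcal R)$ in Theorem 1.3. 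The structural reason your route breaks here is that your pointwise bound involves $\|\Delta^n_tf\|_p$ at scales $t\ge 1-r$, possibly outside the support of $\psi$, whereas the one-sided doubling only controls $\psi$ when passing from larger to smaller arguments; the paper's estimate, involving only scales $t\le 1-r$, is exactly what makes the weighted integration compatible with the hypothesis. (Your step (a) also needs repair --- a single difference $\|\Delta^n_tf\|_p$ with $t\asymp 1$ does not control $\|f\|_p$, since $\Delta^n_tf_k=0$ whenever $kt\in 2\pi\mathbb{Z}$ --- but the small-$r$ contribution can be handled by monotonicity of the means together with doubling, so that gap is minor.)
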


 The implication $(\Delta)\implies (\mathcal R)$ in Theorem 1.3
  follows from the estimate
 \begin{equation}\label{}
 \begin{aligned} M_p(r,\nabla_n(f))\le C(1-r)^{-n-1}\int_0^{1-r}||\Delta ^n_tf||_pdt,
  \end{aligned}
  \end{equation}
 which is a consequence of  Theorem 1.5. (See Section 5).

The implications $(\omega^+)\implies (\omega )\implies
(\omega^{-})\implies (\Delta )$ follow from \eqref{8} and the
definition of $(\omega^{-}).$ To finish the proofs of Theorem 1.2
and Theorem 1.3 it remains to prove  the implication $(\mathcal R
)\implies (\omega^{+}).$ This follows from the following theorem.

\begin{thm}\label{th-1}
 If $f\in H(\bn)$ and
\begin{equation}\label{}
    \int_0^1 (1-r)^{n-1}M_p(r,\mathcal R^nf)\,dr\ <\infty,
\end{equation}
then $f\in H^p,$ and
\begin{equation}\label{-}\begin{aligned}
\omega_n^+(\delta,f)_p\le C\int_{1-\delta}^1M_p(r,\mathcal
R^nf)(1-r)^{n-1}\,dr,\quad 0<\delta<1.
 \end{aligned} \end{equation}
\end{thm}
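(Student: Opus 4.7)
The plan is to establish the theorem in two stages: first verifying $f \in H^p$, then proving the quantitative bound \eqref{-}. For the inclusion, I iterate the identity $f(z) - f(0) = \int_0^1 t^{-1}\mathcal R f(tz)\,dt$ a total of $n$ times, using $\mathcal R^k f(0) = 0$ for $k \ge 1$, to obtain
\[f(z) - f(0) = \frac{1}{(n-1)!}\int_0^1 (\log(1/t))^{n-1}\,\mathcal R^n f(tz)\,\frac{dt}{t}.\]
Taking $M_p(r,\cdot)$ and applying Minkowski yields $M_p(r, f-f(0)) \le C\int_0^1 (\log(1/t))^{n-1} M_p(rt,\mathcal R^n f)\,dt/t$. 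Since $(\log(1/t))^{n-1} \asymp (1-t)^{n-1}$ near $t=1$ and $M_p(rt,\mathcal R^n f)/t$ stays bounded near $t=0$ (as $\mathcal R^n f$ has no constant term), the hypothesis forces $\sup_{r<1} M_p(r,f) < \infty$.

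For \eqref{-}, I apply $\Delta_U^n$ in the variable $z$ to the above representation. Because $\Delta_U^n$ annihilates the constant $f(0)$ and commutes with the dilation $z \mapsto tz$ (as $U$ is linear), one gets
\[\Delta_U^n f(z) = \frac{1}{(n-1)!}\int_0^1 (\log(1/t))^{n-1}\,\Delta_U^n \mathcal R^n f(tz)\,\frac{dt}{t}.\]
Taking $L^p(\sn)$-norms, I split the integral at $t = 1-\delta$. On the boundary piece $t\in(1-\delta,1)$, I bound $|\Delta_U^n \mathcal R^n f(tz)| \le \sum_{j=0}^n \binom{n}{j}|\mathcal R^n f(tU^jz)|$ and use the composition-operator estimate $\int_\sn |\mathcal R^n f(tU^jz)|^p\,d\sigma(z) \le C M_p^p(t,\mathcal R^n f)$, which holds for $U\in\mathcal L$ by passing through the singular value decomposition of $U^j$ together with the subharmonicity of $|\mathcal R^n f|^p$; combined with $(\log(1/t))^{n-1} \asymp (1-t)^{n-1}$ on this interval, this delivers the contribution $C\int_{1-\delta}^1 M_p(r,\mathcal R^n f)(1-r)^{n-1}\,dr$ directly.

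On the interior piece $t\in(0,1-\delta)$, I exploit that $\Delta_U^n$ is an $n$-th order finite difference with $\|U-I\|<\delta$ in order to gain the factor $\delta^n$. Parametrizing via the linear path $V_s = I + s(U-I)\in\mathcal L$ and iterating Taylor's formula with integral remainder, I derive a pointwise bound of the form $|\Delta_U^n \mathcal R^n f(tz)| \le C\delta^n \sum_{k=1}^n |\nabla_k \mathcal R^n f(w_k)|$ for points $w_k$ with $|w_k|\le t$. A Cauchy-type interior estimate controls each $|\nabla_k \mathcal R^n f(w)|$, for $|w|\le t$, by $M_p(r,\mathcal R^n f)$ on a slightly larger sphere $|w|=r$ (roughly $r=(1+t)/2$), and Lemma \ref{ahern} then converts these gradient norms into radial-derivative norms. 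After integration and a change of variables, this interior contribution is absorbed into the same target integral $C\int_{1-\delta}^1 M_p(r,\mathcal R^n f)(1-r)^{n-1}\,dr$.

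The main obstacle is the interior estimate: extracting the full factor $\delta^n$ from $\Delta_U^n$ when $U$ is only a contraction in $\mathcal L$, because $U^j$ does not linearize as $I + j(U-I)$ for general $U$. This forces careful bookkeeping of the commutator-type corrections arising in the iterated Taylor expansion, and verification that all the resulting lower-derivative remainders, once integrated against $(\log(1/t))^{n-1}/t$ over $(0,1-\delta)$, still fit within the target boundary integral.
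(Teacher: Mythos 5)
Your reduction of the inclusion $f\in H^p$ to the iterated radial representation $f(z)-f(0)=\frac{1}{(n-1)!}\int_0^1(\log\frac1t)^{n-1}\mathcal R^nf(tz)\,\frac{dt}{t}$ is correct and more elementary than the paper's route (which deduces it from the $W_\nu$-decomposition, Corollary 2.5), and your treatment of the boundary piece $t\in(1-\delta,1)$ is sound once the composition bound $M_p(t,g\circ U)\le M_p(t,g)$ for $U\in\mathcal L$ is actually written out. The proof of \eqref{-} is not complete, however, and the problem is concentrated in the interior piece. First, the extraction of the factor $\delta^n$ from $\Delta_U^n$ for a general contraction $U$ is the heart of the theorem, and you leave it as an acknowledged obstacle rather than proving it; the paper's Lemma \ref{4.3} obtains it by induction on the order of the difference, using the commutation $\Delta_U\mathcal R=\mathcal R\Delta_U$ and, at each step, the mean value theorem together with the non-tangential maximal function and the maximal theorem --- this inductive route sidesteps the linearization of $U^j$ entirely, and it also repairs your step in which a \emph{pointwise} value $|\nabla_k\mathcal R^nf(w_k)|$ is controlled by an $L^p$-mean, which for $p<\infty$ is impossible without an extra loss of order $(1-t)^{-2N/p}$. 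Second, and decisively, even granting the pointwise $\delta^n$-bound, your scheme cannot close: applying $\Delta_U^n$ to $\mathcal R^nf$ produces $\nabla_n\mathcal R^nf$, and converting this back to $M_p(\cdot,\mathcal R^nf)$ costs $(1-t)^{-n}$, so your interior contribution is bounded only by a constant times $\delta^n\int_0^{1-\delta}M_p\bigl(\tfrac{1+t}{2},\mathcal R^nf\bigr)\frac{dt}{1-t}$, which for $M_p(r,\mathcal R^nf)\asymp 1$ (e.g.\ $f(z)=z_1^{n+1}$) is of order $\delta^n\log(1/\delta)$, whereas the right-hand side of \eqref{-} is of order $\delta^n$. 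The paper avoids this logarithmic loss by splitting $f=f_{1-\delta}+(f-f_{1-\delta})$ and applying the $\delta^n$-gain to the dilated function itself, $\|\Delta_U^nf_{1-\delta}\|_p\le C\delta^nM_p(1-\delta,\mathcal R^nf)$, gaining the factor $\delta^n$ and the $n$ radial derivatives simultaneously so that no derivatives need to be undone; the tail $f-f_{1-\delta}$ is then handled by the radial representation over $(1-\delta,1)$ only. You should restructure your interior piece along these lines.
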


The proof that $(1.13)$ implies $f\in H^p$ will be given in
Section 2 (Corollary 2.5), and the proof  of $(1.14)$ will be
given in Section 5.

In Section 2 (Corollary 2.4) we prove that $\Lambda^{p,q}_{\alpha
}\subset H^p$. Since the condition $f\in H^p_n$ implies $(1.13)$,
we have $H^p_n\subset H^p$. To finish the proof of Theorem 1.4
apply $(1.12)$ and $(1.14)$.

In the case $N=1,$  Theorem 1.6 was proved in \cite[Theorem
2.2]{16}. Similar theorems appear in \cite{5} in the case where
$n=1,2.$ For example,  Theorems 5.2 and 5.3 of \cite{5}  give
$$
\begin{aligned} \omega_2(\delta,f)_p\le
C\delta^2M_p(1-\delta,\mathcal
R^2f)+C\int_{1-\delta}^1(1-r)M_p(r,\mathcal R^2f)\,dr.
 \end{aligned}
 $$
 Here we observe that the  summand $\delta^2M_p(1-\delta,\mathcal R^2f)$
 is not needed because,
  by the increasing property of the integral means,
 \[\begin{aligned} \delta^2M_p(1-\delta,\mathcal R^2f)\le 2\int_{1-\delta}^1
 (1-r)M_p(r,\mathcal R^2f)\,dr.  \end{aligned} \]
 As a consequence  we note a generalization of \cite[Theorem~ 5.3]{5}:
 \begin{cor}\label{}
 We have
 \begin{equation}\label{strong}\begin{aligned}
 \omega^+_n(\delta,f_{1-\delta})_p\le C\delta^nM_p(1-\delta,\mathcal R^nf),\quad 0<\delta<1.
  \end{aligned} \end{equation}
  \end{cor}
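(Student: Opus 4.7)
The plan is to reduce the corollary directly to Theorem \ref{th-1} applied to the dilated function $g(z) := f_{1-\delta}(z) = f((1-\delta)z)$, and then exploit the monotonicity of $r \mapsto M_p(r, \mathcal R^n f)$.

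First I would record how $\mathcal R^n$ interacts with the radial dilation. If $f = \sum_{k=0}^\infty f_k$ is the homogeneous expansion of $f$, then $g(z) = \sum_{k=0}^\infty (1-\delta)^k f_k(z)$, so
$$
\mathcal R^n g(z) = \sum_{k=1}^\infty k^n (1-\delta)^k f_k(z) = (\mathcal R^n f)((1-\delta) z),
$$
and integrating over the sphere $|z|=r$ gives $M_p(r, \mathcal R^n g) = M_p((1-\delta)r, \mathcal R^n f)$. Since $(1-\delta) r < 1$ for every $r \in (0,1)$ and $\mathcal R^n f \in H(\bn)$, this quantity is uniformly bounded in $r$, so $g$ satisfies the integrability hypothesis $(1.13)$ of Theorem \ref{th-1}.

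Next I would apply the estimate $(1.14)$ of Theorem \ref{th-1} to $g$ in place of $f$, which yields
$$
\omega_n^+(\delta, g)_p \le C \int_{1-\delta}^1 M_p((1-\delta)r, \mathcal R^n f)(1-r)^{n-1}\,dr.
$$
Now I would invoke the standard monotonicity of integral means, namely $M_p(s, F) \le M_p(s', F)$ for $s \le s'$ and $F \in H(\bn)$. Since $(1-\delta)r \le 1-\delta$ throughout the range of integration, this bounds the integrand by $M_p(1-\delta, \mathcal R^n f)(1-r)^{n-1}$, and pulling the first factor out leaves the elementary integral $\int_{1-\delta}^1 (1-r)^{n-1}\,dr = \delta^n/n$, giving
$$
\omega_n^+(\delta, f_{1-\delta})_p \le \tfrac{C}{n}\,\delta^n M_p(1-\delta, \mathcal R^n f).
$$

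The only point that requires any care is the commutation of $\mathcal R^n$ with the dilation, but the homogeneous-expansion identity above makes this immediate; beyond that there is no substantive obstacle, since Theorem \ref{th-1} does the heavy lifting.
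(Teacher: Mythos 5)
Your proof is correct and follows essentially the same route as the paper: apply relation $(1.14)$ of Theorem \ref{th-1} to the dilated function $f_\rho$ and then use the increasing property of the integral means, taking $\rho=1-\delta$. The only difference is that you spell out the commutation $\mathcal R^n f_\rho=(\mathcal R^nf)_\rho$ and the verification of hypothesis $(1.13)$, details the paper leaves implicit.
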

  \begin{proof}
  It follows from the Theorem 1.6 (relation $(1.14)$), applied to the function
   $f_\rho(z)=f(\rho z)$, and the
   increasing property of the integral means that
  $$
  \begin{aligned}
  \omega^+_n(\delta,f_\rho)_p\le C\delta^nM_p(\rho,\mathcal R^nf), \quad 0<\rho<1, \ 0<\delta<1.
   \end{aligned}
   $$
   Now \eqref{strong} is obtained by taking $\rho=1-\delta.$
   \end{proof}
   \begin{rem}\label{}
   We will use a weaker variant of \eqref{strong} in the proof of Theorem \ref{th-1}
    (see Lemma \ref{4.3}).
     \end{rem}

 In order to give a further  consequence of  Theorem \ref{th-1}, let $\phi>0$ be an
continuous  increasing function on the interval
 $(0,1]$ and define the spaces $\Lambda^{p}_{n,\phi}(\bn)$ and ${\rm Lip}^p_{n,\phi}(\bn)$
  in the following way:

 The space $\Lambda^{p}_{n,\phi}(\bn)$ consists of those $f\in H(\bn)$ for which
 \begin{equation}\label{}\begin{aligned} M_p(r,\mathcal R^nf)=\mathcal O\bigg(\frac{\phi(1-r)}
 {(1-r)^{n}}\bigg), \quad r\to 1^-. \end{aligned} \end{equation}
 The space ${\rm Lip}^p_{n,\phi}(\bn)$ consists of those $f\in H^p(\bn) $ for which
 \begin{equation}\label{}\begin{aligned}
 \omega_n^+(\delta,f)_p=\mathcal O(\phi(\delta)), \quad \delta\to 0^+.
  \end{aligned} \end{equation}
  The little ``oh" spaces $\lambda^{p}_{n,\phi}$ and ${\rm lip}^p_{n,\phi}$ are defined by replacing
   ``$\mathcal O$" with ``o". The condition $t^n=\mathcal O (\phi(t))$ guarantees that the space
   $\Lambda^{p}_{n,\phi}$ is of infinite dimension; in fact then $\Lambda^{p}_{n,\phi}$
   contains the Hardy-Sobolev space $H^p_n(\bn).$
  This condition should be strengthened to $t^n=o(\phi(t)),$ $t\to 0^+$, if we want
  $ \lambda^{p}_{n,\phi}(\bn)$ to be infinite-dimensional.
  \begin{cor}\label{}
  If the function $x\mapsto \phi(x)/x^\alpha$ increases on $(0,1]$ for some $0<\alpha <n,$ then
  $\Lambda^{p}_{n,\phi}(\bn)\subset {\rm Lip}^p_{n,\phi}(\bn)$ and $\lambda^{p}_{n,\phi}
  (\bn)\subset {\rm lip}^p_{n,\phi}(\bn).$
   \end{cor}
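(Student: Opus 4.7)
The plan is to use Theorem~\ref{th-1} (more precisely, the estimate \eqref{-}) as the only nontrivial input, and to extract the conclusion from the pointwise bound on $M_p(r,\mathcal R^nf)$ that $f\in\Lambda^p_{n,\phi}(\bn)$ provides, together with the growth condition on $\phi$.

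First I would verify that $f\in\Lambda^p_{n,\phi}(\bn)$ satisfies the hypothesis \eqref{} of Theorem~\ref{th-1}, so that \eqref{-} is available. Since $x\mapsto \phi(x)/x^\alpha$ is increasing on $(0,1]$, for $0<t\le 1$ one has $\phi(t)\le \phi(1)\,t^\alpha$. Combined with \((1.16)\) this gives $M_p(r,\mathcal R^nf)\le C(1-r)^{\alpha-n}$, so the integrand in \((1.13)\) is dominated by $C(1-r)^{\alpha-1}$, which is integrable because $\alpha>0$. Hence Theorem~\ref{th-1} applies and $f\in H^p$.

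Next, plugging the bound $M_p(r,\mathcal R^nf)\le C\phi(1-r)/(1-r)^n$ into \eqref{-} and changing variables $t=1-r$ yields
\[
\omega_n^+(\delta,f)_p\le C\int_{1-\delta}^1 \frac{\phi(1-r)}{1-r}\,dr = C\int_0^\delta \frac{\phi(t)}{t}\,dt.
\]
The key step is then the inequality $\int_0^\delta \phi(t)/t\,dt\le C\phi(\delta)$, and this is exactly where the hypothesis on $\phi$ is used: since $\phi(t)/t^\alpha\le \phi(\delta)/\delta^\alpha$ for $t\le \delta$, we get
\[
\int_0^\delta \frac{\phi(t)}{t}\,dt \le \frac{\phi(\delta)}{\delta^\alpha}\int_0^\delta t^{\alpha-1}\,dt = \frac{\phi(\delta)}{\alpha}.
\]
This gives $\omega_n^+(\delta,f)_p = \mathcal O(\phi(\delta))$, i.e.\ $f\in\mathrm{Lip}^p_{n,\phi}(\bn)$.

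For the little-oh inclusion, I would run the same computation with the standard $\varepsilon$-splitting. Given $f\in\lambda^p_{n,\phi}(\bn)$ and $\varepsilon>0$, pick $\eta\in(0,1)$ with $M_p(r,\mathcal R^nf)(1-r)^n/\phi(1-r)<\varepsilon$ for $1-r<\eta$. Then for $\delta<\eta$ the whole interval of integration in \eqref{-} lies in the regime where the stronger bound holds, and the same computation yields $\omega_n^+(\delta,f)_p\le (C/\alpha)\,\varepsilon\,\phi(\delta)$, proving $f\in\mathrm{lip}^p_{n,\phi}(\bn)$. The main (and only) obstacle is recognizing that the monotonicity of $\phi(x)/x^\alpha$ plays the double role of (i) providing the polynomial majorization needed for integrability in \((1.13)\) and (ii) letting one absorb the logarithmic-type integral $\int_0^\delta \phi(t)/t\,dt$ into $\phi(\delta)$; everything else is a direct substitution into Theorem~\ref{th-1}.
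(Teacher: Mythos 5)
Your proposal is correct and follows essentially the same route as the paper: both rest on the estimate \eqref{-} of Theorem~\ref{th-1} together with the bound $\int_0^\delta \phi(t)t^{-1}\,dt=\int_0^\delta(\phi(t)t^{-\alpha})t^{\alpha-1}\,dt\le \alpha^{-1}\phi(\delta)$ coming from the monotonicity of $\phi(x)/x^\alpha$. The only cosmetic difference is that the paper obtains $f\in H^p$ by first noting $\phi(t)\le\phi(1)t^\alpha$ implies $f\in\Lambda^p_\alpha$ and invoking Corollary 2.4, whereas you verify the integrability hypothesis $(1.13)$ directly; and you spell out the little-oh case, which the paper dismisses with ``the rest is proved similarly.''
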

   \begin{proof}
   If $f\in \Lambda^{p}_{n,\phi},$ then $f\in \Lambda^p_\alpha$ (because
   $\phi(t)\le \phi(1)t^\alpha$) and hence
   $f\in H^p(\bn)$ (see Corollary 2.4). Next,
   \[\begin{aligned}
   \omega_n^{+}(\delta,f)_p&\le C\int_{0}^\delta\frac{\phi(x)}x\,dx\\&=
   C\int_0^\delta \frac{\phi(x)}{x^\alpha}x^{\alpha-1}\,dx\\&
   \le C\frac{\phi(\delta)}{\delta^\alpha}\frac{\delta^\alpha}\alpha,
    \end{aligned} \]
    and hence $f\in {\rm Lip}^p_{n,\phi}$. The rest is proved similarly.
    \end{proof}

Note that ${\rm Lip}^p_{n,\phi }(\bn )\subset \Lambda^p_{n,\phi
}(\bn ).$ Use $(1.12)$ and $(1.16).$

The above corollary can be generalized to integrated
Besov-Lipschitz spaces. Let  $\Lambda^{p,q}_{n,\phi}(\bn),$ $1\leq
q<\infty,$ denote the class of those $f\in H(\bn)$ for which
\begin{equation}\label{}\begin{aligned}
\int_0^1\bigg(M_p(r,\mathcal R^nf)\frac{(1-r)^n}{\phi(1-r)}\bigg)^q\frac{dr}{1-r}\ <\infty.
 \end{aligned} \end{equation}
   The space $ {\rm Lip}^{p,q}_{n,\phi}(\bn)$ is the subclass of $H^p(\bn)$ for which
   \begin{equation}\label{}\begin{aligned}
   \int_0^1\bigg(\frac{\|\Delta^n_t f\|_p}{\phi(t)}\bigg)^q\frac{dt}{t}\ <\infty.
     \end{aligned} \end{equation}
     Suppose
     \begin{equation}\label{}\begin{aligned}
     \int_0^1\bigg(\frac{t^n}{\phi(t)}\bigg)^q\,\frac{dt}t\ <\infty,
      \end{aligned} \end{equation}
      which implies that ${\Lambda}^{p,q}_{n,\phi}(\bn)$
      is infinite-dimensional (in fact, it contains all polynomials).
If $(1.20)$ is satisfied then ${\rm Lip}^{p,q}_{n,\phi }(\bn
)\subset \Lambda^{p,q}_{n,\phi }(\bn ).$ This fact is an easy
consequence of Theorem 1.5. (Use (1.11),(1.18) and (1.19).)

 \begin{cor}\label{}
      If the function $x\mapsto \phi(x)/x^\alpha$ increases on $(0,1]$ for some $\alpha>0,$ then
      $\Lambda^{p,q}_{n,\phi}(\bn)\subset {\rm Lip}^{p,q}_{n,\phi}(\bn).$
\end{cor}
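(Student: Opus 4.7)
My plan is to combine the integral bound from Theorem~\ref{th-1} with a classical Hardy inequality on the multiplicative group, using the monotonicity of $\phi(x)/x^\alpha$ as the bridge between the weights appearing in $(1.18)$ and $(1.19)$.

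Before the main estimate I would dispose of the requirement $f\in H^p(\bn)$ built into the definition of ${\rm Lip}^{p,q}_{n,\phi}$. Without loss of generality I may assume $\alpha<n$: if $\alpha\ge n$, any $\alpha'\in(0,n)$ also satisfies the hypothesis, since $\phi(x)/x^{\alpha'}=\bigl(\phi(x)/x^\alpha\bigr)x^{\alpha-\alpha'}$ is a product of two positive increasing functions on $(0,1]$. The monotonicity then gives $\phi(x)\le\phi(1)x^\alpha$, so the weight $(1-r)^n/\phi(1-r)$ in $(1.18)$ dominates a constant multiple of $(1-r)^{n-\alpha}$; hence $(1.18)$ forces $f\in\Lambda^{p,q}_\alpha(\bn)\subset H^p(\bn)$ (by Corollary~2.4).

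For the core of the argument, Theorem~\ref{th-1} and $\|\Delta_t^nf\|_p\le\omega_n^+(t,f)_p$, after the substitution $u=1-r$, give
\[
\|\Delta_t^nf\|_p\le C\int_0^t M_p(1-u,\mathcal R^nf)\,u^{n-1}\,du.
\]
Setting $g(u):=M_p(1-u,\mathcal R^nf)\,u^n/\phi(u)$, hypothesis $(1.18)$ reads $\int_0^1 g(u)^q\,du/u<\infty$. For $u\le t$ the assumed monotonicity gives $\phi(u)\le\phi(t)(u/t)^\alpha$; inserting this in the display above yields
\[
\frac{\|\Delta_t^nf\|_p}{\phi(t)}\le \frac{C}{t^\alpha}\int_0^t g(u)\,u^{\alpha-1}\,du.
\]
The conclusion $(1.19)$ is then immediate from the Hardy-type inequality
\[
\int_0^1\biggl[\frac{1}{t^\alpha}\int_0^t g(u)u^{\alpha-1}\,du\biggr]^q\frac{dt}{t}\le \frac{1}{\alpha^q}\int_0^1 g(u)^q\frac{du}{u},
\]
which one obtains by substituting $u=ts$ and invoking Minkowski's integral inequality in $L^q(dt/t)$.

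The only mildly delicate point is the bookkeeping of weights: the factor $u^{n-1}\,du$ produced by Theorem~\ref{th-1} has to be reconciled with the Haar-type measure $du/u$ used in $(1.18)$ and $(1.19)$, and the monotonicity of $\phi(x)/x^\alpha$ is precisely what closes the gap. Once Theorem~\ref{th-1} is invoked, the reduction to Hardy's inequality is entirely standard.
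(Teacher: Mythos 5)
Your argument is correct, and it rests on the same pillar as the paper's: the estimate $(1.14)$ of Theorem 1.6, which bounds $\|\Delta^n_tf\|_p\le\omega_n^+(t,f)_p$ by $C\int_{1-t}^1(1-r)^{n-1}M_p(r,\mathcal R^nf)\,dr$. Where you diverge is in how the resulting integral operator is shown to be bounded between the weighted spaces $(1.18)$ and $(1.19)$. You apply the monotonicity of $\phi(x)/x^\alpha$ \emph{first}, inside the $t$-integral, to reduce everything to the classical Hardy averaging operator $g\mapsto t^{-\alpha}\int_0^t g(u)u^{\alpha-1}\,du$ on $L^q(du/u)$, which you then control by the substitution $u=ts$ and Minkowski's integral inequality. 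The paper instead raises the inner integral to the $q$-th power via Jensen's inequality against the probability measure $\alpha(1-r)^{\alpha-1}t^{-\alpha}\,dr$ on $[1-t,1]$, interchanges the order of integration by Fubini, and only then invokes $t^{\alpha}/\phi(t)\le(1-r)^{\alpha}/\phi(1-r)$ to evaluate the inner $t$-integral. The two routes are essentially dual realizations of the same weighted Hardy inequality; yours has the advantage of isolating that inequality as a clean, reusable statement, while the paper's Jensen--Fubini computation is self-contained and avoids naming it. Your preliminary reduction to $\alpha<n$ (needed so that $\Lambda^{p,q}_{n,\phi}\subset\Lambda^{p,q}_\alpha\subset H^p$ via Theorem 2.2 and Corollary 2.4) is a point the paper passes over silently, and it is worth making explicit as you do.
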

   \begin{proof}
   Since $\phi(t)\le \phi(1)t^\alpha,$ we have $\Lambda^{p,q}_{n,\phi}\subset \Lambda^{p,q}_{\alpha}\subset H^p$.

Let $f\in \Lambda^{p,q}_{n,\phi }$. We  prove that $f\in {\rm
Lip}^{p,q}_{n,\phi }.$ By Jensen's inequality,
$$
\biggl (\int_{1-t}^1(1-r)^{n-1 }M_p(r,\mathcal R^nf)\frac {\alpha
dr}{ t^{\alpha }}\biggr )^q\leq \int_{1-t}^1(1-r)^{q(n-\alpha
)+\alpha -1}M_p(r,\mathcal R^nf)^q\frac {\alpha dr}{ t^{\alpha }}.
$$
From this and $(1.14)$ we have
$$
\begin{aligned}
&\int_0^1\biggl (\frac {||\Delta^n_tf||_p}{\phi (t)}\biggr
)^q\frac {dt}{t}\leq \int_0^1\biggl (\frac
{\omega^{+}_n(t,f)_p}{\phi (t)}\biggr )^q\frac {dt}{t}\\
&\leq C\int_0^1\frac {t^{\alpha q-\alpha -1}}{(\phi (t))^q}\biggl
( \int_{1-t}^1(1-r)^{q(n-\alpha )+\alpha -1}M_p(r,\mathcal
R^nf)^qdr\biggr )dt\\
&=C\int_0^1(1-r)^{q(n-\alpha )+\alpha -1}M_p(r,\mathcal
R^nf)^q\biggl (\int_{1-r}^1\frac {t^{\alpha q -\alpha -1}}{\phi
(t)^q}dt\biggr )dr\\
\end{aligned}
$$
Using the inequality $\frac {t^{\alpha }}{\phi (t)}\leq \frac
{(1-r)^{\alpha }}{\phi (1-r)},$ $1-r\leq t$, one shows that the
inner integral is dominated by $\frac {C(1-r)^{\alpha q-\alpha
}}{(\phi (1-r))^q},$ which completes the proof.

\end{proof}

\section{Finite-dimensional decomposition and applications}

Let $\psi:(0,\infty)\mapsto \mathbb C$ be a $C^\infty$ function with
 compact support in $(0,\infty).$ Consider the polynomials
$$
\begin{aligned}
\Omega_\nu(z)=\Omega_\nu^\psi(z)=\sum_{j=0}^\infty
\psi\bigg(\frac{j}{2^{\nu-1}}\bigg)z^j, \quad z\in \mathbb B_1,
\quad \nu\ge 1.
\end{aligned}
$$
 It is proved in  \cite[Theorem 7.3.4]{18} that
  \begin{equation}\label{elem}\begin{aligned}
 \|\Omega_\nu*f\|_p\le C\|f\|_p,\quad f\in H^p(\mathbb B_N),
  \end{aligned} \end{equation}
  in the case $N=1.$ (Theorem 7.3.4 of \cite{18} says much more than we need. Inequality~
   \eqref{elem}  is rather elementary, see \cite[Lemma 7.3.2]{18} or \cite{10}.)
   The Hadamard product of a functions $f\in H^p(\bn)$ and $g\in H^p(\mathbb B_1)$ is defined as
   \[g*f(z)=\sum_{k=0}^\infty \hat g(k)f_k(z),\]
   where $f=\sum_{k=0}^{\infty } f_k$ is the homogeneous expansion of $f$.

Integration by slices shows that \eqref{elem} holds for all $N.$
Using this one can choose
 $\psi (t)=\omega (t/2)-\omega (t)$, where  $\omega (t)$ is any
infinitely differentiable function with $\omega (t)=1$ for $t\leq
1$, $0\leq \omega (t)\leq 1$ for $1<t\leq 2$ and $\omega (t)=0$
for $t>2$,
 so that the polynomials $\Omega_\nu^\psi=:W_\nu$ satisfies:
$$
\begin{aligned}
 \mathop{\rm supp}\widehat W_\nu\subset
[2^{\nu-1},2^{\nu+1}), \quad \nu\ge 1,
  \end{aligned}
  $$
\begin{equation}\label{from}\begin{aligned} f(z)=\sum_{\nu=0}^\infty W_\nu*f(z),
\quad f\in H(\mathbb B_N), \end{aligned} \end{equation}
where
\[W_0(z)=1+z,\]
and
\begin{equation}\label{2.5}\begin{aligned}
\|W_\nu*f\|_p\le C\|f\|_p, \quad f\in H(\mathbb B_N).
 \end{aligned} \end{equation}
(See, e.g. \cite{11}.)

We define  $\mathcal B^{p,q}_\beta = \mathcal B^{p,q}_\beta
(\bn),$ $-\infty<\beta<\infty,$ to be the space of those $f\in
H(\mathbb B_N)$ for which
\begin{equation}\label{}\begin{aligned}
\|f\|_{\mathcal B^{p,q}_\beta}:=\|\{2^{-\nu\beta}\|W_\nu*f\|_p\}_{\nu=0}^\infty\|_{\ell^q}<\infty.
 \end{aligned} \end{equation}

The following theorem, for $N=1$, is proved in \cite{12}. A
similar argument shows that it holds for all $N\geq 1.$ See
\cite[Lemma~ 2.2]{11}.

\begin{other}\label{bergman} We have $H^{p,q,\alpha }(\bn)=\mathcal B^{p,q}_\alpha (\bn).$
 \end{other}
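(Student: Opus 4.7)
The plan is to prove the two inclusions $H^{p,q,\alpha} \subset \mathcal{B}^{p,q}_\alpha$ and $\mathcal{B}^{p,q}_\alpha \subset H^{p,q,\alpha}$ via a standard Littlewood-Paley argument, reducing both to the dyadic decomposition \eqref{from} combined with the multiplier estimate \eqref{2.5} and a dilation trick. The central observation is the following two-sided comparison between the block norms $\|W_\nu * f\|_p$ and the integral means $M_p(r,f)$ evaluated at dyadic radii $r_\nu = 1 - 2^{-\nu}$.

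For the first inclusion, the key is to establish
\[
\|W_\nu * f\|_p \leq C\, M_p(r_\nu, f), \quad \nu \geq 1,
\]
with $C$ independent of $\nu$ and $f$. To prove this, let $f_{r}(z) = f(rz)$, so $\|f_r\|_p = M_p(r,f)$. Choose a $C^\infty$ function $\tilde\psi$ compactly supported in $(0,\infty)$ with $\tilde\psi \equiv 1$ on the support of the window $\psi$ defining $W_\nu$. One then recovers $W_\nu * f$ from $W_\nu * f_{r_\nu}$ by applying the Hadamard convolution operator with symbol $\sigma_\nu(t) = r_\nu^{-t\cdot 2^{\nu-1}} \tilde\psi(t)$. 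As $\nu \to \infty$, $r_\nu^{-t\cdot 2^{\nu-1}} \to e^{t/2}$ uniformly on the support of $\tilde\psi$, so every $C^m$-norm of $\sigma_\nu$ is bounded uniformly in $\nu$. Hence by the $N$-variable version of \eqref{elem}, $\|W_\nu * f\|_p \leq C\, \|W_\nu * f_{r_\nu}\|_p \leq C'\, M_p(r_\nu, f)$. Since $M_p(\cdot,f)$ is increasing, a routine dyadic decomposition of the integral in \eqref{1} yields
\[
\sum_\nu 2^{-\nu q\alpha}\|W_\nu * f\|_p^q \leq C\,\|f\|_{p,q,\alpha}^q.
\]

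For the reverse inclusion, use \eqref{from} and, for $r \in [1-2^{-\mu}, 1-2^{-\mu-1}]$, estimate
\[
M_p(r, f) \leq \sum_{\nu \geq 0} M_p(r, W_\nu * f) \leq C\sum_{\nu \geq 0} \lambda_{\nu,\mu}\,\|W_\nu * f\|_p,
\]
where $\lambda_{\nu,\mu}$ is again a Hadamard-multiplier norm, this time of the symbol $r^{t\cdot 2^{\nu-1}} \tilde\psi(t)$. For $\nu \leq \mu$ one has $\lambda_{\nu,\mu} \leq C$ (using also the monotonicity $M_p(r, W_\nu * f) \leq \|W_\nu * f\|_p$), while for $\nu > \mu$ the factor $r^{t\cdot 2^{\nu-1}}$ is super-exponentially small on the support of $\tilde\psi$, giving $\lambda_{\nu,\mu} \leq C\exp(-c\, 2^{\nu-\mu})$. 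Integrating the $q$-th power of $M_p(r,f)$ against $(1-r)^{q\alpha-1}$ over dyadic annuli and applying a Schur-test/discrete Hardy inequality to the nearly-diagonal kernel $\{\lambda_{\nu,\mu}\}$ converts the result into $C\sum_\nu 2^{-\nu q\alpha}\|W_\nu * f\|_p^q = C\,\|f\|_{\mathcal{B}^{p,q}_\alpha}^q$.

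The main obstacle is the uniform control of the Hadamard-multiplier norms of the symbols $\sigma_\nu(t) = r^{\pm t\cdot 2^{\nu-1}} \tilde\psi(t)$ on $H^p(\bn)$. This requires knowing that the constant in \eqref{elem} depends only on a fixed $C^m$-norm of the window function, a fact one reads off from the proof of \cite[Lemma 7.3.2]{18} (and extends to general $N$ by integration in slices, exactly as for \eqref{2.5}). Once this uniformity is in hand, everything else is dyadic bookkeeping and standard Hardy-type inequalities, and the argument is identical in spirit to the $N=1$ case of \cite{12}.
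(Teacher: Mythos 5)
Your argument is correct and is essentially the argument the paper relies on: the paper gives no proof of Theorem 2.A, delegating it to \cite{12} (for $N=1$) and \cite[Lemma 2.2]{11}, and those proofs run exactly along your lines --- comparing $\|W_\nu*f\|_p$ with $M_p(1-2^{-\nu},f)$ via Hadamard multipliers with uniformly controlled $C^m$-norms, then doing the dyadic bookkeeping with a discrete Hardy/Schur estimate. The one point you rightly flag, that the constant in \eqref{elem} depends only on finitely many derivatives of the window and on its support, is indeed what makes the dilation trick work, and it does follow from the proof of \cite[Lemma 7.3.2]{18} together with integration by slices.
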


 \begin{thm}\label{besov} We have $\Lambda^{p,q}_\alpha=\mathcal B^{p,q}_{-\alpha}$.
 More generally, if $s>\alpha$ is any real number, then
  $$
  \begin{aligned}
 \{f\in H(\bn): \|\mathcal R^sf\|_{p,q,s-\alpha}<\infty\}=\mathcal B^{p,q}_{-\alpha},
   \end{aligned}
   $$
   and consequently
   \begin{equation}\label{zhu-cho3.11}\begin{aligned}
   \Lambda^{p,q}_\alpha= \{f\in H(\bn): \|\mathcal R^sf\|_{p,q,s-\alpha}<\infty\}.
     \end{aligned} \end{equation}
 \end{thm}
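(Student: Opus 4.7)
The plan is to reduce everything to Theorem 2.A (the Littlewood--Paley characterization of $H^{p,q,\alpha}$) applied to $g=\mathcal R^s f$ in the Bergman space $H^{p,q,s-\alpha}$. Since $s-\alpha>0$, Theorem~2.A gives
$$
\|\mathcal R^s f\|_{p,q,s-\alpha}<\infty \iff \mathcal R^s f\in \mathcal B^{p,q}_{s-\alpha}
\iff \{2^{-\nu(s-\alpha)}\|W_\nu * \mathcal R^s f\|_p\}_{\nu\ge 0}\in\ell^q.
$$
If I can show the Bernstein--type equivalence
\begin{equation}\label{bern}
\|W_\nu * \mathcal R^s f\|_p \asymp 2^{\nu s}\,\|W_\nu * f\|_p \qquad (\nu\ge 1),
\end{equation}
with constants independent of $\nu$ and $f$, then the expression above becomes comparable to $\{2^{\nu\alpha}\|W_\nu * f\|_p\}_\nu\in\ell^q$, which is exactly the statement $f\in\mathcal B^{p,q}_{-\alpha}$. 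The $\nu=0$ block contributes only a finite-dimensional correction (a constant and a linear part) and is absorbed in the implicit constant. Since this argument works for any real $s>\alpha$, and in particular for $s$ equal to the smallest integer greater than $\alpha$ (which is the case defining $\Lambda^{p,q}_\alpha$), we obtain $\Lambda^{p,q}_\alpha=\mathcal B^{p,q}_{-\alpha}=\{f\in H(\bn):\|\mathcal R^sf\|_{p,q,s-\alpha}<\infty\}$ for every real $s>\alpha$.

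The main point is therefore to prove \eqref{bern}, and this is where I expect the work to lie. The idea is to observe that $W_\nu * \mathcal R^s f = \mathcal R^s(W_\nu *f)$ is supported in Taylor frequencies $k\in[2^{\nu-1},2^{\nu+1})$, on which $k^s \asymp 2^{\nu s}$. To make this precise I pick $C^\infty$ functions $\eta^+$ and $\eta^-$ with compact support in $(0,\infty)$ satisfying
$$
\eta^+(t)=(t/2)^s, \qquad \eta^-(t)=(t/2)^{-s} \qquad \text{on }\operatorname{supp}\psi.
$$
With $G^{\pm}_\nu(z):=\sum_{j}\eta^{\pm}(j/2^{\nu-1})z^j=\Omega_\nu^{\eta^\pm}(z)$ a direct check of Taylor coefficients (using that $\hat W_\nu$ is supported on $\operatorname{supp}\psi$ rescaled) gives
$$
\mathcal R^s(W_\nu * f)=2^{\nu s}\,G^+_\nu * W_\nu * f, \qquad 2^{\nu s}\,W_\nu * f = G^-_\nu * \mathcal R^s(W_\nu * f)
$$
for $\nu\ge 1$. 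Applying the multiplier estimate \eqref{elem} to $\eta^+$ and $\eta^-$ (which yields uniform-in-$\nu$ bounds $\|G^{\pm}_\nu * h\|_p\le C\|h\|_p$) immediately produces \eqref{bern}.

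The main obstacle, such as it is, is bookkeeping: making sure the smooth extensions of $t^{\pm s}$ off $\operatorname{supp}\psi$ really give functions to which \eqref{elem} applies with constants independent of $\nu$, and handling the low-frequency term $\nu=0$ (where $W_0=1+z$ and $\mathcal R^s$ annihilates the constant) separately as a finite-dimensional correction. Once \eqref{bern} is in hand the proof is essentially a one-line manipulation of the weighted $\ell^q$ norm.
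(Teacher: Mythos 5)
Your proposal is correct and follows essentially the same route as the paper: the authors also reduce the theorem to Theorem 2.A combined with the Bernstein-type equivalence $\|W_\nu*\mathcal R^sf\|_p\asymp 2^{\nu s}\|W_\nu*f\|_p$ (their Lemma 2.3), which they prove by exactly your device of a smooth compactly supported symbol agreeing with a rescaled power of $t$ on the frequency support of $W_\nu$ (their $\varphi_s$ and $Q_\nu=W_{\nu-1}+W_\nu+W_{\nu+1}$ play the role of your $\eta^{\pm}$ and $G^{\pm}_\nu$), together with the uniform multiplier bound (2.1); the reverse inequality via $W_\nu*f=W_\nu*\mathcal R^{-s}(\mathcal R^sf)$ is the same as your $G^-_\nu$ step.
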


 \begin{rem}\label{}
 Relation \eqref{zhu-cho3.11} is proved in \cite[Theorem 3.11]{6} in a different manner.
  \end{rem}
\begin{proof}
 This is an immediate consequence of Theorem \ref{bergman} and the following lemma. \end{proof}
 \begin{lem}\label{WR}
 If $f\in H(\mathbb B_N)$,  and $s\in\mathbb C,$ then
 \begin{equation}\label{}\begin{aligned} \|W_\nu*\mathcal R^sf\|_p\asymp 2^{\nu\mathop{\rm Re}s}\|W_\nu*f\|_p,
  \quad \nu\ge2. \end{aligned} \end{equation}
   \end{lem}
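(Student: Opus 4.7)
The plan is to realize the multiplier $k\mapsto k^s$ on the dyadic block $[2^{\nu-1},2^{\nu+1})$ as a rescaled Hadamard multiplier coming from a smooth compactly supported bump, and then invoke the elementary $L^p$ bound \eqref{elem}. The point is that on the support of $\widehat{W_\nu}$ the ratio $k/2^\nu$ lies in a fixed compact subinterval of $(0,\infty)$, so $k^s=2^{\nu s}(k/2^\nu)^s$ with the second factor coming from a $C^\infty$ function of $k/2^\nu$.

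Concretely, I would fix once and for all a $C^\infty$ cutoff $\chi:(0,\infty)\to[0,1]$ with $\chi\equiv 1$ on $[1/2,2]$ and $\operatorname{supp}\chi\subset[1/4,4]$, and set $\phi(x)=\chi(x)\,x^s$. Since $x\mapsto x^s$ is $C^\infty$ on $(0,\infty)$ for every $s\in\mathbb C$, the function $\phi$ is smooth with compact support in $(0,\infty)$. Define
$$P_\nu(z)=\sum_{j=0}^\infty\phi(j/2^\nu)\,z^j\quad(\nu\ge 0).$$
By \eqref{elem} applied with $\phi$ in place of $\psi$ (and the trivial reindexing $\nu\leftrightarrow\nu+1$), we obtain the uniform bound $\|P_\nu*g\|_p\le C\|g\|_p$ for all $g\in H^p(\bn)$, with $C$ depending only on $\phi$, $p$, $N$.

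Next, I would compute $P_\nu*W_\nu*f$. For $k$ in the support of $\widehat{W_\nu}$, i.e.\ $k\in[2^{\nu-1},2^{\nu+1})$, one has $k/2^\nu\in[1/2,2]$, so $\chi(k/2^\nu)=1$ and hence $\phi(k/2^\nu)=(k/2^\nu)^s=2^{-\nu s}k^s$. Therefore
$$P_\nu*W_\nu*f=\sum_{k}\phi(k/2^\nu)\,\widehat{W_\nu}(k)\,f_k=2^{-\nu s}\sum_{k}k^s\,\widehat{W_\nu}(k)\,f_k=2^{-\nu s}\,W_\nu*\mathcal R^sf,$$
and taking $L^p$-norms gives the upper estimate
$$\|W_\nu*\mathcal R^sf\|_p=2^{\nu\operatorname{Re}s}\|P_\nu*W_\nu*f\|_p\le C\,2^{\nu\operatorname{Re}s}\|W_\nu*f\|_p.$$

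For the reverse inequality I would run exactly the same argument with $s$ replaced by $-s$, applied to the function $\mathcal R^sf$ in place of $f$. This gives $\|W_\nu*\mathcal R^{-s}\mathcal R^sf\|_p\le C\,2^{-\nu\operatorname{Re}s}\|W_\nu*\mathcal R^sf\|_p$; and since $\widehat{W_\nu}(0)=0$ for $\nu\ge 2$, the loss of the constant term in $\mathcal R^{-s}\mathcal R^sf=f-f_0$ is invisible after convolution with $W_\nu$, so the left side equals $\|W_\nu*f\|_p$. There is no real obstacle here: the only point needing care is the smoothness of $\phi$ at the origin, which is automatic because $\chi$ vanishes on a neighbourhood of $0$; the hypothesis $\nu\ge 2$ is exactly what is needed to kill the $k=0$ term and make $\mathcal R^{-s}$ a genuine inverse of $\mathcal R^s$ on the range of $W_\nu*$.
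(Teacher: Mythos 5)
Your proof is correct and takes essentially the same route as the paper's: both realize the restriction of $k\mapsto k^s$ to the block $[2^{\nu-1},2^{\nu+1})$ as a $2^{\nu s}$-rescaled smooth, compactly supported Hadamard multiplier, invoke the uniform bound \eqref{elem}, and then get the reverse inequality by applying the result with $-s$ to $\mathcal R^sf$ and using $W_\nu*\mathcal R^{-s}(\mathcal R^sf)=W_\nu*f$. The only cosmetic difference is that the paper builds its symbol from the reproducing polynomial $Q_\nu=W_{\nu-1}+W_\nu+W_{\nu+1}$ via $\varphi_s(x)=x^s(\psi(x/2)+\psi(x)+\psi(2x))$, whereas you introduce a fresh cutoff $\chi$ equal to $1$ on $[1/2,2]$.
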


   The notation $a\asymp b$ means that $a/b$ lies between two positive constants. In this case these
    constants are independent of $\nu$ and $f.$
 \begin{proof} Let
 $$
 \begin{aligned}
 Q_\nu=W_{\nu-1}+W_\nu+W_{\nu+1},\quad \nu\ge 0,
  \end{aligned}
  $$
  where $W_{-1}:=0.$
  Since $Q_\nu*W_k=0$ for $|\nu-k|\ge 2,$ it follows from \eqref{from} that
  $$
  \begin{aligned} Q_\nu*W_\nu=W_\nu.
   \end{aligned}
   $$

  Let
  $$
  \begin{aligned}
  \varphi_s(x)=x^s(\psi(x/2)+\psi(x)+\psi(2x)).
    \end{aligned}
$$
  We have, for $\nu\ge 2,$
  $$
  \begin{aligned} Q_\nu*\mathcal R^sf=2^{(\nu-1)s}\Omega_\nu^{\varphi_s}*f.
   \end{aligned}
   $$
  From this and $(2.1)$ we get
  \[\|Q_\nu*\mathcal R^sf\|_p\le C2^{\nu\mathop{\rm Re}s}\|f\|_p.\]
  Replacing $f$ by $W_\nu*f$ we get
  $$
    \begin{aligned}
    \|W_\nu*\mathcal R^sf\|_p&=\|Q_\nu*\mathcal R^s(W_\nu*f)\|_p\\&
    \le C2^{\nu\mathop{\rm Re}s}\|W_\nu*f\|_p.
    \end{aligned}
  $$
  The reverse inequality now follows from the relation
  \[W_\nu*f=W_\nu*\mathcal R^{-s}(\mathcal R^sf).\]
  \end{proof}

  \begin{cor} We have
  $\Lambda^{p,q}_\alpha(\bn)\subset H^p(\bn).$
  \end{cor}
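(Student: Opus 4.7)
The plan is to reduce the claim to a summability fact about the dyadic block decomposition $f = \sum_{\nu=0}^\infty W_\nu \ast f$, via Theorem \ref{besov}, and then apply H\"older's inequality to exploit the positivity of $\alpha$.

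First, by Theorem \ref{besov}, $f \in \Lambda^{p,q}_\alpha$ means exactly that $\{2^{\nu\alpha}\|W_\nu \ast f\|_p\}_{\nu \ge 0}$ lies in $\ell^q$. Since $\alpha > 0$, the geometric sequence $\{2^{-\nu\alpha}\}_{\nu\ge 0}$ lies in $\ell^{q'}$, where $q'$ is the exponent conjugate to $q$ (with the usual convention when $q=1$ or $q=\infty$). Hence by H\"older's inequality,
$$
\sum_{\nu=0}^\infty \|W_\nu \ast f\|_p
= \sum_{\nu=0}^\infty 2^{-\nu\alpha}\bigl(2^{\nu\alpha}\|W_\nu \ast f\|_p\bigr)
\le \bigl\|\{2^{-\nu\alpha}\}\bigr\|_{\ell^{q'}}\,\bigl\|\{2^{\nu\alpha}\|W_\nu \ast f\|_p\}\bigr\|_{\ell^q} < \infty.
$$

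Next, I would use $(2.2)$, which gives $f(z) = \sum_{\nu=0}^\infty (W_\nu \ast f)(z)$, together with the fact that each $W_\nu \ast f$ is a polynomial, so by subharmonicity the mean $r \mapsto M_p(r, W_\nu \ast f)$ is nondecreasing and therefore bounded above by $\|W_\nu \ast f\|_p$. Minkowski's inequality then yields, for every $0 < r < 1$,
$$
M_p(r,f) \le \sum_{\nu=0}^\infty M_p(r, W_\nu \ast f) \le \sum_{\nu=0}^\infty \|W_\nu \ast f\|_p < \infty,
$$
with a bound independent of $r$. Taking the supremum over $r$ gives $f \in H^p$, as required.

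There is no real obstacle: the key point is that positivity of $\alpha$ together with the $\ell^q$-condition supplied by Theorem \ref{besov} is exactly what is needed to make the series of block norms absolutely summable, after which holomorphicity (through the monotonicity of $M_p$) closes the argument.
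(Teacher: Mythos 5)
Your proposal is correct and follows essentially the same route as the paper: both invoke Theorem \ref{besov} to control $\|W_\nu*f\|_p$ and then sum the block norms via $f=\sum_\nu W_\nu*f$. The only cosmetic difference is that you use H\"older against $\{2^{-\nu\alpha}\}\in\ell^{q'}$ where the paper simply notes that the $\ell^q$ condition forces $\|W_\nu*f\|_p\le C2^{-\nu\alpha}$ and sums the geometric series; both hinge on $\alpha>0$ in the same way.
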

  \begin{proof}
  Let $f\in \Lambda ^{p,q}_\alpha(\bn).$ Then, by Theorem \ref{besov},
  $\|W_\nu*f\|_p\le C2^{-n\alpha}.$ It follows that
  \[\|f\|_p\le \sum_{\nu=0}^\infty \|W_\nu*f\|_p\ <\infty.\]
    \end{proof}
\begin{cor}\label{}
If $f\in H(\bn)$ and \[K:=\int_0^1(1-r)^{n-1}M_p(r,\mathcal R^nf)\,dr<\infty,\]
then $f\in H^p,$ and $\|f\|_p\le C( |f(0)|+K^{1/p}).$
\end{cor}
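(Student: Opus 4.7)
The plan is to combine the Paley-Littlewood decomposition \eqref{from} with Lemma \ref{WR} and Theorem \ref{bergman}. The starting observation is that the hypothesis $K<\infty$ is equivalent to $\mathcal R^nf\in H^{p,1,n}(\bn)$: the norm $\|\mathcal R^nf\|_{p,1,n}$ (taking $q=1$, $\alpha=n$ in the mixed-norm definition) is $\asymp K$, since the factors $(1-r^2)^{n-1}/(1-r)^{n-1}$ and $r^{2N-1}$ are bounded between positive constants on $(0,1)$. From \eqref{from} the key inequality is then $\|f\|_p\le\sum_{\nu\ge 0}\|W_\nu*f\|_p$.

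For $\nu\ge 2$ I would apply Lemma \ref{WR} with $s=-n$ to obtain $\|W_\nu*f\|_p\le C\,2^{-\nu n}\|W_\nu*\mathcal R^nf\|_p$. Summing, together with Theorem \ref{bergman} (the identification $H^{p,1,n}=\mathcal B^{p,1}_n$), gives $\sum_{\nu\ge 2}\|W_\nu*f\|_p\le C\|\mathcal R^nf\|_{\mathcal B^{p,1}_n}\asymp CK$, which accounts for the main contribution.

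For $\nu=0,1$ Lemma \ref{WR} is unavailable, because $\mathcal R^{-n}$ is singular at the frequency $k=0$ that lies in the spectrum of $W_0$. However, $W_0*f+W_1*f$ is a polynomial of degree $<4$, i.e.\ a linear combination of $f_0,f_1,f_2,f_3$. Here $f_0=f(0)$ already appears on the right-hand side, so it remains to bound $\|f_k\|_p$ for $k=1,2,3$. I would recover $f_k$ from $\mathcal R^nf$ via the rotational projection $f_k(z)=k^{-n}(2\pi)^{-1}\int_0^{2\pi}\mathcal R^nf(e^{i\theta}z)e^{-ik\theta}\,d\theta$; Minkowski's inequality then yields $\|f_k\|_p\le k^{-n}r^{-k}M_p(r,\mathcal R^nf)$ for every $r\in(0,1)$. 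Taking $r=1/2$ and using that $M_p(\cdot,\mathcal R^nf)$ is nondecreasing (from plurisubharmonicity of $|\mathcal R^nf|^p$), I get $M_p(1/2,\mathcal R^nf)\int_{1/2}^1(1-r)^{n-1}\,dr\le K$, so $M_p(1/2,\mathcal R^nf)\le C_nK$ and hence $\sum_{k=1}^3\|f_k\|_p\le CK$.

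Combining the two regimes yields $\|f\|_p\le C(|f(0)|+K)$. The main obstacle is the failure of Lemma \ref{WR} at the two lowest Paley-Littlewood blocks; the ad hoc rotational projection estimate circumvents it, using only monotonicity of the integral means to pass from a single value $M_p(1/2,\mathcal R^nf)$ to the weighted integral defining $K$.
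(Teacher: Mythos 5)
Your argument is correct and takes essentially the same route as the paper's one-line proof: Theorem~\ref{bergman} together with Lemma~\ref{WR} applied to the decomposition $\|f\|_p\le\sum_{\nu}\|W_\nu*f\|_p$, with your rotational-projection estimate for $f_1,f_2,f_3$ merely filling in the low-frequency blocks $\nu=0,1$ that the paper's displayed equivalence $K+|f(0)|\asymp\sum_{\nu=0}^\infty\|W_\nu*f\|_p$ silently absorbs. Note that both your argument and the paper's yield the bound $C(|f(0)|+K)$, so the exponent $1/p$ in the stated corollary appears to be a misprint (only the qualitative conclusion $f\in H^p$ is used later).
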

\begin{proof}
by Theorem \ref{bergman} and Lemma \ref{WR}, we have
\[K+|f(0)|\asymp \sum_{\nu=0}^\infty\|W_\nu*f\|_p.\]
The result follows.
 \end{proof}
 \subsection{The operators $\mathcal R^{s,t}$}
 Let $s,t$ be real numbers such that neither $N+s$ nor $N+s+t$ is a negative integer. Let
 $$
 \begin{aligned}
 \mathcal R^{s,t}f(z)=\sum_{k=0}^\infty\frac{\Gamma(N+1+s)\Gamma(N+1+k+s+t)}{\Gamma(N+1+s+t)\Gamma(N+1+k+s)}
 f_k(z).
  \end{aligned}
  $$
  The following theorem is proved in \cite[Theorem 3.10]{6}.
  \begin{other}\label{in}
  Suppose $t>\alpha,$ $f\in H^p.$ If $s$ is a real number such
  that neither $N+s$ nor $N+s+t$
  is a negative integer, then $f\in\Lambda^{p,q}_\alpha$ if and only
  if $\mathcal R^{s,t}f\in L^{p,q}_{t-\alpha}.$
   \end{other}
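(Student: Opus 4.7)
The plan is to establish the dyadic-block analogue of Lemma~\ref{WR} for the operator $\mathcal{R}^{s,t}$, namely
\begin{equation}\label{Rst-block}
\|W_\nu*\mathcal{R}^{s,t}f\|_p \asymp 2^{\nu t}\|W_\nu*f\|_p \qquad (\nu\ge\nu_0),
\end{equation}
and then deduce the conclusion from Theorems~\ref{bergman} and~\ref{besov}. Indeed, since $\mathcal{R}^{s,t}f\in H(\bn)$, Theorem~\ref{bergman} says that $\mathcal{R}^{s,t}f\in L^{p,q}_{t-\alpha}$ is equivalent to $\{2^{-\nu(t-\alpha)}\|W_\nu*\mathcal{R}^{s,t}f\|_p\}_\nu\in\ell^q$; by \eqref{Rst-block} this is in turn the same as $\{2^{\nu\alpha}\|W_\nu*f\|_p\}_\nu\in\ell^q$, i.e.\ $f\in\mathcal{B}^{p,q}_{-\alpha}=\Lambda^{p,q}_\alpha$ by Theorem~\ref{besov}. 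The finitely many initial terms $\nu<\nu_0$ contribute only a finite-dimensional correction that is harmless for $\ell^q$ summability.

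To prove \eqref{Rst-block} I mimic the argument of Lemma~\ref{WR}. Write the $k$-th multiplier coefficient of $\mathcal{R}^{s,t}$ as $g(k)$, where
\[
g(x)=\frac{\Gamma(N+1+s)\,\Gamma(N+1+x+s+t)}{\Gamma(N+1+s+t)\,\Gamma(N+1+x+s)}.
\]
Under the hypothesis on $s$ and $s+t$, $g$ is $C^\infty$ on $[\nu_0,\infty)$ for some $\nu_0$, and the classical asymptotic $\Gamma(x+a)/\Gamma(x+b)\sim x^{a-b}$ together with its derivatives gives $g^{(j)}(x)=O(x^{t-j})$ as $x\to\infty$ for every $j\ge 0$. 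With $Q_\nu=W_{\nu-1}+W_\nu+W_{\nu+1}$ and $\tilde\psi(y)=\psi(y/2)+\psi(y)+\psi(2y)$, a direct coefficient computation yields
\[
Q_\nu*\mathcal{R}^{s,t}f = 2^{(\nu-1)t}\,\Omega_\nu^{\varphi_\nu}*f,\qquad \varphi_\nu(y):=2^{-(\nu-1)t}\,\tilde\psi(y)\,g(2^{\nu-1}y).
\]
Since $\tilde\psi$ has fixed compact support in $(0,\infty)$ and $2^{-(\nu-1)t}g(2^{\nu-1}y)$ converges in $C^\infty$ (uniformly for $y$ in this support) to a constant multiple of $y^t$, the family $\{\varphi_\nu\}_{\nu\ge\nu_0}$ is bounded in every $C^k$ norm. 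The uniform form of \eqref{elem} (Lemma~7.3.2 of \cite{18}) then gives $\|\Omega_\nu^{\varphi_\nu}*f\|_p\le C\|f\|_p$ with $C$ independent of $\nu$; substituting $W_\nu*f$ for $f$ and using $Q_\nu*W_\nu=W_\nu$ yields $\|W_\nu*\mathcal{R}^{s,t}f\|_p\le C\,2^{\nu t}\|W_\nu*f\|_p$. The reverse inequality follows by applying the same argument to the inverse operator $\mathcal{R}^{s+t,-t}$, whose coefficients $1/g(k)$ are the pointwise reciprocals of those of $\mathcal{R}^{s,t}$; the symmetric hypothesis on $N+(s+t)$ and $N+s$ ensures it is well-defined.

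The principal obstacle is the uniform $C^k$ bound on $\{\varphi_\nu\}$; all other steps are essentially bookkeeping. This in turn reduces to the symbol-type estimates $|\partial_x^j(g(x)/x^t)|=O(x^{-j})$ for large $x$, which follow from differentiating the asymptotic expansion of a ratio of two Gamma functions with shifted arguments.
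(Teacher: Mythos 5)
Your proof is correct, and it reaches the conclusion by the same overall reduction as the paper (pass to the dyadic blocks $W_\nu*f$ and invoke Theorem \ref{bergman} together with Theorem \ref{besov}), but the key block estimate $\|W_\nu*\mathcal R^{s,t}f\|_p\asymp 2^{\nu t}\|W_\nu*f\|_p$ is obtained by a genuinely different argument. The paper applies Stirling's formula only to first order, writing the multiplier coefficients as $k^t\bigl(a_1+a_2/k+o(1/k)\bigr)$, and then proves a general multiplier theorem (Theorem 2.7): after normalizing to $t=0$, $a_1=1$ via Lemma \ref{WR}, it splits $TQ=Q+a_2\mathcal R^{-1}Q+T_1Q$ and controls the remainder $T_1Q$ by the crude coefficient-by-coefficient bound $\|Q_k\|_p\le\|Q\|_p$ summed over the $O(2^\nu)$ homogeneous components of the block; this needs no information about derivatives of the symbol and yields a statement valid for any multiplier with that two-term expansion. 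You instead rescale the symbol onto the fixed support of $\tilde\psi$ and invoke a uniform version of \eqref{elem}, which requires the symbol-type estimates $|\partial_x^j(g(x)/x^t)|=O(x^{-j})$ for all $j$; these do hold for ratios of Gamma functions (the asymptotic expansion is differentiable, e.g.\ by Cauchy estimates in a sector), and the uniform dependence of the constant in \eqref{elem} on finitely many derivatives of the symbol is exactly what \cite[Lemma 7.3.2]{18} provides, but you should state that dependence explicitly since the paper only records \eqref{elem} for a single fixed $\psi$. Your treatment of the reverse inequality via the inverse operator $\mathcal R^{s+t,-t}$, and of the finitely many initial blocks by finite-dimensionality, is the same device the paper uses (implicitly) and is fine; note only that the hypothesis that neither $N+s$ nor $N+s+t$ is a negative integer is what guarantees $g(k)\ne 0$ for every $k$, so the inverse acts on all homogeneous components. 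In short: your route is heavier on harmonic-analysis machinery but more standard and robust; the paper's is more elementary and isolates a reusable multiplier theorem needing only a first-order asymptotic with $o(1/k)$ error.
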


   An application of Stirling's formula shows that
   $$
   \begin{aligned}
   \frac{\Gamma(N+1+s)\Gamma(N+1+k+s+t)}{\Gamma(N+1+s+t)\Gamma(N+1+k+s)}=k^t\Big(a_1+\frac {a_2}k+
   \mathcal O\big(\frac 1{k^2}\big)\Big), \quad k\to\infty,
    \end{aligned}
    $$
    where $a_1\ne 0$ and $a_2$  are constants. Therefore Theorem \ref{in} is a
     consequence of the following result.
    \begin{thm}\label{}
Let $T:H(\bn)\mapsto H(\bn)$ be an operator of the form
$$
\begin{aligned} Tf(z)=\sum_{k=0}^\infty \lambda_kf_k(z),
 \end{aligned}
 $$
 where
 $$
 \begin{aligned}
 \lambda_k=k^t\Big(a_1+\frac {a_2}k+ o\big(\frac 1{k}\big)\Big), \quad k\to \infty,
   \end{aligned}
  $$
   and $t$, $a_1\ne 0,$ and $a_2$ are constants. Let $t>\alpha.$ Then $f\in \Lambda^{p,q}_\alpha$ if and only if
   $Tf\in L^{p,q}_{t-\alpha}.$
     \end{thm}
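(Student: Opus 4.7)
The plan is to pass to the dyadic decomposition $f=\sum_\nu W_\nu*f$ and reduce everything to a blockwise multiplier estimate, which I would then establish by adapting the proof of Lemma~\ref{WR}. By Theorem~\ref{bergman} and Theorem~\ref{besov}, $f\in\Lambda^{p,q}_\alpha$ is equivalent to $\{2^{\nu\alpha}\|W_\nu*f\|_p\}_\nu\in\ell^q$, while $Tf\in L^{p,q}_{t-\alpha}$ (note that $Tf\in H(\bn)$ automatically, since $\lambda_k=O(k^t)$) is equivalent to $\{2^{-\nu(t-\alpha)}\|W_\nu*Tf\|_p\}_\nu\in\ell^q$. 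Hence the whole theorem reduces to the two-sided block bound
\[\|W_\nu*Tf\|_p\asymp 2^{\nu t}\|W_\nu*f\|_p,\qquad \nu\geq\nu_0,\]
for some absolute $\nu_0$; the finitely many small indices contribute only bounded corrections.

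Imitating Lemma~\ref{WR}, I would decompose $\lambda_k=a_1k^t+a_2k^{t-1}+k^{t-1}\epsilon_k$ with $\epsilon_k\to 0$, so that $T=a_1\mathcal R^t+a_2\mathcal R^{t-1}+S$, where $Sf=\sum_{k\ge 1}k^{t-1}\epsilon_k f_k$. Lemma~\ref{WR} immediately yields
\[\|W_\nu*\mathcal R^tf\|_p\asymp 2^{\nu t}\|W_\nu*f\|_p,\qquad \|W_\nu*\mathcal R^{t-1}f\|_p\asymp 2^{\nu(t-1)}\|W_\nu*f\|_p,\]
so the $a_1\mathcal R^t$ term dominates, and the desired two-sided estimate follows via the triangle inequality (using $a_1\neq 0$ and the smallness of the lower-order terms for large $\nu$) as soon as I prove $\|W_\nu*Sf\|_p=o\bigl(2^{\nu t}\|W_\nu*f\|_p\bigr)$ as $\nu\to\infty$. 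For this, since $S$ commutes with Hadamard convolution and $Q_\nu*W_\nu=W_\nu$ with $Q_\nu=W_{\nu-1}+W_\nu+W_{\nu+1}$, one has $W_\nu*Sf=Q_\nu*S(W_\nu*f)$, and a direct repetition of the computation in the proof of Lemma~\ref{WR} expresses this as $2^{(\nu-1)(t-1)}\Omega^{\chi_\nu}_\nu*(W_\nu*f)$, where the symbol $\chi_\nu$ on $(0,\infty)$ should satisfy $\chi_\nu(k/2^{\nu-1})=\tilde\varphi(k/2^{\nu-1})(k/2^{\nu-1})^{t-1}\epsilon_k$ with $\tilde\varphi(x)=\psi(x/2)+\psi(x)+\psi(2x)$. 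Letting $M_\nu=\sup_{k\geq 2^{\nu-2}}|\epsilon_k|\to 0$, a standard mollification at unit scale produces $\chi_\nu\in C^\infty_c((0,\infty))$ with $\|\chi_\nu\|_{C^M}\leq CM_\nu$; inequality \eqref{elem} then gives $\|W_\nu*Sf\|_p\leq CM_\nu 2^{\nu(t-1)}\|W_\nu*f\|_p$, which is the required $o\bigl(2^{\nu t}\|W_\nu*f\|_p\bigr)$.

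The main obstacle is the smoothing step: one must construct the extension $\chi_\nu$ of a pointwise-only sequence to a genuinely $C^\infty_c$ symbol whose $C^M$-norm is controlled by $M_\nu$, and one must confirm that the constant in \eqref{elem} depends only on a fixed Sobolev norm of the symbol. This is standard but requires care in the bookkeeping. I note that in the principal application---the Gamma-ratio multipliers of Theorem~\ref{in}---this issue disappears entirely: Stirling's formula yields a full asymptotic expansion, so in fact $|\epsilon_k|=O(1/k)$, the perturbation $S$ has symbol of size $O(k^{t-2})$, and it can be absorbed by a further application of Lemma~\ref{WR} with exponent $t-2$, bypassing any non-smooth symbols altogether.
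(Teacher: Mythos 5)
Your reduction to the blockwise estimate $\|W_\nu*Tf\|_p\asymp 2^{\nu t}\|W_\nu*f\|_p$ and the splitting $T=a_1\mathcal R^t+a_2\mathcal R^{t-1}+S$ match the paper, but your treatment of the error term $S$ has a genuine gap, and it is exactly the step you flag as the main obstacle. An arbitrary sequence $\epsilon_k\to 0$ sampled at the points $x=k/2^{\nu-1}$, which are spaced $2^{-(\nu-1)}$ apart, cannot be interpolated by a $\chi_\nu\in C^\infty_c((0,\infty))$ with $\|\chi_\nu\|_{C^M}\le CM_\nu$: if the $\epsilon_k$ oscillate between consecutive integers (say $\epsilon_k=M_\nu(-1)^k$), the mean value theorem already forces $\sup|\chi_\nu'|\ge cM_\nu 2^{\nu}$, and each further derivative costs another factor $2^\nu$. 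Since the constant in \eqref{elem} depends on derivatives of the symbol, your bound $\|W_\nu*Sf\|_p\le CM_\nu2^{\nu(t-1)}\|W_\nu*f\|_p$ does not follow from this route; for $p=1$ or $p=\infty$ an estimate of that strength for an arbitrary bounded perturbation is not to be expected anyway. Your fallback for the Gamma-ratio multipliers also does not quite close the gap, because Lemma \ref{WR} applies only to the exact powers $\mathcal R^s$, not to a general symbol that is merely $O(k^{t-2})$.

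The paper closes this step without any smoothness of the symbol. After normalizing to $t=0$, $a_1=1$, it writes the error as $T_1Q=\sum_k(\eta_k/k)Q_k$ with $Q=W_\nu*f$, $Q_k=\widehat W_\nu(k)f_k$, $\eta_k\to0$, and uses only the elementary inequality $\|Q_k\|_p\le\|Q\|_p$ for a single homogeneous component together with the triangle inequality over the block:
$$\|T_1Q\|_p\le 2^{1-\nu}\|Q\|_p\sum_{k=2^{\nu-1}}^{2^{\nu+1}}|\eta_k|\le 4\,\|Q\|_p\sup_{k\ge 2^{\nu-1}}|\eta_k|.$$
The point is that the extra factor $1/k\asymp2^{-\nu}$ in the error symbol exactly compensates for the $\asymp2^{\nu}$ terms in the block, so the crude term-by-term bound already yields $o(\|Q\|_p)$ as $\nu\to\infty$, which is all that is needed to beat the main term for large $\nu$; your target $O(M_\nu2^{\nu(t-1)}\|W_\nu*f\|_p)$ is stronger than necessary. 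If you replace the mollification step by this observation (and use it also for the $O(k^{t-2})$ remainder in the Stirling application), your argument becomes correct and coincides with the paper's.
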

     \begin{proof}
     By Theorem \ref{bergman} and Lemma \ref{WR}, it suffices to prove that
     $$
     \begin{aligned}
     \|T(W_\nu*f)\|_p\asymp \|W_\nu*f\|_p,\quad \nu\to\infty,
       \end{aligned}
   $$
       under the hypothesis $t=0$ and $a_1=1.$ Let $Q=W_\nu*f$.
       Let $$T_1f(z)=\sum_{k=1}^\infty \frac{\eta_k}kf_k(z),$$
       where $\eta_k\to0$ as  $k\to\infty.$ Let $Q_k(z)=\widehat W_\nu(k)f_k(z).$
       Then
       \[TQ=Q+a_2\mathcal R^{-1}Q+T_1Q.\]
       Hence, by Lemma \ref{WR} and the inequality
       \( \|Q_k\|_p\le \|Q\|_p \),
       \[\begin{aligned} \|TQ\|_p&\le \|Q\|_p+C_1|a_2|2^{{-\nu}}\|Q\|_p +
       2^{1-\nu}\sum_{k=2^{\nu-1}}^{2^{\nu+1}}|\eta_k|\,\|Q_k\|_p\\&
      \le  \|Q\|_p+C_1|a_2|2^{{-\nu}}\|Q\|_p +
       2^{1-\nu}\|Q\|_p\sum_{k=2^{\nu-1}}^{2^{\nu+1}}|\eta_k|\\&\le C\|Q\|_p.
       \end{aligned} \]
     In the other direction we have
     \[\begin{aligned} \|TQ\|_p&\geq \|Q\|_p-C_1|a_2|2^{{-\nu}}\|Q\|_p -
       2^{1-\nu}\sum_{k=2^{\nu-1}}^{2^{\nu+1}}|\eta_k|\,\|Q_k\|_p\\&
      \geq  \|Q\|_p-C_1|a_2|2^{{-\nu}}\|Q\|_p -
       2^{1-\nu}\|Q\|_p\sum_{k=2^{\nu-1}}^{2^{\nu+1}}|\eta_k|\\.  \end{aligned} \]
       Now choose $\nu_0$ so that $C_1|a_2|2^{{-\nu}}<1/4$ and $2^{1-\nu}
       \sum_{k=2^{\nu-1}}^{2^{\nu+1}}|\eta_k|<1/4$ for $\nu>\nu_0$
       to get $\|TQ\|_p\ge (1/2)\|Q\|_p$.  This proves the theorem.
                   \end{proof}
 \subsection{The case $1<p<\infty$} In this case the above discussion can be made
  simpler by using the Riesz projection theorem. Namely, then we can replace $W_\nu$ by
 $$
 \begin{aligned}
 V_\nu(z)=\sum_{k=2^{\nu-1}}^{2^{\nu}-1}z^k, \quad z\in\mathbb B_1, \ \nu\ge1,
  \end{aligned}
  $$
 and $V_0=1$. Obviously, these polynomials satisfy \eqref{from} and also, by the projection theorem,
  \eqref{2.5} $(1<p<\infty).$

\section{Best approximation by polynomials}

For a function $f\in H^p(\bn),$ let
\begin{equation}\label{}\begin{aligned}
E_\nu(f)_p=\inf\{\|f-P\|_p:P\in \mathscr P_\nu(\bn)\},
 \end{aligned} \end{equation}
 where $\mathscr P_\nu(\bn)$ is the subset of $H(\bn)$ consisting of all polynomials
  of degree $\le \nu.$
 The following characterization of the one variable Besov-Lipschitz spaces is well known
 (for a proof see \cite{9}).
  \begin{thm}\label{}
  A function $f\in H^p(\bn)$ is in  $\Lambda^{p,q}_\alpha$ if and only if the sequence
  $\{2^{\nu\alpha}E_{2^\nu}(f)_p\}_{\nu=0}^\infty$ is in $\ell^q.$
\end{thm}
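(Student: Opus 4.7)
The plan is to route the theorem through the dyadic Littlewood--Paley characterization supplied by Theorem~\ref{besov}, which identifies $\Lambda^{p,q}_\alpha$ with $\mathcal{B}^{p,q}_{-\alpha}$, i.e.\ with the class of $f\in H(\bn)$ for which $\{2^{\nu\alpha}\|W_\nu*f\|_p\}_{\nu\ge 0}\in\ell^q$. It therefore suffices to show that, for $f\in H^p(\bn)$, this sequence is in $\ell^q$ if and only if $\{2^{n\alpha}E_{2^n}(f)_p\}_{n\ge 0}$ is. Both directions are short comparisons between the dyadic blocks $W_\nu*f$ and near-best polynomial approximants, exploiting the frequency localization $\operatorname{supp}\widehat{W}_\nu\subset[2^{\nu-1},2^{\nu+1})$ and the norm bound \eqref{2.5}.

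For the direction ``approximation $\Rightarrow$ dyadic norm,'' the frequency support of $\widehat{W}_\nu$ gives $W_\nu*Q=0$ for every polynomial $Q$ of degree $<2^{\nu-1}$. For $\nu\ge 2$ choose a near-best approximant $Q_\nu^*\in\mathscr{P}_{2^{\nu-2}}(\bn)$ with $\|f-Q_\nu^*\|_p\le 2E_{2^{\nu-2}}(f)_p$; then by \eqref{2.5},
\[
\|W_\nu*f\|_p=\|W_\nu*(f-Q_\nu^*)\|_p\le C\|f-Q_\nu^*\|_p\le 2C\,E_{2^{\nu-2}}(f)_p.
\]
Multiplying by $2^{\nu\alpha}$ and passing to the $\ell^q$-norm (in $\nu$) yields the required bound after a reindex $n=\nu-2$, handling the first two blocks separately.

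For the converse, observe that the partial sum $P_{n-1}:=\sum_{\nu=0}^{n-1}W_\nu*f$ has degree at most $2^n-1$ and hence lies in $\mathscr{P}_{2^n}(\bn)$, so
\[
E_{2^n}(f)_p\le \|f-P_{n-1}\|_p\le \sum_{\nu\ge n}\|W_\nu*f\|_p.
\]
Setting $a_\nu:=2^{\nu\alpha}\|W_\nu*f\|_p$, this reads $2^{n\alpha}E_{2^n}(f)_p\le \sum_{\nu\ge n}2^{-(\nu-n)\alpha}a_\nu$; since $\alpha>0$, the kernel $k\mapsto 2^{-k\alpha}$ is summable, so the discrete Young inequality gives $\|\{2^{n\alpha}E_{2^n}(f)_p\}\|_{\ell^q}\le C\|\{a_\nu\}\|_{\ell^q}$. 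Combining both estimates with Theorem~\ref{besov} completes the proof. The only real bookkeeping is aligning the polynomial degrees with the supports of the $\widehat{W}_\nu$ so that the frequency annihilation $W_\nu*Q=0$ can be invoked; once that is pinned down, the argument is a completely routine discrete Hardy/Young calculation, valid uniformly in $q\in[1,\infty]$.
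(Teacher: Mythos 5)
Your proposal is correct and follows essentially the same route as the paper: both directions go through the Littlewood--Paley characterization of Theorem~\ref{besov}, using the frequency annihilation $W_\nu*Q=0$ for low-degree polynomials in one direction and the partial sums $\sum_{k<n}W_k*f$ as competitors for $E_{2^n}(f)_p$ in the other. Your use of the discrete Young (convolution) inequality in place of the paper's Lemma on sequences with $\{2^{\nu\alpha}|s_\nu-s_{\nu-1}|\}\in\ell^q$ is an equivalent, equally valid way to finish.
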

  \begin{proof}
  Let $P_\nu$ be a sequence of polynomials of degree $\le 2^\nu$ such that
  \[\{2^{\nu\alpha}\|f-P_\nu\|_p\}_0^\infty\in \ell^q.\]
  Since \[(f-P_\nu)*W_{\nu+2}=f*W_{\nu+2}\]
  we have
  \[\|W_{\nu+2}*f\|_p\le C\|f-P_\nu\|_p.\]
  This and Theorem \ref{besov} show that  $f\in \Lambda^{p,q}_\alpha.$ To prove
   the converse observe that
  $Q_\nu=\sum_{k=0}^{\nu-1}W_k,$ $\nu \ge1,$ is a polynomial of degree $\le 2^\nu,$ and therefore
by using $(2.2)$ we obtain
  \[\begin{aligned} E_{2^\nu}(f)_p&\le \|f-Q_\nu * f\|_{p}\\&\le\sum_{k={\nu-1}}^\infty\|W_k*f\|_p.
  \end{aligned} \]
  Hence
  $$
  \begin{aligned} \|\{2^{\nu\alpha}E_{2^\nu}(f)_p\}_{\nu=0}^\infty\|_{\ell^q}
   \le \|\{2^{\nu\alpha}s_{\nu-1}\}_{\nu=1}^\infty\|_{\ell^q},
   \end{aligned}
   $$
   where\[s_\nu=\sum_{k=\nu}^\infty \|W_k*f\|_p.\]
   Now the desired result follows Theorem \ref{besov} and the following lemma. \end{proof}
   \begin{lem}\label{} If $\{s_\nu\}_0^\infty$ is a sequence of complex numbers such that
   $\{2^{\nu\alpha}|s_\nu-s_{\nu-1}|\}_1^\infty\in \ell^q$, then
    $\{2^{\nu\alpha}|s_\nu|\}_1^\infty\in \ell^q$.
     \end{lem}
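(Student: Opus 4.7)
The plan is to reduce the lemma to a weighted discrete Hardy inequality after telescoping. I first note an implicit assumption: the statement as written needs $s_\nu\to 0$, since e.g.\ $s_\nu\equiv 1$ has all differences zero but infinite weighted norm. This hypothesis is automatic in the only application, where $s_\nu=\sum_{k\ge\nu}\|W_k*f\|_p$ is a convergent tail (by H\"older against $\{2^{-k\alpha}\}$, using $\alpha>0$, the series $\sum\|W_k*f\|_p$ is finite whenever $f\in\Lambda^{p,q}_\alpha$). Assuming $s_\nu\to0$, telescope
\[
s_\nu=-\sum_{k=\nu+1}^\infty(s_k-s_{k-1}),\qquad |s_\nu|\le\sum_{k=\nu+1}^\infty a_k,\qquad a_k:=|s_k-s_{k-1}|,
\]
so the task reduces to proving the weighted Hardy inequality
\[
\Bigl\|\Bigl\{2^{\nu\alpha}\sum_{k>\nu}a_k\Bigr\}_\nu\Bigr\|_{\ell^q}\le C\,\|\{2^{k\alpha}a_k\}_k\|_{\ell^q}.
\]

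For this, Minkowski's inequality in $\ell^q$ is the cleanest tool. Reindex $k=\nu+j$ with $j\ge 1$, writing $\sum_{k>\nu}a_k=\sum_{j\ge 1}a_{\nu+j}$, and regard the quantity to be estimated as the $\ell^q(d\nu)$-norm of a sum over $j$. Minkowski gives
\[
\Bigl\|\Bigl\{2^{\nu\alpha}\sum_{j\ge 1}a_{\nu+j}\Bigr\}_\nu\Bigr\|_{\ell^q}\le\sum_{j\ge 1}\bigl\|\{2^{\nu\alpha}a_{\nu+j}\}_\nu\bigr\|_{\ell^q}=\sum_{j\ge 1}2^{-j\alpha}\bigl\|\{2^{(\nu+j)\alpha}a_{\nu+j}\}_\nu\bigr\|_{\ell^q},
\]
and after a change of summation index the last norm is bounded by $\|\{2^{k\alpha}a_k\}\|_{\ell^q}$. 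The geometric series $\sum_{j\ge 1}2^{-j\alpha}=(2^\alpha-1)^{-1}$ converges precisely because $\alpha>0$, yielding the bound with $C=(2^\alpha-1)^{-1}$. The case $q=\infty$ is handled by the same argument, or more directly by $2^{\nu\alpha}\sum_{k>\nu}a_k\le M\cdot 2^{\nu\alpha}\sum_{k>\nu}2^{-k\alpha}\le M/(2^\alpha-1)$ with $M=\sup_k 2^{k\alpha}a_k$.

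There is no substantive obstacle here; the entire content is a one-line shift-Minkowski estimate powered by the exponential weight $\alpha>0$. The only point of care is flagging the tail condition $s_\nu\to 0$ at the outset — without it the lemma fails, but with it the argument above is immediate.
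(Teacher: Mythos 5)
Your proof is correct, and it takes a recognizably different route from the paper's. The paper does not telescope all the way down: it writes the one-step triangle inequality $\|\{2^{\nu\alpha}s_{\nu-1}\}\|_{\ell^q}\le\|\{2^{\nu\alpha}|s_{\nu-1}-s_\nu|\}\|_{\ell^q}+\|\{2^{\nu\alpha}s_\nu\}\|_{\ell^q}$, observes by reindexing that the last term equals $2^{-\alpha}$ times (a tail of) the left-hand side, and then absorbs it, obtaining the constant $(1-2^{-\alpha})^{-1}$; finiteness of the quantity being absorbed is secured by the phrase ``assuming, as we may, $s_\nu$ is eventually zero.'' You instead make the telescoping explicit, $|s_\nu|\le\sum_{k>\nu}|s_k-s_{k-1}|$, and sum the resulting geometric series of shifts via Minkowski. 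The two arguments exploit exactly the same mechanism (the exponential weight $2^{\nu\alpha}$ with $\alpha>0$ beating a unit shift), but yours avoids the absorption step and so needs no separate finiteness argument beyond $s_\nu\to0$, while the paper's is shorter on the page. A genuine merit of your write-up is that you flag explicitly that the lemma as stated is false without a tail condition ($s_\nu\equiv1$ is a counterexample) and verify that $s_\nu\to0$ holds in the one place the lemma is used ($s_\nu=\sum_{k\ge\nu}\|W_k*f\|_p$, convergent by H\"older since $f\in\Lambda^{p,q}_\alpha$); the paper hides this behind ``as we may,'' which is not literally a harmless normalization. Your $q=\infty$ remark is also correct and matches what the paper's absorption argument gives in that case.
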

     \begin{proof}  Assuming, as we may,  $s_\nu$ is eventually zero, we have
   \[\begin{aligned}
   M:=\|\{2^{\nu\alpha}s_{\nu-1}\}_{\nu=1}^\infty\|_{\ell^q}&
   \le\|\{2^{\nu\alpha}|s_{\nu-1}-s_\nu|\}_{\nu=1}^\infty\|_{\ell^q}+
   \|\{2^{\nu\alpha}s_{\nu}\}_{\nu=1}^\infty\|_{\ell^q}\\&=
   \|\{2^{\nu\alpha}|s_{\nu-1}-s_\nu|\}_{\nu=1}^\infty\|_{\ell^q}+
   \|\{2^{(\nu-1)\alpha}s_{\nu-1}\}_{\nu=2}^\infty\|_{\ell^q}
  \\& \le \|\{2^{\nu\alpha}|s_{\nu-1}-s_\nu|\}_{\nu=1}^\infty\|_{\ell^q}+
   2^{-\alpha}\|\{2^{\nu\alpha}s_{\nu-1}\}_{\nu=2}^\infty\|_{\ell^q}
   \\& \le \|\{2^{\nu\alpha}|s_{\nu-1}-s_\nu|\}_{\nu=1}^\infty\|_{\ell^q}+2^{-\alpha}{ M}.
     \end{aligned} \]
     Since $M$ is finite we get
     $$
     \begin{aligned}
     M\le (1-2^{-\alpha})^{-1}\|\{2^{\nu\alpha}|s_{\nu-1}-s_\nu|\}_{\nu=1}^\infty\|_{\ell^q}.
      \end{aligned}
      $$
      This proves the lemma.
  \end{proof}

\section{Characterizations of Besov-Lipschitz spaces}

Note that another way to express $\mathcal R$ is
\begin{equation}\label{theta}\mathcal Rf(re^{i\theta}\zeta)=-i\frac{\partial }
{\partial \theta}f(re^{i\theta}\zeta), \quad |\zeta|=1,\ 0\le
r<1.\end{equation}

As it was noticed in the introduction Proposition 1.1 follows from
the inequality  \eqref{1.4} and the following inequality:

\begin{lem}\label{ahern2} $\|\nabla_n(f)\|_p\le C\|\mathcal R^nf\|_p$  \end{lem}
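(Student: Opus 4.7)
The plan is to induct on $n$. The base case $n=1$ is exactly Lemma \ref{ahern}, since $\nabla_1(f)=|\nabla f|$. For the inductive step, assume the inequality for $n-1$ and for every holomorphic function. Because $\nabla_n(f)^2=\sum_{|\alpha|=n}\tfrac{n!}{\alpha!}|\partial^\alpha f|^2$, the function $\nabla_n(f)(z)$ is pointwise comparable (with constants depending only on $n$ and $N$) to $\max_{|\alpha|=n}|\partial^\alpha f(z)|$, so it suffices to estimate $M_p(r,\partial^\alpha f)$ for each multi-index $\alpha$ with $|\alpha|=n$.

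Fix such an $\alpha$ and write $\alpha=e_j+\beta$ with $|\beta|=n-1$. Since $|\partial^\alpha f|=|\partial_j(\partial^\beta f)|\le|\nabla(\partial^\beta f)|$ pointwise, Lemma \ref{ahern} applied to the holomorphic function $\partial^\beta f$ gives, for $1/4<r<1$,
\[ M_p(r,\partial^\alpha f)\le C\,M_p(r,\mathcal R(\partial^\beta f)). \]
The crucial identity is the commutation relation $\mathcal R\partial^\beta=\partial^\beta\mathcal R-(n-1)\partial^\beta$, verified by noting that $\mathcal R$ multiplies a homogeneous polynomial of degree $k$ by $k$ while $\partial^\beta$ lowers the degree by $n-1$; both sides applied to $f_k$ give $(k-n+1)\partial^\beta f_k$. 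Applying the inductive hypothesis to the holomorphic functions $\mathcal R f$ and $f$ then yields $M_p(r,\partial^\beta(\mathcal R f))\le C\,M_p(r,\mathcal R^n f)$ and $M_p(r,\partial^\beta f)\le C\,M_p(r,\mathcal R^{n-1}f)$, whence
\[ M_p(r,\partial^\alpha f)\le C\bigl[M_p(r,\mathcal R^n f)+M_p(r,\mathcal R^{n-1}f)\bigr]. \]

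The main obstacle is to absorb the lower-order term $M_p(r,\mathcal R^{n-1}f)$ into $M_p(r,\mathcal R^n f)$. Since $\mathcal R^n f$ vanishes at the origin, one has the integral representation $\mathcal R^{n-1}f(z)=\int_0^1 \mathcal R^n f(tz)/t\,dt$ (valid for $n\ge 2$), and Minkowski's inequality gives
\[ M_p(r,\mathcal R^{n-1}f)\le\int_0^r M_p(s,\mathcal R^n f)/s\,ds. \]
The key auxiliary fact---that $s\mapsto M_p(s,h)/s$ is nondecreasing on $(0,1)$ for any holomorphic $h$ with $h(0)=0$---then bounds the right-hand side by $M_p(r,\mathcal R^n f)$. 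To justify this monotonicity, one observes that $z\mapsto h(z\cdot)/z=h_1(\cdot)+zh_2(\cdot)+\cdots$ extends to a holomorphic $L^p(\sn)$-valued function on the disc $\mathbb D$, so that $\psi(z):=\|h(z\cdot)/z\|_{L^p(\sn)}^p$ is a radial subharmonic function on $\mathbb D$ with finite value $\|h_1\|_{L^p}^p$ at $0$; its spherical averages are therefore nondecreasing, and since it is radial, $\psi$ itself is nondecreasing in $|z|$. Combining everything with $\nabla_n(f)\asymp\max_{|\alpha|=n}|\partial^\alpha f|$ closes the induction. For the case $n=1$ of the inductive step---i.e., verifying that the base case is handled directly---no absorption is needed since the commutator vanishes.
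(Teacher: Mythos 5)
Your proof is correct and follows essentially the same route as the paper: induction on $n$ with the Ahern--Schneider lemma as the base case, a commutation identity to move $\mathcal R$ past the extra partial derivative (you use $\mathcal R\partial^\beta=\partial^\beta\mathcal R-(n-1)\partial^\beta$ where the paper uses $\mathcal R^{n-1}\partial_j=\partial_j(\mathcal R-I)^{n-1}$ and a binomial expansion), and the monotonicity $\|\mathcal R^{n-1}f\|_p\le\|\mathcal R^nf\|_p$ obtained from the integral representation $\mathcal R^{n-1}f=\int_0^1 t^{-1}\mathcal R^nf(t\cdot)\,dt$ to absorb the lower-order term. The only cosmetic difference is that you justify the monotonicity of $s\mapsto M_p(s,h)/s$ via radial subharmonicity of the $L^p(\sn)$-valued norm, whereas the paper reduces to $N=1$ by slices and compares with the shifted function $g(r\zeta)=\sum k^2\hat f(k)r^{k-1}\zeta^{k-1}$; both are valid.
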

\begin{proof} In the case $n=1$ this is just Lemma \ref{ahern}. Let $n\ge2.$ We have to prove that
\[\|\partial _{j_1}\partial _{j_2}\ldots \partial _{j_n}f\|_p\le C\|\mathcal R^nf\|_p,\]
where $1\le j_1,j_2,\dots, j_n\le N.$ By induction hypothesis, we
have
\[\|\partial _{j_1}\partial _{j_2}\ldots \partial _{j_n}f\|_p\le C\|\mathcal R^{n-1}\partial _{j_n}f\|_p. \]
On the other hand, it is easy to see (by induction) that
\begin{equation}\label{}
    \mathcal R^{n-1}\partial_ jf =\partial_j(\mathcal R-I)^{n-1}f,
\end{equation}
whence, by Lemma \ref{ahern},
\[\begin{aligned}\|\mathcal R^{n-1}\partial _{j_n}f\|_p&\le \sum_{k=0}^{n-1}\binom {n-1}k\|\partial_{j_n}
\mathcal R^{k}f\|_p\\&\le C \sum_{k=0}^{n-1}\binom {n-1}k\|\mathcal R^{k+1}f\|_p.
\end{aligned}\]
    Now we prove that
    \begin{equation}\label{r^2}\begin{aligned} \|\mathcal Rf\|_p\le \|\mathcal R^2f\|_p, \end{aligned} \end{equation}
    which will conclude the proof.
    First observe that \eqref{r^2} reduces to the case $N=1$ by using integration by slices. We also may assume
    that $f$ is a polynomial. Then
    $$
    \begin{aligned}
    \mathcal Rf(\zeta)=\int_0^1\frac1r \mathcal R^2f(r\zeta)\,dr, \quad |\zeta|=1.
     \end{aligned}
     $$
     Hence
     $$
     \begin{aligned}
     \|\mathcal Rf\|_p\le \int_0^1\frac 1rM_p(r,\mathcal R^2f)\,dr=\int_0^1M_p(r,g)\,dr,
      \end{aligned}
      $$
      where
      \[g(r\zeta)=\sum_{k=1}^\infty k^2\hat f(k) r^{k-1}\zeta^{k-1}.\]
      Since \[\begin{aligned} M_p(r,g)\le \|g\|_p=\|\mathcal R^2 f\|_p,   \end{aligned} \]
      the proof of \eqref{r^2} and of the lemma is completed.
          \end{proof}

For the characterizations of Besov-Lipschitz spaces that will be
given below we use tangential derivatives. For $1\leq i,j\leq N$
the tangential derivatives $T_{i,j}$ and $\overline{T}_{i,j}$ are
defined by
$$
T_{i,j}=\bar{z}_{i}\frac {\partial }{\partial z_j}-\bar{z}_j\frac
{\partial }{\partial z_i}\quad \text{and}\quad
\overline{T}_{i,j}=z_i\frac {\partial }{\partial
\bar{z}_j}-z_j\frac {\partial }{\partial \bar{z}_i}.
$$

We will consider operators $T=T_1\cdots T_k$ where the
$T_1,...,T_k$ are chosen among either the $T_{i,j}$ or the
$\overline{T}_{i,j}$, $1\leq i,j\leq N.$ We define by $\{T_{\delta
}^{+}\}$, $\delta \in C_k^{+}$ the collection of such operators
and define
$$
\nabla^{k}_{T^{+}}f(z)=\sum_{\delta \in C^{+}_k}|T^{+}_{\delta
}f(z)|.
$$

If $T_1,...,T_k$ are chosen among the $T_{i,j}$ (not the
$\overline{T}_{i,j}$'s ), the collection of such $T=T_1\cdots T_k$
will be denoted by $\{T_{\delta }\}$, $\delta \in C_k$ and the
complex-tangential gradient as
$$
\nabla^k_Tf(z)=\sum_{\delta \in C_k}|T_{\delta }f(z)|.
$$

We recall the definition of the non-isotropic weight of a
differential operator. We assign weight $1$ to $\mathcal R$ while
$T_{i,j}$ and $\overline{T}_{i,j}$ are given weight $1/2$ each. We
will consider differential operators
\begin{equation}
Xf=X_1\cdots X_kf,
\end{equation}
where each $X_j$ is $\mathcal R$ or the one of $T_{i,j}$ or
$\overline{T}_{i,j}.$ For such an operator its weight is defined
to be the sum of each weights of $X_j.$

For $z\in \bn$ and $\delta >0$ let $P(z,\delta )$ be the
non-isotropic polydisc defined as follows. If $z=r\xi $, $0\leq
r<1$, $\xi \in S_N$, pick $\eta_2,...,\eta_N$ so that $\{\xi
,\eta_2,...,\eta_N\}$ is an orthonormal basis of $\cn$. Then
$$
P(z,\delta )=\{w=r\xi +\lambda \xi
+\sum_{j=2}^N\lambda_j\eta_j,\quad |\lambda |<\delta ,
|\lambda_j|<\sqrt{\delta }, j=2,3,...,N \}
$$
The following is a weak version of Lemma 2.5([2]).

\begin{lem}
Let $X$ and $Y$ be the differential operators of the form $(4.4)$
with weight of $X$ being $m$. If $f\in H(\bn)$, then we have
$$
|XYf(z)|^p\leq \frac {C}{\delta^{N+1+mp}}\int_{P(z,\delta
)}|Yf(w)|^pdv(w),
$$
for $P(z,\delta )\subset \bn.$
\end{lem}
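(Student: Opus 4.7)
The estimate expresses a non-isotropic Cauchy-type inequality on $P(z,\delta)$: the complex-normal direction $\xi$ has polydisc radius $\delta$ (matching the weight $1$ assigned to $\mathcal R$) while each of the $N-1$ complex-tangential directions has radius $\sqrt\delta$ (matching the weight $\tfrac12$ assigned to $T_{i,j}$ and $\overline T_{i,j}$). A weight-$m$ operator should therefore gain $\delta^{-m}$ pointwise relative to the $L^p$-subharmonic mean-value bound, and combined with $\operatorname{vol} P(z,\delta)\asymp\delta^{N+1}$ this accounts for the denominator $\delta^{N+1+mp}$.

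My plan is to work in the adapted unitary coordinates already used in the definition of $P(z,\delta)$, namely $w=z+\lambda\xi+\sum_{k\ge2}\lambda_k\eta_k$, so that $P(z,\delta)$ becomes the standard product polydisc $\{|\lambda|<\delta,\ |\lambda_k|<\sqrt\delta\}$. A direct chain-rule computation at the base point $z=r\xi$ gives $\mathcal R|_z=r\,\partial_\lambda$ and $T_{i,j}|_z=r\sum_{k\ge2}\big[\bar\xi_i\overline{(\eta_k)_j}-\bar\xi_j\overline{(\eta_k)_i}\big]\partial_{\lambda_k}$, together with the conjugate formula for $\overline T_{i,j}|_z$ in terms of $\partial_{\bar\lambda_k}$; the crucial cancellation is that no $\partial_\lambda$ (resp. $\partial_{\bar\lambda}$) appears in the tangential operators at $z$. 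Hence $X$ at $z$ is a linear combination, with bounded coefficients, of mixed partial derivatives $\partial_\lambda^{a_1}\prod_{k\ge2}\partial_{\lambda_k}^{a_k}\partial_{\bar\lambda_k}^{b_k}$ of weighted order $a_1+\tfrac12\sum_{k\ge2}(a_k+b_k)\le m$. I would then invoke the standard non-isotropic Cauchy-subharmonic estimate: for $g$ holomorphic in $(\lambda,\lambda_2,\ldots,\lambda_N)$ on a neighbourhood of $P(z,\delta)$,
$$
\bigl|\partial_\lambda^{a_1}\partial_{\lambda_2}^{a_2}\cdots\partial_{\lambda_N}^{a_N}g(z)\bigr|^p\le\frac{C}{\delta^{N+1+p(a_1+(a_2+\cdots+a_N)/2)}}\int_{P(z,\delta)}|g|^p\,dv,
$$
obtained by iterating Cauchy in each holomorphic variable on a slightly shrunken polydisc and using the sub-mean-value property of $|g|^p$; the analogue for $\partial_{\bar\lambda_k}$-derivatives follows by symmetry in the antiholomorphic variables.

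The last step is to apply the above with $g=Yf$. Although $Yf$ is not itself holomorphic, its antiholomorphic dependence arises solely from the polynomial $\bar z$-factors that the tangential operators in $Y$ introduce (each $\overline T_{i,j}$ annihilates $f$ when acting directly on it). Writing $Yf=\sum_\gamma\bar z^\gamma h_\gamma$ with each $h_\gamma$ holomorphic, the action of $X$ commutes with this decomposition up to bounded coefficients, and each piece is controlled by the mixed holomorphic/antiholomorphic Cauchy-subharmonic estimate above. The main obstacle will be the reassembly: arranging things so that the right-hand side is $\int_{P(z,\delta)}|Yf|^p\,dv$ itself rather than the sum $\sum_\gamma\int|h_\gamma|^p\,dv$. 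This bookkeeping—exploiting polynomial boundedness of the $\bar z^\gamma$ on $\bn$ together with the specific structural form of the decomposition at the level of the polydisc—is the content of Lemma 2.5 of \cite{2}, of which the statement here is advertised as the weak version.
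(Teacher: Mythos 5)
First, a point of comparison: the paper does not actually prove this lemma. It is quoted as ``a weak version of Lemma 2.5'' of Ahern--Bruna \cite{2}, with no internal argument, so there is nothing in the paper to measure your proof against except the citation itself. Judged on its own terms, your outline gets the geometric framework right: the adapted coordinates in which $P(z,\delta)$ becomes a product polydisc of radii $\delta$ and $\sqrt\delta$; the observation that at the centre $z=r\xi$ the fields $T_{i,j}$ and $\overline T_{i,j}$ have no component along the normal derivative $\partial_\lambda$ (indeed the direction $\bar z_ie_j-\bar z_je_i$ is Hermitian-orthogonal to $\xi$); and the bookkeeping showing that expanding $X=X_1\cdots X_k$ at $z$ yields only monomial derivatives of weighted order at most $m$, since a derivative falling on a coefficient $\bar z_i$ only lowers the weighted order and $\mathcal R$ annihilates those coefficients. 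All of this is sound and accounts correctly for the factor $\delta^{-(N+1+mp)}$.

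The genuine gap is the final step, which you flag yourself and then defer back to Lemma 2.5 of \cite{2} --- the very result being proved. The function $g=Yf$ is polyanalytic of bounded degree but not holomorphic, and $|g|^p$ is in general not plurisubharmonic (already $g(w)=1-w\bar w$ violates the sub-mean-value inequality at the origin), so the ``Cauchy-subharmonic estimate'' cannot be applied to $Yf$ itself. Your decomposition $Yf=\sum_\gamma\bar z^\gamma h_\gamma$ therefore reduces the lemma to showing that $\sum_\gamma\int_{P(z,\delta)}|h_\gamma|^p\,dv\le C\int_{P(z,\delta)}|Yf|^p\,dv$ with $C$ independent of $\delta$ and $z$, and that reassembly is precisely the nontrivial content: it amounts to a uniform (under the anisotropic dilations) equivalence of norms on the space of polyanalytic functions of bounded degree over the polydisc. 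To make the argument self-contained you would need to prove this, for example by exhibiting kernels $K_{a,b}$, polynomial in $w$ and $\bar w$ and bounded on the unit polydisc, such that $\partial^a\bar\partial^bg(0)=\int gK_{a,b}\,dv$ for every polyanalytic $g$ of degree at most $d$, and then rescaling; as written, the proposal reproduces the reduction but not the proof.
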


As a corollary we have that
\begin{equation}
M_p(r,XYf)\leq \frac {C}{(1-r)^m}M_p(\bar{r},Yf),
\end{equation}
where $1/2<r<1$ and $\bar{r}=r+\frac {1-r}{4}.$

We will also need the following two lemmas.

\begin{lem}(\cite{2})
Let $\alpha >0$, $\beta >0$ and $1\leq p<\infty .$ Then we have
\begin{equation}
\int_0^1(1-r)^{\alpha -1}\biggl (\int_0^r(r-t)^{\beta
-1}F(t)dt\biggr )^pdr\leq C\int_0^1(1-r)^{\alpha +\beta
p-1}F(r)^pdr,
\end{equation}
for all $F\geq 0.$
\end{lem}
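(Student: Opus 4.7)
The plan is to prove this weighted $L^p$ inequality via Schur's test. Writing the desired inequality as the statement that
\[
TF(r)=\int_0^r (r-t)^{\beta-1}F(t)\,dt
\]
maps $L^p((1-t)^{\alpha+\beta p-1}\,dt)$ boundedly into $L^p((1-r)^{\alpha-1}\,dr)$, I first absorb the weights into the kernel: the substitution $F(t)=(1-t)^{-(\alpha+\beta p-1)/p}G(t)$ reduces the assertion to $L^p\to L^p$ boundedness of the operator with non-negative kernel
\[
\tilde K(r,t)=(r-t)^{\beta-1}(1-r)^{a}(1-t)^{b}\chi_{\{t<r\}},\qquad a=\tfrac{\alpha-1}{p},\quad b=-\tfrac{\alpha+\beta p-1}{p},
\]
where the crucial identity $a+b=-\beta$ will drive all the scaling that follows.

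I would then apply Schur's test with trial function $\phi(t)=(1-t)^{c}$, $c$ to be chosen. The second Schur condition $\int\tilde K(r,t)(1-r)^{cp}\,dr\le A(1-t)^{cp}$ is immediate from the Beta identity
\[
\int_t^1 (r-t)^{\beta-1}(1-r)^{a+cp}\,dr=B(\beta,a+cp+1)(1-t)^{\beta+a+cp},
\]
valid when $a+cp+1>0$; using $a+b=-\beta$, the exponent on the right reduces to exactly $cp$. The first Schur condition $\int\tilde K(r,t)(1-t)^{cp'}\,dt\le A(1-r)^{cp'}$ is handled by the substitution $u=(r-t)/(1-r)$, under which the inner integral becomes $(1-r)^{\beta+b+cp'}\int_0^{r/(1-r)}u^{\beta-1}(1+u)^{b+cp'}\,du$; the $u$-integral is uniformly bounded (converging at infinity as a Beta-type integral) precisely when $b+cp'<-\beta$, i.e.\ $cp'<a$, and then the exponents collapse to give the required $(1-r)^{cp'}$.

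Compatibility of the two Schur conditions forces $c$ to lie in the open interval $\bigl(-(\alpha+p-1)/p^2,\ (\alpha-1)(p-1)/p^2\bigr)$, whose length equals $\alpha/p>0$ by a short computation, and is therefore non-empty; any admissible $c$ yields the claim. The case $p=1$ is treated separately and is simpler: Fubini combined with the Beta identity reduces it to the pointwise bound $\int_t^1(r-t)^{\beta-1}(1-r)^{\alpha-1}\,dr=B(\alpha,\beta)(1-t)^{\alpha+\beta-1}$.

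The main technical obstacle is the first Schur condition: the strict inequality $cp'<a$ is essential, since at the borderline $cp'=a$ the auxiliary $u$-integral reduces to $\int u^{-1}\,du$ and diverges logarithmically, so one must choose $c$ strictly in the interior of the admissible interval. Every clean cancellation of exponents depends on the identity $a+b=-\beta$; once that is recognized, the remaining work is routine Schur-test bookkeeping for a weighted Riemann--Liouville kernel.
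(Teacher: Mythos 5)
The paper does not prove this lemma at all --- it is quoted verbatim from Ahern--Bruna \cite{2} --- so there is no internal proof to compare against; your task here was really to supply a proof, and the one you give is correct. I checked the details: with $a=(\alpha-1)/p$ and $b=-(\alpha+\beta p-1)/p$ one indeed has $a+b=-\beta$, the Beta-function identity gives the second Schur condition with exponent $\beta+a+b+cp=cp$ under $a+cp+1>0$, the substitution $u=(r-t)/(1-r)$ gives the first condition with exponent $a+\beta+b+cp'=cp'$ under $b+cp'<-\beta$ (equivalently $cp'<a$), and the admissible interval for $c$ has length exactly $\alpha/p>0$, so a valid $c$ exists for every $1<p<\infty$; the $p=1$ case is correctly dispatched by Fubini, where the inequality is in fact an identity with constant $B(\alpha,\beta)$. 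For comparison, the argument in \cite{2} (and the usual textbook route) proceeds instead by splitting the inner integral at $t=2r-1$, estimating the near part by $(1-r)^{\beta-1}\int_{2r-1}^r F$ and the far part by comparison with a Hardy-type average, and then invoking the classical weighted Hardy inequality; your Schur-test argument is more uniform (one mechanism covers all $1<p<\infty$ and makes the scaling $a+b=-\beta$ transparent), at the cost of being nonconstructive about the constant, while the splitting argument generalizes more readily to $0<p<1$, where Schur's test is unavailable. Since the lemma as stated only requires $1\le p<\infty$, your proof is complete as written.
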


\begin{lem}(\cite{5})
Let $m$ and $k$ be positive integers. If $f\in H(\bn)$, then for
$T^{+}_{\delta }f$, $\delta \in C^{+}_k$, and $1/2<r<1$, we have
$$
M_p(r,T^{+}_{\delta }f)\leq C\biggl
(\sup_{|z|<1/2}|f(z)|+\int_0^r(r-t)^{m-1}M_p(t,T^{+}_{\delta
}\mathcal R^mf)dt\biggr ).
$$
\end{lem}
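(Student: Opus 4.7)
The plan is to prove the lemma by Taylor expansion in the radial direction, combined with commutation identities between the radial derivative $\mathcal R$, its anti-holomorphic analogue, and the tangential operators.

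Fix $\zeta\in\sn$ and let $\phi(t)=T^{+}_{\delta}f(t\zeta)$ for $t\in[0,1)$. Since $f\in H(\bn)$ and $T^{+}_{\delta}$ is a differential operator with polynomial coefficients in $z,\bar z$, the map $\phi$ is smooth on $[0,1)$. Apply Taylor's formula with integral remainder around $t=1/4$:
$$\phi(r)=\sum_{j=0}^{m-1}\frac{\phi^{(j)}(1/4)}{j!}(r-1/4)^{j}+\frac{1}{(m-1)!}\int_{1/4}^{r}(r-t)^{m-1}\phi^{(m)}(t)\,dt.$$
Each $\phi^{(j)}(1/4)$ is a fixed combination of derivatives of $f$ evaluated at $\zeta/4\in\{|z|=1/4\}$; interior Cauchy estimates for holomorphic $f$ give $|\phi^{(j)}(1/4)|\le C\sup_{|z|<1/2}|f(z)|$ uniformly in $\zeta$. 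Since $(r-1/4)^{j}$ is bounded, the Taylor polynomial contributes at most $C\sup_{|z|<1/2}|f(z)|$ to $M_{p}(r,T^{+}_{\delta}f)$ after taking $L^{p}(d\sigma)$-norm.

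The heart of the argument is evaluating $\phi^{(m)}(t)=\partial_{t}^{m}T^{+}_{\delta}f(t\zeta)$. Using the Euler identity $t^{m}\partial_{t}^{m}=(t\partial_{t})(t\partial_{t}-1)\cdots(t\partial_{t}-m+1)$, matters reduce to iterating $t\partial_{t}$. For smooth $h$ on $\bn$, $t\partial_{t}h(t\zeta)=(\mathcal Rh+\tilde{\mathcal R}h)(t\zeta)$ with $\tilde{\mathcal R}=\sum_{j}\bar z_{j}\bar\partial_{j}$. Direct computation of commutators yields $\mathcal R T_{i,j}=T_{i,j}(\mathcal R-I)$ and $\mathcal R\overline T_{i,j}=\overline T_{i,j}(\mathcal R+I)$, hence $\mathcal R T^{+}_{\delta}=T^{+}_{\delta}(\mathcal R+c_{\delta})$ for some integer $c_{\delta}$ determined by the relative counts of $T_{i,j}$ and $\overline T_{i,j}$ factors. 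Since $f$ is holomorphic, $T^{+}_{\delta}f$ is a polynomial of fixed degree $-c_{\delta}$ in $\bar z$, so $\tilde{\mathcal R}(T^{+}_{\delta}f)=-c_{\delta}T^{+}_{\delta}f$. The two contributions cancel, giving $t\partial_{t}T^{+}_{\delta}f(t\zeta)=T^{+}_{\delta}\mathcal Rf(t\zeta)$; iterating (noting that $\mathcal Rf$ is again holomorphic) yields
$$\prod_{j=0}^{m-1}(t\partial_{t}-j)\,T^{+}_{\delta}f(t\zeta)=T^{+}_{\delta}\,\mathcal R(\mathcal R-I)\cdots(\mathcal R-(m-1)I)f(t\zeta).$$
Expanding $\mathcal R(\mathcal R-I)\cdots(\mathcal R-(m-1)I)=\mathcal R^{m}+\sum_{j=0}^{m-1}a_{j}\mathcal R^{j}$ produces
$$\phi^{(m)}(t)=\frac{1}{t^{m}}\Bigl(T^{+}_{\delta}\mathcal R^{m}f(t\zeta)+\sum_{j=0}^{m-1}a_{j}\,T^{+}_{\delta}\mathcal R^{j}f(t\zeta)\Bigr).$$

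Substituting into the Taylor remainder, taking $L^{p}(d\sigma)$-norm in $\zeta$ and using $1/t^{m}\le 4^{m}$ for $t\ge 1/4$, the leading term contributes a multiple of $\int_{1/4}^{r}(r-t)^{m-1}M_{p}(t,T^{+}_{\delta}\mathcal R^{m}f)\,dt$. The lower-order terms involving $T^{+}_{\delta}\mathcal R^{j}f$ with $j<m$ are handled by induction on $m$: applying the lemma with $m-j$ in place of $m$ and $\mathcal R^{j}f$ in place of $f$ bounds $M_{p}(t,T^{+}_{\delta}\mathcal R^{j}f)$ by $C\sup_{|z|<1/2}|f|+C\int_{0}^{t}(t-s)^{m-j-1}M_{p}(s,T^{+}_{\delta}\mathcal R^{m}f)\,ds$. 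Swapping the order of integration and using the Beta-function bound $\int_{s}^{r}(r-t)^{m-1}(t-s)^{m-j-1}\,dt=B(m,m-j)(r-s)^{2m-j-1}\le C(r-s)^{m-1}$ (valid since $r-s\le 1$ and $m-j\ge 1$) absorbs these contributions into a multiple of $\int_{0}^{r}(r-s)^{m-1}M_{p}(s,T^{+}_{\delta}\mathcal R^{m}f)\,ds$. Finally, the tail $\int_{0}^{1/4}$ needed to extend the integration range from $[1/4,r]$ to $[0,r]$ is bounded by $C\sup_{|z|<1/2}|f|$ via Cauchy estimates on $\{|z|\le 1/4\}$. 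The main obstacle is the commutator calculation: one must verify that the interplay between $\mathcal R$, $\tilde{\mathcal R}$, and arbitrary mixed products of $T_{i,j}$ and $\overline T_{i,j}$ really collapses to the clean identity $t\partial_{t}T^{+}_{\delta}f(t\zeta)=T^{+}_{\delta}\mathcal Rf(t\zeta)$, which relies crucially on the holomorphy of $f$ and on the antisymmetric structure of the tangential operators.
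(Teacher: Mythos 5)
The paper does not actually prove this lemma: it is imported verbatim from \cite{5}, so there is no internal proof to compare against. Your argument is, on its own terms, essentially a correct and self-contained proof, and its core computation checks out. With $\tilde{\mathcal R}=\sum_j\bar z_j\,\partial/\partial\bar z_j$ one indeed has $t\frac{d}{dt}h(t\zeta)=(\mathcal Rh+\tilde{\mathcal R}h)(t\zeta)$; the commutators $\mathcal RT_{i,j}=T_{i,j}(\mathcal R-I)$ and $\mathcal R\overline T_{i,j}=\overline T_{i,j}(\mathcal R+I)$ are correct; and since each factor $T_{i,j}$ raises and each $\overline T_{i,j}$ lowers the $\bar z$-degree by one, $T^{+}_{\delta}f$ is $\bar z$-homogeneous of degree $-c_\delta$ when $f$ is holomorphic, so the two corrections cancel and $t\partial_tT^{+}_{\delta}f(t\zeta)=T^{+}_{\delta}\mathcal Rf(t\zeta)$ as you claim. (A shortcut: $T_{i,j}$ and $\overline T_{i,j}$ commute with the real dilations $f\mapsto f(tz)$, and $t\frac{d}{dt}f(tz)=(\mathcal Rf)(tz)$ for holomorphic $f$, which gives the identity at once.) The Taylor remainder, Minkowski's integral inequality, the bound $t^{-m}\le 4^m$ on $[1/4,r]$, and the Beta-function absorption $\int_s^r(r-t)^{m-1}(t-s)^{m-j-1}\,dt\le C(r-s)^{m-1}$ are all in order. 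One small point: no ``tail $\int_0^{1/4}$'' needs to be added at the end, since the nonnegative integrand makes $\int_{1/4}^r$ trivially dominated by $\int_0^r$.

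The one genuine soft spot is the induction you use for the lower-order terms $T^{+}_{\delta}\mathcal R^jf$, $1\le j\le m-1$. Applying the inductive hypothesis to $\mathcal R^jf$ produces the constant term $\sup_{|z|<1/2}|\mathcal R^jf(z)|$, not $\sup_{|z|<1/2}|f(z)|$, and Cauchy estimates do not dominate the former by the latter on the \emph{same} ball; as written the induction does not close. The repair is routine: instead of invoking the lemma for $\mathcal R^jf$, iterate the identity $T^{+}_{\delta}\mathcal R^jf(t\zeta)=T^{+}_{\delta}\mathcal R^jf(\zeta/4)+\int_{1/4}^ts^{-1}T^{+}_{\delta}\mathcal R^{j+1}f(s\zeta)\,ds$ down to order $m$. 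All the resulting point evaluations then occur at radius $1/4$, where interior Cauchy estimates against $\sup_{|z|<1/2}|f|$ do apply, and Fubini converts the iterated integral into $C\int_{1/4}^t(t-s)^{m-j-1}|T^{+}_{\delta}\mathcal R^mf(s\zeta)|\,ds$, which your Beta-function step then absorbs. With that adjustment the proof is complete.
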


Now we are ready to give the characterization of Besov-Lipschitz
spaces that involve the tangential derivatives.

\begin{thm}
Suppose that $k>2\alpha $, $k$ is an integer. If $f\in H^p$ then
the following statements are equivalent:
\item{\rm (i)} \quad $f\in \Lambda^{p,q}_{\alpha }$;
\item{\rm (ii)}\quad $||\nabla^k_Tf||_{p,q,k/2-\alpha }<\infty $;
\item{\rm (iii)}\quad $||\nabla^k_{T^{+}}f||_{p,q,k/2-\alpha
}<\infty $.
\end{thm}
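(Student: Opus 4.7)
\emph{Proof plan.} I would prove the implications $(\mathrm{i})\Rightarrow(\mathrm{iii})\Rightarrow(\mathrm{ii})\Rightarrow(\mathrm{i})$. The implication $(\mathrm{iii})\Rightarrow(\mathrm{ii})$ is immediate from the pointwise bound $\nabla^k_T f\le \nabla^k_{T^+} f$.

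For $(\mathrm{i})\Rightarrow(\mathrm{iii})$, fix an integer $m>\alpha$. By Theorem \ref{besov} the hypothesis $f\in\Lambda^{p,q}_\alpha$ is equivalent to $\mathcal R^m f\in L^{p,q}_{m-\alpha}$. For each $\delta\in C_k^+$, the preceding lemma (the Ahern--Bruna-type formula stated just before the theorem) gives
$$
M_p(r,T^+_\delta f)\le C\sup_{|z|\le 1/2}|f(z)|+C\int_0^r(r-t)^{m-1}M_p(t,T^+_\delta \mathcal R^m f)\,dt.
$$
Since $T^+_\delta$ has non-isotropic weight $k/2$, inequality $(4.5)$ with $X=T^+_\delta$ and $Y=\mathcal R^m$ yields $M_p(t,T^+_\delta \mathcal R^m f)\le C(1-t)^{-k/2}M_p(\bar t,\mathcal R^m f)$. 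Substituting, raising to the $q$th power, and applying Hardy's inequality $(4.6)$ with $\alpha_H=q(k/2-\alpha)$ (positive precisely because of the standing hypothesis $k>2\alpha$) and $\beta=m$, one obtains
$$
\|T^+_\delta f\|_{p,q,k/2-\alpha}^q\le C+C\|\mathcal R^m f\|_{p,q,m-\alpha}^q<\infty.
$$
Summing over $\delta\in C_k^+$ then yields $(\mathrm{iii})$. The role of the hypothesis $k>2\alpha$ is exactly to make $\alpha_H$ admissible in Hardy's inequality.

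The main obstacle is $(\mathrm{ii})\Rightarrow(\mathrm{i})$, since here there is no analogue of Lemma~4.4 going in the opposite direction. My plan is to reduce everything to the dyadic decomposition characterization $f\in\mathcal B^{p,q}_{-\alpha}$ of Theorem \ref{besov} by proving a tangential Bernstein-type equivalence for the dyadic blocks,
$$
\|\nabla^k_T(W_\nu\ast f)\|_p\asymp 2^{\nu k/2}\|W_\nu\ast f\|_p\qquad (\nu\to\infty),
$$
which is the tangential counterpart of Lemma \ref{WR}. The upper bound follows from exactly the machinery used in $(\mathrm{i})\Rightarrow(\mathrm{iii})$. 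The lower bound is the genuinely hard piece; I would establish it by iterating the pointwise identity
$$
|z|^2\partial_i g=\bar z_i\mathcal R g+\sum_j z_j T_{ji}g,\qquad g\in H(\bn),
$$
$k$ times so as to express the $k$th-order holomorphic gradient $\nabla_k g$ on the shell $\{|z|>1/2\}$ as a bounded combination of the $T_\delta g$ ($\delta\in C_k$) modulo strictly lower-order terms in $\mathcal R$; combining with Lemma \ref{ahern2} (which exchanges $\nabla_k$ and $\mathcal R^k$ in $L^p$) then gives a matching bound for $\|\mathcal R^k(W_\nu\ast f)\|_p$, and one more application of Lemma \ref{WR} recovers $\|W_\nu\ast f\|_p$. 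Once the Bernstein equivalence is established, Theorem \ref{besov} produces
$$
\|\nabla^k_T f\|_{p,q,k/2-\alpha}\asymp\bigl\|\{2^{\nu\alpha}\|W_\nu\ast f\|_p\}\bigr\|_{\ell^q}\asymp\|f\|_{\Lambda^{p,q}_\alpha},
$$
closing the circle.
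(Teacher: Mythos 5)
Your treatment of $(\mathrm{iii})\Rightarrow(\mathrm{ii})$ and of $(\mathrm{i})\Rightarrow(\mathrm{iii})$ matches the paper: the latter is exactly the combination of Lemma 4.4, the pointwise/mean estimate $(4.5)$ for $T^+_\delta\mathcal R^m$, and Hardy's inequality $(4.6)$, with $k>2\alpha$ entering only to make the exponent $q(k/2-\alpha)$ positive. The order in which you apply $(4.5)$ and $(4.6)$ differs from the paper's but is immaterial.

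The gap is in $(\mathrm{ii})\Rightarrow(\mathrm{i})$. Your identity $|z|^2\partial_i g=\bar z_i\mathcal Rg+\sum_j z_jT_{ji}g$ is correct, but it cannot be iterated into a representation of $\nabla_k g$ as a combination of the $T_\delta g$, $\delta\in C_k$, \emph{modulo lower-order $\mathcal R$-terms}: the radial term $\bar z_i\mathcal R g$ it produces has full non-isotropic weight, and after $k$ iterations the term $\mathcal R^k g$ (weight $k$) survives and \emph{dominates} the tangential terms (weight $k/2$). Concretely, on a dyadic block $Q=W_\nu*f$ one has $\|\mathcal R^kQ\|_p\asymp 2^{\nu k}\|Q\|_p$ while $\|\nabla^k_TQ\|_p\lesssim 2^{\nu k/2}\|Q\|_p$ (your own upper bound), so the conclusion you want to draw, $\|\mathcal R^kQ\|_p\lesssim\|\nabla^k_TQ\|_p+o(2^{\nu k})\|Q\|_p$, is false. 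This is not a repairable bookkeeping slip: the operators $T_{ij}$ annihilate the radial direction pointwise, so no first-order pointwise identity can recover $\mathcal R$ from them at the correct weight. The paper's route is genuinely second order: it uses the algebraic identity $(4.7)$, $\sum_{j=0}^k d_j\mathcal R^{k-j}f=\sum_{\delta\in C_k}\overline T_\delta T_\delta f$, pairing each $T_\delta$ with its conjugate $\overline T_\delta$, and then the subharmonicity-type estimate $(4.8)$ (from Lemma 4.2), which trades the $\overline T_\delta$ factor (weight $k/2$) for $(1-r)^{-k/2}$ times an average of $|T_\delta f|$ over a shell. This converts hypothesis $(\mathrm{ii})$ into $\|\sum_j d_j\mathcal R^{k-j}f\|_{p,q,k-\alpha}<\infty$, which yields $f\in\Lambda^{p,q}_\alpha$ via Theorem 2.7 (or Lemma \ref{WR}). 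Your block-wise Bernstein equivalence $\|\nabla^k_T(W_\nu*f)\|_p\asymp 2^{\nu k/2}\|W_\nu*f\|_p$ may well be true and would indeed close the argument, but the lower bound in it needs the same $\overline T_\delta T_\delta$ device; as proposed, that step fails.
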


The equivalence $(i)\Longleftrightarrow (iii)$, for $0<\alpha <2$,
was considered in \cite{5,13}.

\begin{proof}
Obviously, $(iii)\Rightarrow (ii).$ Now we show that, for $1\leq
q<\infty $, $(i)\Rightarrow (iii).$ Let $f\in
\Lambda^{p,q}_{\alpha }.$ Assume that $m$ is a positive integer
such that $0<\alpha <m.$ By Theorem 2.1
$$
\int_0^1(1-r)^{q(m-\alpha )-1}M_p(r,\mathcal R^mf)^qdr<\infty .
$$

By using Lemma 4.4, $(4.6)$ and $(4.5)$ we find for $\delta \in
C_k^{+}$
$$
\begin{aligned}
&||T^{+}_{\delta }f||_{p,q,k/2-\alpha }\leq  C\biggl
(\sup_{|z|<1/2}|f(z)|\\
&+\biggl (\int_0^1(1-r)^{q(k/2-\alpha )-1}\biggl
(\int_0^r(r-t)^{m-1}M_p(t,T^{+}_{\delta }\mathcal R^mf)^qdt\biggr
)^qdr\biggr )^{1/q}\biggr )\\
&\leq C\biggl (\sup_{|z|<1/2}|f(z)|+\biggl
(\int_0^1(1-r)^{q(k/2+m-\alpha )-1}M_p(r,T^{+}_{\delta
}\mathcal R^mf)^qdr\biggr )^{1/q}\biggr )\\
&\leq C\biggl (\sup_{|z|<1/2}|f(z)|+\biggl
(\int_0^1(1-r)^{q(m-\alpha
)-1}M_p(r,\mathcal R^mf)^qdr\biggr )^{1/q}\biggr )\\
\end{aligned}
$$
From this it follows that $||\nabla^k_{T^{+}}f||_{p,q,k/2-\alpha
}<\infty .$

The implication $(i)\Rightarrow (iii)$ holds also for $q=\infty $.
Use Lemma 4.4 and $(4.5).$

$(ii)\Rightarrow (i)$

A simple calculation shows that there are constant $d_j$,
$j=0,1,...,k,$ such that
\begin{equation}
\sum_{j=0}^kd_j\mathcal R^{k-j}f=\sum_{\delta \in
C_k}\overline{T}_{\delta }T_{\delta }f.
\end{equation}
Here, if $T_{\delta }=T_1\cdots T_k$, where $T_m$, $1\leq m\leq
k$, are chosen among $T_{i,j}$'s, then $\overline{T}_{\delta
}=\overline{T}_1\cdots \overline{T}_{k}.$ By using Lemma 4.2 we
find that
\begin{equation}
M_p^p(r,\overline{T}_{\delta }T_{\delta })\leq \frac
{C}{(1-r)^{1+kp/2}}\int_{L_r}|T_{\delta }f(w)|^pdv(w),
\end{equation}
where
$$
L_r=\{z: \frac {1-r}{2}<1-|z|<2(1-r) \}.
$$
From  $(4.7)$ and $(4.8)$ we conclude that
$$
\int_0^1(1-r)^{q(k-\alpha )-1}M_p(r, \sum_{j=0}^kd_j\mathcal
R^{k-j})^qdr \leq C\int_0^1(1-r)^{q(k/2-\alpha
)-1}M_p(r,\nabla^k_Tf)^qdr.
$$
From this it follows that $f\in \Lambda^{p,q}_{\alpha }.$ From
$(4.8)$ it follows that if $||\nabla^k_Tf||_{p,\infty ,k/2-\alpha
}<\infty $, then $f\in \Lambda^p_{\alpha }.$

\end{proof}

\section{Moduli of smoothness and mean growth of derivatives}\label{moduli}

In this section we prove Theorem 1.5 and Theorem 1.6.

\textit{Proof of Theorem 1.5}

Assume that $f$ is holomorphic in a neighborhood of the closed
ball. For  fixed $r\in (0,1)$ and $\zeta\in\sn,$ let
$h(\theta)=f(re^{i\theta}\zeta).$ By induction,
$$
\begin{aligned}
\Delta^n_th(\theta)=\int_{[0,t]^n}h^{(n)}(\theta+x_1+\ldots+x_n)\,dx_1\ldots
x_n.
 \end{aligned}
 $$
 Hence, by using $(4.1)$ we get
 \[\begin{aligned} i^n\mathcal R^nf(re^{i\theta}\zeta)t^n&=h^{(n)}(\theta)t^n \\&=\Delta^n_th(\theta)-
 \int_{[0,t]^n} (h^{(n)}(\theta+x_1+\ldots+x_n)-h^{(n)}(\theta))\,dx_1\dots x_n.\end{aligned} \]
 This implies that
 \[\begin{aligned}
 |\mathcal R^nf(re^{i\theta}\zeta)|t^n&\le |\Delta^n_tf(re^{i\theta} \zeta)|\\&\quad+\int_{[0,t]^n}\sup_{0<y<nt}
 |h^{(n+1)}(\theta +y)|(x_1+\ldots+x_n)\,dx_1\dots dx_n\\&
 =|\Delta^n_tf(re^{i\theta} \zeta)| +\frac n2 t^{n+1}\sup_{0<y<nt}|\mathcal R^{n+1}f(re^{i(\theta +y)}\zeta)|.
  \end{aligned} \]
  Hence, by Minkowski's inequality and the complex maximal theorem,
   $$
   \begin{aligned}
   t^nM_p(r, \mathcal R^nf)\le \|\Delta^n_t f_r\|_p+C_0t^{n+1}M_p((3r+1)/4,\mathcal R^{n+1}f)
   \end{aligned}
   $$
   provided $0<t\le 1-r.$ (See Lemma 3.1 of \cite{16}.)
   From this, the inequality
   \[\begin{aligned}
   M_p((3r+1)/4,\mathcal R^{n+1}f)\le C_1(1-r)^{-1}
   M_p((1+r)/2,\mathcal R^{n}f),
   \end{aligned} \]
   and the inequality $\|\Delta_t^nf_r\|_p\le\|\Delta_t^nf\|_p,$
   it follows that
   \begin{equation}\label{multip}\begin{aligned}
   M_p(r, \mathcal R^nf)\le t^{-n}\|\Delta^n_t f\|_p+C_0C_1t(1-r)^{-1}M_p((1+r)/2,\mathcal R^{n}f).
   \end{aligned} \end{equation}
    Now let
    $q<\infty$ and
    $$
    \begin{aligned}
     A(r)=M_p^q(r,\mathcal R^nf)\psi(1-r).
     \end{aligned}
     $$
     Since $\psi(r/2)\ge (1/K)\psi(r),$  we have
     \[\begin{aligned}
     A((1+r)/2)&=M_p^q((1+r)/2,\mathcal R^nf)\psi((1-r)/2)\\&\ge
     (1/K)M_p^q((1+r)/2,\mathcal R^nf)\psi(1-r)
     \end{aligned} \]
     and therefore by using this and \eqref{multip}  we get
      \[\begin{aligned}
      A(r)\le C_2t^{-nq}\|\Delta^n_tf\|_p^q\psi (1-r)+C_3Kt^q(1-r)^{-q}A((1+r)/2)
       \end{aligned} \]
       for $0<t<1-r.$ Now let $m>0$ be the smallest integer such that
       $2^{-mq}KC_3\le 1/4$ and take $t=(1-r)/2^m.$ Then we have
        \[\begin{aligned}
        A(r)\le C_4(1-r)^{-nq}\phi(2^{-m}(1-r))\psi(1-r)+(1/4)A((1+r)/2),
        \end{aligned} \]
        where $\phi(t)=\|\Delta^n_tf\|_p^q.$
        Hence, by integration
        \[\begin{aligned}
        \int_0^1A(r)\,dr \le
        C_4\int_0^1t^{-nq}\phi(2^{-m}t)\psi(t)\,dt+(1/2)\int_{1/2}^1A(r)dr
        \end{aligned} \]
         and hence
         \[\begin{aligned}
         (1/2)\int_0^1A(r)\,dr&\le\int_0^1A(r)\,dr-(1/2)\int_{1/2}^1A(r)\,dr\\&
         \le C_4\int_0^1t^{-nq}\phi(2^{-m}t)\psi(t)\,dt\\&
         =C_5\int_0^{2^{-m}}t^{-nq}\phi(t)\psi(2^mt)\,dt\\&
         \le C_5K^m\int_0^1t^{-nq}\phi(t)\psi(t)\,dt.
          \end{aligned} \]
          This concludes the proof.

\begin{cor}
If $\alpha >-1$, then
\begin{equation}
\int_r^1M_p^q(\rho ,\mathcal R^nf)(1-\rho )^{\alpha }d\rho \leq
C\int_0^{1-r}\bigl [t^{-n}||\Delta^n_tf||_p\bigr ]^qt^{\alpha }dt
\end{equation}
$(0<r<1)$, where $C$ is independent of $r$ and $f$.
\end{cor}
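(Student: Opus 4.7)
The plan is to deduce the corollary as a direct application of Theorem 1.5 (Theorem~\ref{main}) with a suitably truncated weight $\psi$. Fix $r \in (0,1)$ and set
$$\psi(t) = t^\alpha \chi_{(0,\,1-r]}(t), \qquad 0 < t < 1,$$
where $\chi_E$ denotes the indicator function of $E$. The first step is to check that $\psi$ satisfies the hypotheses of Theorem 1.5. Integrability is immediate from $\alpha > -1$:
$$\int_0^1 \psi(t)\,dt = \int_0^{1-r} t^\alpha\,dt = \frac{(1-r)^{\alpha+1}}{\alpha+1} < \infty.$$
For the doubling condition on $0 < x < 1/2$: if $2x \le 1-r$ then $\psi(2x) = 2^\alpha \psi(x)$, while if $2x > 1-r$ then $\psi(2x) = 0 \le \psi(x)$ (whether or not $x \le 1-r$). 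Hence $\psi(2x) \le K\psi(x)$ holds with $K = \max(1, 2^\alpha)$, a constant depending only on $\alpha$ and in particular independent of $r$.

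The second step is to apply $(1.11)$ with this $\psi$ and identify each side of the resulting inequality. Since $\psi(1-\rho) = (1-\rho)^\alpha$ when $\rho \ge r$ and vanishes otherwise, the left-hand side of $(1.11)$ reduces to
$$\int_0^1 M_p^q(\rho,\mathcal R^n f)\,\psi(1-\rho)\,d\rho = \int_r^1 M_p^q(\rho,\mathcal R^n f)(1-\rho)^\alpha\,d\rho.$$
Similarly, $\psi(t) = t^\alpha$ on $(0, 1-r]$ and is zero beyond, so the right-hand side of $(1.11)$ becomes
$$C\int_0^1 \bigl[t^{-n}\|\Delta_t^n f\|_p\bigr]^q \psi(t)\,dt = C\int_0^{1-r}\bigl[t^{-n}\|\Delta_t^n f\|_p\bigr]^q t^\alpha\,dt.$$
Putting these together yields $(5.2)$. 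Because the doubling constant $K$ of $\psi$ does not depend on $r$, and the constant in Theorem 1.5 depends only on $K$, $p$, $q$, $n$, $N$, the resulting constant $C$ is independent of $r$ and $f$, as required.

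The main point requiring attention — really the only one — is that the doubling constant is uniform in $r$; this is what makes the truncation legitimate, and it is immediate from the observation that truncating $\psi$ on the right can only make $\psi(2x)$ smaller relative to $\psi(x)$. No other estimate is needed, since Theorem 1.5 already packages the essential work.
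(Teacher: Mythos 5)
Your proof is correct and coincides with the paper's own argument: the paper likewise fixes $r$, takes $\psi(x)=x^{\alpha}$ on $(0,1-r)$ and $\psi(x)=0$ on $(1-r,1)$, notes that the doubling constant ($K=2^{\alpha}$ there, your $\max(1,2^{\alpha})$ works equally well) is independent of $r$, and applies $(1.11)$. Nothing further is needed.
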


\begin{proof}
For a fixed $r$, $0<r<1$, we consider the function
$$
\psi (x)=x^{\alpha },\quad \text{if}\quad 0<x<1-r,\quad \text{and}
\quad \psi (x)=0, \quad \text{if}\quad 1-r<x<1.
$$
Then $\psi $ satisfies $(1.10)$ with $K=2^{\alpha }$ , and $K$ is
independent of $r$. Now $(5.2)$ follows from $(1.11).$
\end{proof}

\begin{cor}
If $\alpha >-1$, then
\begin{equation}
M_p(r,\mathcal R^nf)\leq C\biggl \{(1-r)^{-\alpha
-1}\int_0^{1-r}\bigl [t^{-n}||\Delta^n_tf||_p\bigr ]^qt^{\alpha
}dt\biggr \}^{1/q},
\end{equation}
where $C$ is independent of $r$ and $f$.
\end{cor}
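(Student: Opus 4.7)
The plan is to deduce Corollary 5.2 from Corollary 5.1 by exploiting the monotonicity of integral means of holomorphic functions and localizing the integral on the left-hand side of (5.2) to an interval of length comparable to $1-r$.

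First, I would recall that for any $g\in H(\bn)$ the map $\rho\mapsto M_p(\rho,g)$ is non-decreasing on $(0,1)$; applied to $g=\mathcal R^nf$, this gives $M_p(\rho,\mathcal R^nf)\ge M_p(r,\mathcal R^nf)$ whenever $\rho\ge r$. I would then bound the left-hand side of (5.2) from below by restricting the integration to the subinterval $[r,(1+r)/2]$:
$$
\int_r^1 M_p^q(\rho,\mathcal R^nf)(1-\rho)^{\alpha}\,d\rho
 \ge M_p^q(r,\mathcal R^nf)\int_r^{(1+r)/2}(1-\rho)^{\alpha}\,d\rho.
$$

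Next, since $\alpha>-1$, an elementary computation gives
$$
\int_r^{(1+r)/2}(1-\rho)^{\alpha}\,d\rho
 =\frac{1-2^{-(\alpha+1)}}{\alpha+1}\,(1-r)^{\alpha+1},
$$
which is a positive constant multiple of $(1-r)^{\alpha+1}$, with the constant depending only on $\alpha$. Combining the last two displays yields
$$
M_p^q(r,\mathcal R^nf)(1-r)^{\alpha+1}\le C\int_r^1 M_p^q(\rho,\mathcal R^nf)(1-\rho)^{\alpha}\,d\rho.
$$

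Finally, I would invoke Corollary 5.1 (inequality (5.2)) to dominate the right-hand side by $C\int_0^{1-r}[t^{-n}\|\Delta^n_tf\|_p]^q t^{\alpha}\,dt$, divide by $(1-r)^{\alpha+1}$, and take $q$-th roots to obtain (5.3). There is no real obstacle here; the only point requiring mild care is the choice $\rho\in[r,(1+r)/2]$, which simultaneously keeps $1-\rho\asymp 1-r$ (so that $(1-\rho)^{\alpha}\asymp(1-r)^{\alpha}$ regardless of the sign of $\alpha$) and keeps $\rho\ge r$ so that monotonicity applies. The hypothesis $\alpha>-1$ is used precisely to ensure that $\int_r^{(1+r)/2}(1-\rho)^{\alpha}\,d\rho$ is finite and of the correct order $(1-r)^{\alpha+1}$.
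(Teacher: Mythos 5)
Your proof is correct and follows essentially the same route as the paper: monotonicity of $\rho\mapsto M_p(\rho,\mathcal R^nf)$ to extract the factor $\int(1-\rho)^\alpha\,d\rho\asymp(1-r)^{\alpha+1}$, followed by Corollary 5.1. The only (immaterial) difference is that the paper integrates over the full interval $[r,1]$, where $\int_r^1(1-\rho)^\alpha\,d\rho=(1-r)^{\alpha+1}/(\alpha+1)$ directly, so the restriction to $[r,(1+r)/2]$ is not needed.
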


\begin{proof}
By the increasing property of $M_p(\rho ,\mathcal R^n),$
$$
M_p^q(r,\mathcal R^nf)\int_r^1(1-\rho )^{\alpha }d\rho \leq
\int_r^1M_p^q(\rho ,\mathcal R^n)(1-\rho )^{\alpha }d\rho,
$$
which, together with $(5.2)$ gives $(5.3)$.
\end{proof}

Having in mind Lemma 4.1, as a special case we have $(1.12)$.

 Theorem 1.6 is an easy consequence of Lemma 5.5 and Lemma 5.6
that we prove below.

\begin{lem}\label{}
For $f\in H(\bn)$ and $U\in \mathcal L(\bn)$ we have
$$
\begin{aligned} \mathcal R^n(f\circ U)(w)=(\mathcal R^nf)(Uw),
\quad w\in \bn.
 \end{aligned}
 $$
\end{lem}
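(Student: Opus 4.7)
The plan is to bypass any chain-rule computation and work directly with the homogeneous-expansion definition of $\mathcal R^n$. Recall that $\mathcal R^s g(z) = \sum_{k=1}^\infty k^s g_k(z)$, where $g = \sum_{k=0}^\infty g_k$ is the homogeneous expansion. The entire argument will consist of showing that the homogeneous expansion of $f\circ U$ is precisely $\sum_k (f_k\circ U)$ and then reading off the conclusion.

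First I will write $f=\sum_{k=0}^\infty f_k$ with $f_k$ homogeneous of degree $k$. The central observation is that since $U$ is \emph{linear}, the composition $f_k\circ U$ is again homogeneous of degree $k$: for any $\lambda\in\mathbb C$ and $w\in\bn$,
$$
(f_k\circ U)(\lambda w)=f_k(\lambda Uw)=\lambda^k f_k(Uw)=\lambda^k (f_k\circ U)(w).
$$
This is the only place where the linearity of $U$ enters.

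Next I will justify that $\sum_{k=0}^\infty (f_k\circ U)$ is indeed the homogeneous expansion of $f\circ U$. Because $U\in\mathcal L(\bn)$ maps $\bn$ into itself, the operator norm satisfies $\|U\|\le 1$, so for any compact $K\subset \bn$ the image $U(K)$ is a compact subset of $\bn$. The series $\sum_k f_k$ converges to $f$ uniformly on $U(K)$, hence $\sum_k (f_k\circ U)$ converges uniformly to $f\circ U$ on $K$. Combined with the uniqueness of the homogeneous expansion of a holomorphic function on $\bn$, this identifies the $k$-th homogeneous component of $f\circ U$ as $f_k\circ U$.

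With this in hand, the result is immediate: by the definition of $\mathcal R^n$,
$$
\mathcal R^n(f\circ U)(w)=\sum_{k=1}^\infty k^n (f_k\circ U)(w)=\sum_{k=1}^\infty k^n f_k(Uw)=(\mathcal R^n f)(Uw).
$$
There is no real obstacle in this argument; the only point that requires care is the degree-preservation under composition with $U$, which fails for nonlinear $U$ but is immediate here. As a byproduct, the identity holds for every $s\in\mathbb C$ with the same proof, not just integer $n$.
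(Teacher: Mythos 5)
Your proof is correct, but it takes a genuinely different route from the paper's. The paper reduces to the case $n=1$ and argues via the chain rule, using the representation $\mathcal Rf(w)=f'(w)w$ with $f'(w)$ viewed as a linear operator on $\cn$: linearity of $U$ gives $\mathcal R(f\circ U)(w)=(f\circ U)'(w)w=f'(Uw)Uw=(\mathcal Rf)(Uw)$, and the general case follows by iterating this identity. You instead work straight from the homogeneous-expansion definition of $\mathcal R^s$, observing that composition with the linear map $U$ preserves the degree of each homogeneous component, so that the $k$-th component of $f\circ U$ is $f_k\circ U$, after which the identity is read off termwise. Both arguments are sound; the chain-rule computation is shorter for $n=1$ but needs the induction step to reach general $n$, whereas your argument is uniform in the order and, as you note, yields $\mathcal R^s(f\circ U)=(\mathcal R^sf)\circ U$ for every $s\in\mathbb C$ at no extra cost --- the natural level of generality here, since the paper defines $\mathcal R^s$ spectrally through the homogeneous expansion. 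The one point that needed care, namely the locally uniform convergence of $\sum_k f_k\circ U$ so that uniqueness of the homogeneous expansion can be invoked, you handle correctly via $\|U\|\le 1$ for $U\in\mathcal L(\bn)$.
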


 \begin{proof}
 It is enough to consider the case $n=1.$
 We have
 $
 \mathcal Rf(w)=f'(w)w,$
 where $f'(w)$ is the derivative of $f$ at $w$ treated as a linear operator on $\cn$.
 Hence, by the chain rule,
 \[\begin{aligned} \mathcal R(f\circ U)(w)=(f\circ U)'(w)w=
 f'(Uw)Uw= (\mathcal Rf)(Uw).  \end{aligned} \]
   \end{proof}

  \begin{cor}\label{}
  If\/ $U\in \mathcal L(\bn)$, then
  \begin{equation}\label{dr}\begin{aligned}
  \mathcal R^k\Delta^m_U f= \Delta ^m_U\mathcal R^k f, \quad k,m\ge 1.
    \end{aligned} \end{equation}
   \end{cor}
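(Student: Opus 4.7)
The plan is to reduce \eqref{dr} to the case $m=1$ by induction on $m$, and then to deduce the case $m=1$ directly from Lemma~\ref{}. Linearity of $\mathcal{R}^k$ gives
\[
\mathcal{R}^k\Delta^1_Uf = \mathcal{R}^k(f\circ U) - \mathcal{R}^k f,
\]
and Lemma~5.5 (which is stated for arbitrary powers $\mathcal{R}^n$) yields $\mathcal{R}^k(f\circ U)(z) = (\mathcal{R}^k f)(Uz)$, i.e.\ $\mathcal{R}^k(f\circ U) = (\mathcal{R}^k f)\circ U$. Substituting back shows
\[
\mathcal{R}^k\Delta^1_U f = (\mathcal{R}^k f)\circ U - \mathcal{R}^k f = \Delta^1_U (\mathcal{R}^k f),
\]
which is exactly \eqref{dr} for $m=1$.

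For the inductive step, suppose \eqref{dr} holds for some $m-1\ge 1$. Then, using the defining recursion $\Delta^m_U = \Delta^1_U\circ\Delta^{m-1}_U$,
\[
\mathcal{R}^k\Delta^m_U f = \mathcal{R}^k\Delta^1_U(\Delta^{m-1}_U f) = \Delta^1_U\bigl(\mathcal{R}^k\Delta^{m-1}_U f\bigr) = \Delta^1_U\Delta^{m-1}_U \mathcal{R}^k f = \Delta^m_U \mathcal{R}^k f,
\]
where the second equality uses the $m=1$ case applied to the function $\Delta^{m-1}_U f$ and the third uses the induction hypothesis. This completes the induction.

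There is no real obstacle here: the only substantive ingredient is Lemma~5.5, which says that composition with $U\in\mathcal{L}(\bn)$ commutes with $\mathcal{R}^k$. Since $\Delta^1_U$ is just the linear combination $C_U - I$ of composition operators (with $C_U g = g\circ U$), it automatically commutes with $\mathcal{R}^k$, and the iterated version follows.
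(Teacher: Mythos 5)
Your argument is correct and is exactly the route the paper intends: the corollary is stated without proof as an immediate consequence of the preceding lemma ($\mathcal R^n(f\circ U)=(\mathcal R^nf)\circ U$), from which linearity gives the case $m=1$ and iteration of $\Delta^1_U$ gives general $m$. (Only a labeling quibble: the commutation lemma you invoke is the lemma immediately preceding the corollary, not the later Lemma~\ref{4.3}.)
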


\begin{lem}\label{4.3}
If $U\in \mathcal L(\bn)$ and $\|U-I\|<1-r,$ then for $f\in
H(B_N)$ we have
\begin{equation}\label{}\begin{aligned}
M_p(r,\Delta^n_Uf)\le C(1-r)^n M_p(r_n,\mathcal R^nf),
 \end{aligned} \end{equation}
 where
 $$
 \begin{aligned}
 r_n=1-\frac{1-r}{2^n}.
  \end{aligned}
  $$
  \end{lem}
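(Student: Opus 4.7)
I would proceed by induction on $n$, establishing the slightly stronger statement
\[
M_p(r,\Delta_U^n f)\le C_n \|U-I\|^n M_p(r,\mathcal R^n f)
\]
for every $U\in\mathcal L(\bn)$ and $f\in H(\bn)$. Lemma~\ref{4.3} follows at once from this: the hypothesis $\|U-I\|<1-r$ supplies the factor $(1-r)^n$, and $M_p(r,\mathcal R^n f)\le M_p(r_n,\mathcal R^n f)$ by the monotonicity of spherical means of holomorphic functions.

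For the base case $n=1$, every convex combination $U_s=(1-s)I+sU$ lies in $\mathcal L(\bn)$, so the segment $[z,Uz]$ stays inside $\{|w|\le r\}$ when $|z|=r$, and the fundamental theorem of calculus yields
\[
\Delta_U^1 f(z)=\int_0^1 \nabla f(U_s z)\cdot(U-I)z\,ds.
\]
Minkowski's inequality in $L^p(r\sn)$ together with $|(U-I)z|\le \|U-I\|r$ reduces the task to bounding $M_p(r,|\nabla f|\circ U_s)$. The decisive ingredient is the inequality
\[
M_p(r,h\circ V)\le M_p(r,h),
\]
valid for every linear contraction $V\in\mathcal L(\bn)$ and every $h\in H(\bn)$, a several-variable analogue of Littlewood's subordination for linear maps. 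I would prove it by writing $V=V_1\Sigma V_2$ via singular-value decomposition (with $V_1,V_2$ unitary and $\Sigma=\mathrm{diag}(\sigma_1,\ldots,\sigma_N)$, $\sigma_j\in[0,1]$), absorbing the unitary factors by changes of variable on $\sn$, parametrizing $\zeta=(t_j\tau_j)_{j=1}^N$ with $t_j\ge 0$, $\sum t_j^2=1$, $\tau_j\in\mathbb T$, and invoking the monotonicity in each coordinate of the polytorus mean $T(r_1,\ldots,r_N;h)=\int_{\mathbb T^N}|h(r_1\tau_1,\ldots,r_N\tau_N)|^p\,d\tau$, itself a consequence of the plurisubharmonicity of $|h|^p$. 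Applied componentwise to $h=\partial_j f$ and combined with Lemma~\ref{ahern} (Ahern--Schneider), this yields $M_p(r,\Delta_U^1 f)\le C\|U-I\|\,M_p(r,\mathcal R f)$, completing the base case.

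The inductive step is routine: write $\Delta_U^n f=\Delta_U^1(\Delta_U^{n-1}f)$, apply the base case with $g=\Delta_U^{n-1}f\in H(\bn)$, use the commutation $\mathcal R\Delta_U^{n-1}=\Delta_U^{n-1}\mathcal R$ recorded in equation~\eqref{dr}, and then apply the inductive hypothesis to $\mathcal R f$:
\[
M_p(r,\Delta_U^n f)\le C\|U-I\|\,M_p(r,\Delta_U^{n-1}\mathcal R f)\le C_n\|U-I\|^n M_p(r,\mathcal R^n f).
\]
The main obstacle throughout is the composition inequality $M_p(r,h\circ V)\le M_p(r,h)$ for linear contractions; once it is in hand, the remaining pieces---the integral representation for $\Delta_U^1 f$, Lemma~\ref{ahern}, the commutation~\eqref{dr}, and Minkowski's inequality---combine straightforwardly.
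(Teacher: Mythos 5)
Your proposal is correct, and it takes a genuinely different route from the paper. The paper's proof of the base case $n=1$ is pointwise: it applies the mean value theorem to $f(Ur\zeta)-f(r\zeta)$, dominates the resulting supremum of $|\nabla f|$ by the non-tangential maximal function $M_*(\nabla f)(\zeta)$, invokes the maximal theorem to get $M_p(r,\Delta^1_Uf)\le C(1-r)\|\nabla f\|_p$, and then uses a dilation trick ($f\mapsto f_\rho$ with $\rho=(1+t)/2$) to convert the $H^p$-norm on the right into a mean at radius $(1+t)/2$; it is exactly this dilation that forces the radius to drift from $r$ to $r_n$ through the induction. You instead write $\Delta^1_Uf$ as an integral of $\nabla f(U_sz)\cdot(U-I)z$ along the segment $U_s=(1-s)I+sU$ and control $M_p(r,\partial_jf\circ U_s)$ by the subordination inequality $M_p(r,h\circ V)\le M_p(r,h)$ for linear contractions $V$; your SVD-plus-plurisubharmonicity proof of that inequality is sound (and the restriction to \emph{linear} self-maps is essential --- the analogous statement fails for general holomorphic self-maps of $\bn$ fixing the origin when $N\ge2$). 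Both arguments then share the same skeleton: Ahern--Schneider to replace $\nabla$ by $\mathcal R$, the commutation \eqref{dr}, and induction. What your route buys is a cleaner and strictly stronger conclusion, $M_p(r,\Delta^n_Uf)\le C\|U-I\|^nM_p(r,\mathcal R^nf)$ with no radius drift, at the cost of proving the auxiliary composition lemma; the paper avoids that lemma but accepts the harmless passage from $r$ to $r_n$. One small caveat applies equally to both proofs: Lemma \ref{ahern} is stated only for $1/4<r<1$, so the range $r\le1/4$ needs a separate (elementary, compactness-type) remark.
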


  \begin{proof} (Induction on $n$.) Assume first that $f$ is holomorphic in a neighborhood
  of the closed ball. Let $\|U-I\|<\varepsilon$, where $\varepsilon$ is small enough.
   Then $|Ur\zeta-r\zeta|<\eps$ so, by the Lagrange theorem,
  $$
  \begin{aligned}
  |f(Ur\zeta)-f(r\zeta)|\le \sup_{|w-r\zeta|<\varepsilon,\ |w|<r }|\nabla f(w)|\varepsilon.
   \end{aligned}
   $$
   Choose $\eps=(1-r)/2.$ Since the inequalities $|w-r\zeta|<\eps$ and $|w|<r$ imply
   \[\begin{aligned}
   |w-\zeta|&\le |w-r\zeta|+(1-r)\\&\le (3/2)(1-r)\\&\le (3/2)(1-|w|),
   \end{aligned} \]
   we have that
   \[\begin{aligned}  |f(Ur\zeta)-f(r\zeta)|\le \frac{1-r}2M_*(\nabla f)(\zeta), \end{aligned} \]
   where $M_*$ is the non-tangential maximal function. Now the maximal theorem shows that
   $$
   \begin{aligned}
   M_p(r,\Delta^1_Uf)\le C(1-r)\|\nabla f\|_p.
    \end{aligned}
    $$
    If $f\in H(\bn)$ is arbitrary, then we apply this inequality to the function $f_\rho(z)=f(\rho z),$
    $0<\rho<1,$ to get
    $$
    \begin{aligned}
    M_p(r\rho,\Delta^1_Uf)\le C(1-r)M_p(\rho, \nabla f).
     \end{aligned}
     $$
     Now take $\rho=(t+1)/2$ and $r\rho=t$ for $0<t<1.$ This implies, via Lemma \ref{ahern},
     $$
     \begin{aligned}
     M_p(t,\Delta^1_Uf)&\le C(1-t)M_p((t+1)/2,\nabla f)\\&\le C(1-t)M_p((t+1)/2,\mathcal R f).
       \end{aligned}
    $$
       This proves the lemma for $n=1.$

       Let $n\ge 2.$ Then, by induction hypothesis, relation \eqref{dr}, and the case $n=1$,
       $$
       \begin{aligned}
       M_p(r,\Delta^n_Uf)&\le C(1-r)^{n-1}M_p(r_{n-1}, \mathcal R^{n-1}\Delta^1_Uf)\\&
       =C(1-r)^{n-1}M_p(r_{n-1},\Delta^1_U \mathcal R^{n-1}f)\\&
       \le C(1-r)^{n}M_p((1+r_{n-1})/2, \mathcal R^nf)\\&=
       C(1-r)^{n}M_p(r_n, \mathcal R^nf).
         \end{aligned}
         $$
         This completes the proof.
   \end{proof}

\begin{lem}\label{} If $f\in H^p(\bn)$, $U\in \mathcal L(\bn),$
and $\|U-I\|<1-r,$ $1/4<r<1,$ then
\begin{equation}\label{}\begin{aligned}
\|\Delta_U^n(f-f_r)\|_p\le C\int_r^1(1-s)^{n-1}M_p(s,\mathcal R^nf)\,ds.
 \end{aligned} \end{equation}
 \end{lem}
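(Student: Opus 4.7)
The plan is to derive a Taylor-type identity for $f-f_r$ in which $\mathcal R^n f$ appears as the integral remainder, apply $\Delta_U^n$, and then estimate the integral remainder by a trivial difference bound and the surviving boundary terms by Lemma~\ref{4.3}. Working in logarithmic coordinates $u=\log s$ and setting $h(u)=f(e^uz)$, induction gives $h^{(k)}(u)=\mathcal R^kf(e^uz)$, so Taylor's theorem with integral remainder on $[\log r,0]$ (equivalently, $n-1$ iterated integrations by parts applied to $f-f_r=\int_r^1(\mathcal Rf)(sz)\,ds/s$) yields
\begin{equation*}
f(z)-f(rz)=\sum_{k=1}^{n-1}\frac{(\log(1/r))^k}{k!}\mathcal R^kf(rz)+\int_r^1\frac{(\log(1/s))^{n-1}}{(n-1)!}\mathcal R^nf(sz)\,\frac{ds}{s}.
\end{equation*}
Since $U$ is linear, $\Delta_U^n$ commutes with every radial dilation; applying $\Delta_U^n$, taking $M_p(\tilde\rho,\cdot)$ via Minkowski's inequality, passing $\tilde\rho\to 1^-$, and using the comparison $\log(1/t)\asymp 1-t$ on $[1/4,1]$, the task reduces to bounding
\begin{equation*}
\sum_{k=1}^{n-1}(1-r)^k M_p(r,\Delta_U^n\mathcal R^kf)+\int_r^1(1-s)^{n-1}M_p(s,\Delta_U^n\mathcal R^nf)\,ds
\end{equation*}
by the right-hand side of the lemma.

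The integral remainder is immediately dominated by $2^n\int_r^1(1-s)^{n-1}M_p(s,\mathcal R^nf)\,ds$ via the trivial pointwise inequality $|\Delta_U^ng|\le 2^n\sup|g|$ with $g=\mathcal R^nf$. For each boundary term with $1\le k\le n-1$, I factor $\Delta_U^n=\Delta_U^k\,\Delta_U^{n-k}$ (costing a constant $2^k$) and apply Lemma~\ref{4.3} to the function $\mathcal R^kf$ with $n-k$ in place of $n$ — the hypothesis $\|U-I\|<1-r$ is unchanged — to obtain $M_p(r,\Delta_U^n\mathcal R^kf)\le C(1-r)^{n-k}M_p(r_{n-k},\mathcal R^nf)$, whence the $k$-th boundary term is at most $C(1-r)^nM_p(r_{n-k},\mathcal R^nf)$. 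Since $s\mapsto M_p(s,\mathcal R^nf)$ is nondecreasing and $\int_{r_{n-k}}^1(1-s)^{n-1}\,ds=((1-r)/2^{n-k})^n/n$, the integral $\int_r^1(1-s)^{n-1}M_p(s,\mathcal R^nf)\,ds$ already dominates $((1-r)/2^{n-k})^n/n\cdot M_p(r_{n-k},\mathcal R^nf)$, which finishes the estimate on the boundary terms.

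The main obstacle is the appearance of the lower-order $\mathcal R^kf$ boundary terms ($k<n$) in the Taylor expansion: the trivial difference bound cannot absorb them, since only $\mathcal R^nf$ is permitted on the right-hand side. Lemma~\ref{4.3} is the decisive instrument at this step, as it simultaneously trades the remaining $n-k$ differences $\Delta_U$ for $n-k$ factors of $(1-r)$ and raises $\mathcal R^k$ to $\mathcal R^n$, producing precisely the weight $(1-r)^n$ needed to match the integral $\int_r^1(1-s)^{n-1}\,ds$ against a monotonicity argument for $M_p(\cdot,\mathcal R^nf)$.
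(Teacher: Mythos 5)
Your proof is correct but follows a genuinely different route from the paper's. The paper starts from the first-order identity $f(\zeta)-f(r\zeta)=\int_r^1 s^{-1}\mathcal Rf(s\zeta)\,ds$, applies $\Delta_U^n$ under the integral sign, invokes Lemma \ref{4.3} (with the full $n$) on $\mathcal Rf$ at each radius $s\in(r,1)$ to reach $\mathcal R^{n+1}f$, and then descends back to $\mathcal R^nf$ via the Cauchy-type estimate $M_p(s_n,\mathcal R^{n+1}f)\le C(1-s)^{-1}M_p(s_{n+1},\mathcal R^nf)$ and a change of variable. You instead expand to order $n$ in the logarithmic radial variable so that $\mathcal R^nf$ appears directly in the integral remainder, and you spend Lemma \ref{4.3} (at order $n-k$) only on the finitely many boundary terms, absorbing them into the integral by the monotonicity of $M_p(\cdot,\mathcal R^nf)$ and the explicit computation $\int_{r_{n-k}}^1(1-s)^{n-1}ds=((1-r)/2^{n-k})^n/n$. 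Your version avoids the $(n+1)$-st derivative and the subsequent descent, and it applies Lemma \ref{4.3} only at the base radius $r$, where the hypothesis $\|U-I\|<1-r$ is exactly the one given (the paper applies that lemma at radii $s>r$, where $\|U-I\|<1-s$ is not guaranteed, so your variant is actually tidier on this point); the price is the $n$-term Taylor identity and the extra bookkeeping for the boundary terms.

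One justification should be tightened. The bound $M_p(s,\Delta_U^n\mathcal R^nf)\le 2^nM_p(s,\mathcal R^nf)$ does not follow from the ``trivial pointwise inequality'' $|\Delta_U^ng|\le 2^n\sup|g|$ unless $p=\infty$. For $p<\infty$ you need, after Minkowski's inequality, that $M_p(s,g\circ V)\le M_p(s,g)$ for each of the linear self-maps $V=U^j$ of the ball. This is true and standard (write $V=W_1DW_2$ with $W_1,W_2$ unitary and $D$ a diagonal contraction, and use that $w\mapsto\int_{\sn}|g(w_1\zeta_1,\dots,w_N\zeta_N)|^p\,d\sigma(\zeta)$ is plurisubharmonic and multi-circular, hence increasing in each $|w_j|$), and the same fact is needed for your factorization $M_p(r,\Delta_U^k\Delta_U^{n-k}\mathcal R^kf)\le 2^kM_p(r,\Delta_U^{n-k}\mathcal R^kf)$ on the boundary terms; but it is not a pointwise triviality and deserves a line of proof or a citation.
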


\begin{proof}
  From the identity
$$
\begin{aligned}
   f(\zeta)-f(r\zeta)=\int_r^1\frac 1s\mathcal Rf(s\zeta)\,ds
 \end{aligned}
 $$
    it follows that
 $$
 \begin{aligned}
      \|\Delta^n_U(f-f_r)\|_p&\le
   4\int_r^1M_p(s,\Delta^n_U\mathcal Rf)\,ds.
 \end{aligned}
 $$
       Hence, by Lemma \ref{4.3},
 $$
 \begin{aligned}
       \|\Delta^n_U(f-f_r)\|_p\le
    C\int_r^1(1-s)^{n}M_p(s_n,\mathcal R^{n+1}f)\,ds,
  \end{aligned}
  $$
        where
 $$
 \begin{aligned}
         s_n=1-\frac{1-s}{2^n}.
 \end{aligned}
 $$
         Now we use the familiar estimate
$$
\begin{aligned}
         M_p(r,\mathcal Rf)\le M_p(r,\nabla f)\le C(1-r)^{-1}M_p((1+r)/2,f)
\end{aligned}
$$
          to get
$$
\begin{aligned}
          M_p(s_n,\mathcal R^{n+1}f)\le C(1-s)^{-1}M_p(s_{n+1},\mathcal R^nf),
\end{aligned}
$$
            which gives
$$
\begin{aligned}
       \|\Delta^n_U(f-f_r)\|_p\le
    C\int_r^1(1-s)^{n-1}M_p(s_{n+1},\mathcal R^{n}f)\,ds,
\end{aligned}
$$
         Now the substitution $s_{n+1}=t$ completes the proof.
\end{proof}

\textit{Proof of Theorem 1.6}

In Corollary 2.5 it is shown that if
$$    \int_0^1
(1-r)^{n-1}M_p(r,\mathcal R^nf)\,dr\ <\infty,
$$
then $f\in H^p.$

By using Lemma 5.5 and Lemma 5.6, (relations $(5.5)$ and $(5.6)$),
we get the inequality :
$$
\begin{aligned}
\omega_n^+(\delta ,f)_p&=\sup_{\|U-I\|<\delta , U\in \mathcal
L}\|\Delta^n_Uf\|_p\\
&\leq \sup_{\|U-I\|<\delta , U\in \mathcal
L}\|\Delta^n_Uf_{1-\delta }\|_p + \sup_{\|U-I\|<\delta , U\in
\mathcal L}\|\Delta^n_U(f-f_{1-\delta })\|_p\\
&\leq C\int_{1-\delta}^1(1-r)^{n-1}M_p(r,\mathcal R^nf)dr\\
\end{aligned}
$$

\section{Remarks}

For $f\in L^1(\sigma )$ and $z\in \bn $ we define the Cauchy
integral
$$
Cf(z)=\int_{\sn }f(\xi )\frac {d\sigma (\xi )}{(1-<z,\xi >)^N}.
$$
In \cite{3} the authors obtained conditions on $f\in L^{1}(\sigma
)$ sufficient to imply that $Cf$ belongs to either the Besov space
$\Lambda^{p,p}_{\alpha }$ or the Hardy-Sobolev space $H^p_{\alpha
}$, where $1<p<\infty $ and $0<\alpha <\infty $. As a corollary of
our results we have  sufficient conditions different from those
given in \cite{3}.

\begin{thm}
Let $1<p<\infty $, $0<\alpha <\infty $ and suppose that $n$ is an
integer such that $0<\alpha <n.$ Then a sufficient condition that
$Cf\in \Lambda^{p,p}_{\alpha }$ is that $f\in L^p(\sigma )$ and
$$
\int_0^1\frac {||\Delta^n_tf||^p_pdt}{t^{1+\alpha p}}<\infty .
$$
A sufficient condition that $f\in H^p_n$ is that $f\in L^p(\sigma
)$ and  $||\Delta^n_tf||_p=\mathcal O(t^n)$.
\end{thm}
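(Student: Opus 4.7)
The strategy is to reduce both assertions to the already established equivalences of Theorems 1.2 and 1.4, applied to the holomorphic function $Cf$. Concretely, for the first statement it suffices to verify condition $(\Delta)$ of Theorem 1.2 with $q=p$, i.e.\ that
$$
\int_0^1\frac{\|\Delta^n_t(Cf)\|_p^p}{t^{1+\alpha p}}\,dt<\infty,
$$
and for the second statement it suffices to show $\|\Delta^n_t(Cf)\|_p=\mathcal O(t^n)$ and invoke Theorem 1.4 (via the $\omega_n^-$ version). In both cases I would pass the rotation difference through the Cauchy transform and then use the $L^p$-boundedness of $C$.

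The first ingredient is the classical fact that, for $1<p<\infty$, the Cauchy (equivalently Szegő) projection acts boundedly $C:L^p(\sigma)\to H^p(\bn)$, which already yields $Cf\in H^p$. The second, and central, ingredient is the rotation identity
$$
\Delta^n_t(Cf)=C(\Delta^n_tf),\qquad t\in\mathbb R,\ n\ge 1.
$$
This is a direct consequence of $C(f\circ U)=(Cf)\circ U$ for every unitary $U$, which in turn follows from the rotation invariance of $d\sigma$: substituting $\eta=U\xi$ in $C(f\circ U)(z)=\int_\sn f(U\xi)(1-\langle z,\xi\rangle)^{-N}\dsi(\xi)$ converts $\langle z,U^{-1}\eta\rangle$ into $\langle Uz,\eta\rangle$, giving $Cf(Uz)$. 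Specialising to $U=e^{it}I$ and iterating yields the displayed identity. Combined with the boundedness of $C$ on $L^p(\sigma)$, this produces the key pointwise-in-$t$ estimate
$$
\|\Delta^n_t(Cf)\|_p\le C_p\,\|\Delta^n_tf\|_p,\qquad t\in\mathbb R.
$$

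With this inequality in hand, both conclusions are immediate. For the first, raising to the $p$-th power and integrating against $dt/t^{1+\alpha p}$ gives
$$
\int_0^1\frac{\|\Delta^n_t(Cf)\|_p^p}{t^{1+\alpha p}}\,dt\le C_p^p\int_0^1\frac{\|\Delta^n_tf\|_p^p}{t^{1+\alpha p}}\,dt<\infty,
$$
so Theorem 1.2 forces $Cf\in\Lambda^{p,p}_\alpha$. For the second, the hypothesis $\|\Delta^n_tf\|_p=\mathcal O(t^n)$ gives $\|\Delta^n_t(Cf)\|_p=\mathcal O(t^n)$, and since $t\mapsto t^n$ is increasing this is the same as $\omega_n^-(\delta,Cf)_p=\mathcal O(\delta^n)$; Theorem 1.4 (using its $\omega_n^-$-variant) then yields $Cf\in H^p_n$. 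There is no real obstacle here: the only nontrivial input is the commutation $C(f\circ U)=(Cf)\circ U$, which is completely formal once rotation invariance of $\sigma$ is used, together with the standard $L^p$-boundedness of the Cauchy projection for $1<p<\infty$.
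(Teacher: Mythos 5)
Your proposal is correct and follows exactly the route the paper intends: the paper states this theorem without proof, presenting it as a corollary of Theorems 1.2 and 1.4, and your argument --- the commutation $C(f\circ U)=(Cf)\circ U$ from rotation invariance of $\sigma$, the $L^p$-boundedness of the Cauchy projection for $1<p<\infty$, and then conditions $(\Delta)$ and $(\omega^{-})$ of those theorems --- is precisely that derivation. (Note only that the paper's phrase ``$f\in H^p_n$'' in the second assertion is evidently a misprint for $Cf\in H^p_n$, which is how you correctly read it.)
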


\bibliographystyle{amsplain}

\end{document}